\documentclass[11pt]{amsart}

\usepackage[T1]{fontenc}
\usepackage{lmodern}
\usepackage{amsmath,amssymb,amsthm,mathtools}
\usepackage{enumitem}
\usepackage{aliascnt}
\usepackage{tikz}
\usetikzlibrary{arrows.meta,positioning,calc,fit,shapes.geometric,decorations.pathmorphing}
\usepackage{hyperref}
\usepackage[nameinlink,capitalize]{cleveref}

\hypersetup{
  colorlinks=true,
  linkcolor=blue,
  citecolor=blue,
  urlcolor=blue
}

\theoremstyle{plain}
\newtheorem{theorem}{Theorem}[section]

\newaliascnt{conditionaltheorem}{theorem}

\aliascntresetthe{conditionaltheorem}

\newaliascnt{conjecture}{theorem}
\newtheorem{conjecture}[conjecture]{Conjecture}
\aliascntresetthe{conjecture}

\newaliascnt{proposition}{theorem}
\newtheorem{proposition}[proposition]{Proposition}
\aliascntresetthe{proposition}

\newaliascnt{conditionalproposition}{theorem}

\aliascntresetthe{conditionalproposition}

\newaliascnt{lemma}{theorem}
\newtheorem{lemma}[lemma]{Lemma}
\aliascntresetthe{lemma}

\newaliascnt{corollary}{theorem}
\newtheorem{corollary}[corollary]{Corollary}
\aliascntresetthe{corollary}

\newaliascnt{conditionalcorollary}{theorem}

\aliascntresetthe{conditionalcorollary}

\theoremstyle{definition}
\newaliascnt{definition}{theorem}
\newtheorem{definition}[definition]{Definition}
\aliascntresetthe{definition}

\newaliascnt{assumption}{theorem}

\aliascntresetthe{assumption}

\newaliascnt{convention}{theorem}
\newtheorem{convention}[convention]{Convention}
\aliascntresetthe{convention}

\newaliascnt{example}{theorem}

\aliascntresetthe{example}

\newaliascnt{construction}{theorem}

\aliascntresetthe{construction}

\newaliascnt{problem}{theorem}
\newtheorem{problem}[problem]{Problem}
\aliascntresetthe{problem}

\theoremstyle{remark}
\newaliascnt{remark}{theorem}
\newtheorem{remark}[remark]{Remark}
\aliascntresetthe{remark}

\newaliascnt{observation}{theorem}
\newtheorem{observation}[observation]{Observation}
\aliascntresetthe{observation}

\crefname{theorem}{Theorem}{Theorems}
\Crefname{theorem}{Theorem}{Theorems}
\crefname{conditionaltheorem}{Conditional Theorem}{Conditional Theorems}
\Crefname{conditionaltheorem}{Conditional Theorem}{Conditional Theorems}
\crefname{conjecture}{Conjecture}{Conjectures}
\Crefname{conjecture}{Conjecture}{Conjectures}
\crefname{proposition}{Proposition}{Propositions}
\Crefname{proposition}{Proposition}{Propositions}
\crefname{conditionalproposition}{Conditional Proposition}{Conditional Propositions}
\Crefname{conditionalproposition}{Conditional Proposition}{Conditional Propositions}
\crefname{lemma}{Lemma}{Lemmas}
\Crefname{lemma}{Lemma}{Lemmas}
\crefname{corollary}{Corollary}{Corollaries}
\Crefname{corollary}{Corollary}{Corollaries}
\crefname{conditionalcorollary}{Conditional Corollary}{Conditional Corollaries}
\Crefname{conditionalcorollary}{Conditional Corollary}{Conditional Corollaries}
\crefname{assumption}{Assumption}{Assumptions}
\Crefname{assumption}{Assumption}{Assumptions}
\crefname{definition}{Definition}{Definitions}
\Crefname{definition}{Definition}{Definitions}
\crefname{convention}{Convention}{Conventions}
\Crefname{convention}{Convention}{Conventions}
\crefname{example}{Example}{Examples}
\Crefname{example}{Example}{Examples}
\crefname{construction}{Construction}{Constructions}
\Crefname{construction}{Construction}{Constructions}
\crefname{remark}{Remark}{Remarks}
\Crefname{remark}{Remark}{Remarks}
\crefname{observation}{Observation}{Observations}
\Crefname{observation}{Observation}{Observations}
\crefname{problem}{Problem}{Problems}
\Crefname{problem}{Problem}{Problems}

\newcommand{\R}{\mathbb{R}}
\newcommand{\Sph}{\mathbb{S}}

\newcommand{\Len}{\operatorname{Len}}
\newcommand{\Thi}{\operatorname{Thi}}
\newcommand{\Rop}{\operatorname{Rop}}

\newcommand{\rad}{\operatorname{rad}}
\newcommand{\diam}{\operatorname{diam}}
\newcommand{\MT}{\operatorname{MT}}

\newcommand{\crn}{\operatorname{cr}}

\newcommand{\calX}{\mathcal{X}}

\newcommand{\calD}{\mathcal{D}}
\newcommand{\calG}{\mathcal{G}}
\newcommand{\calH}{\mathcal{H}}

\newcommand{\eps}{\varepsilon}
\newcommand{\mirror}[1]{\overline{#1}}

\newcommand{\Sim}{\operatorname{Sim}}
\newcommand{\val}{\operatorname{val}}

\title[Finite Recognition of Knot Types]{Finite Recognition of Knot Types via Ropelength-Filtered Reidemeister Graphs}

\author{Makoto Ozawa}
\address{Department of Natural Sciences, Faculty of Arts and Sciences, Komazawa University, Tokyo, Japan}
\email{w3c@komazawa-u.ac.jp}

\date{July 2026}

\begin{document}

\begin{abstract}
This paper develops a finite-recognition framework for knot types in a
ropelength-filtered diagrammatic setting.  Thick-knot sublevel spaces are
organized by a projection-framed similarity quotient, on which projection
is well defined.  The lifted Reidemeister multigraph records
projection-fiber components and bounded-ropelength Reidemeister movies;
its monotone diagram image inside the classical \(S^2\)-Reidemeister
multigraph carries the recognition certificates, namely complete rooted
balls decorated by move types, multiplicities, and ambient valences, after
Barbensi--Celoria.

We prove that every finite Reidemeister submultigraph becomes visible in
the diagram-image filtration at a finite ropelength level, with a
universal computable bound \(A(n)\) in terms of crossing complexity.
Consequently every knot type has finite recognition length, up to
mirroring, unconditionally.  Vertex-coherent lifting holds exactly for
patterns admitting a face-consistent planar labeling, in particular for
all tree-shaped patterns and all separated cube systems.  On strict
sublevels, parametric smoothing with reach control and relative multijet
transversality yield unconditional component reconstruction, and the
right-relaxed formulation recovers all geometric merge scales exactly,
removing the former projection--Cerf tameness hypothesis.
\end{abstract}

\subjclass[2020]{Primary 57K10; Secondary 53A04, 55N31, 05C63}

\keywords{finite knot theory, Reidemeister graph, ropelength, ideal stratum, deformation persistence, finite recognition length, finite principle for knot theory, projection discriminant, Cerf theory, diagrammatic merge tree}

\maketitle

\section{Introduction}

This paper studies how finite Reidemeister data emerge inside bounded-
ropelength spaces of thick representatives.  The guiding object is a
projection-framed ropelength filtration, together with two associated
multigraphs.  The lifted multigraph
$\calG_{\Lambda,u}^{\operatorname{lift}}(K)$ records projection-fiber
components and bounded-ropelength Reidemeister movies.  Its diagram image
$\calH_{\Lambda,u}(K)$ is an increasing submultigraph of the classical
$S^2$-Reidemeister multigraph $G_S(K)$.

The separation between these two graphs resolves a basic tension.  Fiber
components are needed to recover geometric component persistence, but they
can merge as the ropelength bound increases, so the lifted graphs do not
form an increasing sequence of subgraphs.  Finite recognition, on the other
hand, requires a literal monotone filtration.  It is therefore defined in
$\calH_{\Lambda,u}(K)$, while $\calG^{\operatorname{lift}}$ is retained for
component and merge information.

Two further corrections are essential.  First, a fixed projection direction
$u$ is not compatible with quotienting the curve space by all rotations.  We
use instead a projection-framed quotient by similarities preserving $u$.
Second, the finite local theorem of Barbensi--Celoria concerns complete
radius balls in a locally finite Reidemeister multigraph.  An arbitrary
injective subgraph copy is too weak, and suppressing parallel edges discards
information used by their reconstruction.  We therefore work with full
rooted multigraph balls decorated by intrinsic move types and ambient
valences.  Diagram labels are forgotten when certificates are compared;
otherwise a single labeled diagram would already determine the knot type.

Barbensi and Celoria proved that $G_S(K)$ is a complete invariant up to
mirroring and that every vertex lies in a sufficiently large finite
characterizing ball \cite{BarbensiCeloria}.  The principal result of the
present paper is that every such finite ball has finite geometric visibility.
Indeed, each of its finitely many diagrams has a positive-thickness spatial
realization, and each of its finitely many edges has a standard spatial
Reidemeister movie.  The maximum ropelength over these finite choices is a
finite visibility bound.  This gives unconditional finite recognizability:
\[
  L_{\operatorname{char},u}(K)<\infty
\]
for every knot type $K$.

The visibility argument can be made effective, although the resulting
bound is extremely coarse.  For each $n$ one can enumerate the finitely many
spherical diagrams with at most $n$ crossings and their finitely many
Reidemeister moves, construct rational polygonal spatial models and standard
movies, and certify a common ropelength bound $A(n)$.  Hence a radius-$R$
certificate centered at a $c$-crossing diagram is visible by $A(c+2R)$.
What remains difficult is to obtain a small characterizing radius or a useful
geometric bound.  In contrast, the endpoint-coherence obstruction
can be localized exactly.  For each diagram in a finite pattern we choose one
smooth spatial lift.  Planar isotopies, a standard local spatial Reidemeister
movie, and convex interpolation of the height functions over the terminal
diagram then realize every specified edge between these fixed lifts, and
compactness of the finitely many movies supplies one common finite
ropelength level.  One combinatorial constraint survives: a movie whose
projection stays regular cannot change the planar isotopy class of the
projection, so the outer-face data must be chosen consistently along the
pattern.  This face-consistency is automatic for trees and for separated
cube systems, and it is precisely equivalent to the existence of a
vertex-coherent lift; it plays no role in the monotone diagram-image
filtration and hence none in recognizability.

The projection--Cerf viewpoint governs the lifted graph.  A generic
one-parameter family crosses the projection discriminant at cusp,
self-tangency, and triple-point walls, corresponding to $R1$, $R2$, and
$R3$.  The only genuine analytic obstruction in the former formulation was
that a perturbation of a path lying on the boundary $\Rop=\Lambda$ need not
remain in the same closed sublevel.  We avoid that false requirement.  On the
strict sublevel $\Rop<\Lambda$, compactness gives uniform slack, a relative
parametric smoothing lemma for positive-reach curves preserves that slack,
and relative
multijet transversality gives a projection-generic isotopy.  A path in the
closed level $\Rop\le\Lambda$ therefore becomes generic in every
$\Rop<\Lambda+\varepsilon$.  Exact component reconstruction holds on strict
sublevels, and the resulting right-relaxed graph recovers the original
closed-level merge scales without any tameness hypothesis.  The exact
point-set equality of components at an individual closed critical level is
neither needed nor asserted.

The main results are:
\begin{itemize}[leftmargin=*]
\item[\textbf{A.}] finite BC-characteristic rooted multigraph balls exist for
every knot type (\cref{thm:BC-finite-local});
\item[\textbf{B.}] every finite Reidemeister submultigraph is visible in
$\calH_{\Lambda,u}(K)$ at some finite level
(\cref{thm:finite-visibility});
\item[\textbf{C.}] a universal computable crossing-complexity bound
$A(n)$ controls the visibility of every diagram and move with at most $n$
crossings (\cref{thm:effective-visibility});
\item[\textbf{D.}] every knot type has finite recognition length,
unconditionally, up to mirroring
(\cref{thm:finite-recognizability});
\item[\textbf{E.}] a finite specified Reidemeister pattern admits a
vertex-coherent lift at one finite ropelength level if and only if it
admits a face-consistent planar labeling; in particular every tree-shaped
pattern does (\cref{thm:coherent-finite-lift});
\item[\textbf{F.}] strict-sublevel components are reconstructed by the
strict lifted graph, and the right-relaxed diagrammatic merge scale equals
the geometric merge scale unconditionally
(\cref{thm:strict-component-reconstruction,thm:agreement-merge-scales}).
\end{itemize}

\begin{figure}[t]
\centering
\begin{tikzpicture}[
  >=Latex,
  box/.style={draw, rounded corners, align=center, inner sep=3.5pt, font=\scriptsize},
  lab/.style={font=\tiny\itshape, align=center, fill=white, inner sep=1pt}
]
\node[box] (space) at (0,0)   {sublevel space\\$\calX_{\Lambda,u}(K)$};
\node[box] (lift)  at (4.4,0) {lifted multigraph\\$\calG_{\Lambda,u}^{\operatorname{lift}}(K)$};
\node[box] (image) at (8.8,0) {monotone image\\$\calH_{\Lambda,u}(K)$};
\draw[->] (space) -- node[lab, above=2pt] {projection--Cerf} (lift);
\draw[->] (lift)  -- node[lab, above=2pt] {forget fibers} (image);
\node[box] (merge) at (4.4,-2.1) {component persistence\\and merge scales};
\node[box] (cert)  at (8.8,-2.1) {saturated certificate $\mathbb B_R(D)$\\$\Rightarrow\ L_{\operatorname{char},u}(K)<\infty$};
\draw[->] (lift)  -- node[lab, right=2pt] {persistence} (merge);
\draw[->] (image) -- node[lab, right=2pt] {finite visibility} (cert);
\end{tikzpicture}
\caption{The two-level construction.  Fiber components and Reidemeister
movies live in the lifted multigraph, which governs component persistence
and merge scales; forgetting the fibers gives the monotone diagram image
$\calH_{\Lambda,u}(K)$, in which saturated certificates become visible and
the finite recognition length is defined.}
\label{fig:main-pipeline}
\end{figure}

\subsection{Organization}

\Cref{sec:ropelength} defines the projection-framed ropelength spaces.
\Cref{sec:classical} formulates the finite rooted-ball certificates from
Barbensi--Celoria.  \Cref{sec:projection} reviews projection walls.
\Cref{sec:growth-graphs} defines the lifted and diagram-image filtrations and
states component reconstruction.  \Cref{sec:growth-invariants} records basic
growth quantities.  \Cref{sec:finite-recognition} proves finite visibility, its effective
crossing-complexity form, and unconditional finite recognizability.  \Cref{sec:merge-trees} discusses
projection-framed merge persistence, and the final sections record examples
and quantitative problems.


\section{Projection-framed ropelength sublevel spaces}
\label{sec:ropelength}

We first fix the geometric space on which projection is defined.  This
point is logically prior to the Reidemeister construction: quotienting by all
rotations and then fixing a direction $u$ does not produce a well-defined
projection map.

\begin{definition}[Thickness and ropelength]
Let $\gamma\subset\mathbb R^3$ be a $C^{1,1}$ embedded closed curve.  Its
thickness $\Thi(\gamma)$ is its reach in the sense of Federer
\cite{Federer}.  For positive-thickness curves, define
\[
  \Rop(\gamma)=\frac{\Len(\gamma)}{\Thi(\gamma)}.
\]
For a knot type $K$, set
\[
  \Rop(K)=\inf\{\Rop(\gamma)\mid \gamma\text{ represents }K\}.
\]
\end{definition}

For the standard thickness formula and existence of ropelength minimizers,
see \cite{LitherlandSimonDurumericRawdon,CantarellaKusnerSullivan}.  In
particular, the ideal level used below is nonempty.

Fix $u\in\mathbb S^2$.  Let $\Sim_u^+(\mathbb R^3)$ be the group of
orientation-preserving Euclidean similarities whose rotational part fixes
$u$.  Thus translations, positive homotheties, and rotations about the
$u$-axis are allowed, but rotations changing the projection direction are
not.

\begin{definition}[Projection-framed ropelength sublevel spaces]
Let $\mathcal E(K)$ be the space of positive-thickness $C^{1,1}$ embedded
closed curves representing $K$, equipped with the strong $C^{1,1}$ topology
in local normal-graph charts.  Thus both the curve and its tangent vary in
$C^0$, and the Lipschitz seminorm of the tangent is controlled locally.  Define the closed and strict
sublevels
\[
  \calX_{\Lambda,u}(K)
  =
  \{\gamma\in\mathcal E(K)\mid \Rop(\gamma)\le\Lambda\}
  /\Sim_u^+(\mathbb R^3),
\]
\[
  \calX^{<}_{\Lambda,u}(K)
  =
  \{\gamma\in\mathcal E(K)\mid \Rop(\gamma)<\Lambda\}
  /\Sim_u^+(\mathbb R^3).
\]
An admissible isotopy is represented by a jointly $C^{1,1}$ map
$\Gamma:S^1\times[0,1]\to\mathbb R^3$ whose time slices are
positive-thickness $C^{1,1}$ embeddings of knot type $K$ and for which
$|\partial_s\Gamma|$ is bounded away from zero.  Piecewise jointly
$C^{1,1}$ families are allowed after a finite subdivision of the time
interval.  It is $\Lambda$-admissible if
$\Rop(\Gamma_t)\le\Lambda$ for all $t$, and it is strictly
$\Lambda$-admissible if
\[
  \sup_{t\in[0,1]}\Rop(\Gamma_t)<\Lambda.
\]
Endpoints are compared after applying elements of
$\Sim_u^+(\mathbb R^3)$.  The resulting equivalence classes are called the
closed and strict projection-framed admissible components, respectively.
\end{definition}

\begin{remark}[Why strictness is uniform]
The uniform inequality in a strictly admissible isotopy is the analytic
slack needed for smoothing and transversality.  It is automatic whenever
ropelength is continuous along the chosen family and every time slice lies
in the strict sublevel, but it is included in the definition so that no
continuity assertion about reach is hidden in the notation.
\end{remark}

Every class has a representative with $\Thi(\gamma)=1$, in which case
$\Len(\gamma)=\Rop(\gamma)$.  Thus one may perform estimates in the
thickness-one normalization without defining the topology by an exact
thickness slice.

\begin{lemma}[Well-defined projection]
\label{lem:projection-well-defined}
Orthogonal projection in direction $u$ defines a map from the regular locus
of $\calX_{\Lambda,u}(K)$ to diagrams on $S^2$ up to orientation-preserving
isotopy.  In contrast, this map does not descend to the quotient by all
orientation-preserving Euclidean isometries.
\end{lemma}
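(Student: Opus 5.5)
The plan has two parts: check invariance of the projected diagram under each generator of $\Sim_u^+(\mathbb R^3)$, then give an explicit example showing the map fails to be invariant under general rotations. Throughout, "regular locus" means the set of classes $[\gamma]$ whose orthogonal projection $\pi_u\colon\mathbb R^3\to u^\perp$ restricted to $\gamma$ is an immersion with only transverse double points. This is a generic condition in the sense of \cref{sec:projection}. We identify $u^\perp\cup\{\infty\}$ with $S^2$, with a fixed orientation induced by $u$, so that each regular $\gamma$ yields a diagram $D_u(\gamma)$. Over/under data at each crossing are read off from the $u$-coordinate.

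\textbf{Invariance.} Let $g=\lambda A x+b$ with $\lambda>0$, $A\in SO(3)$, $Au=u$, $b\in\mathbb R^3$. Since $A$ preserves $u$, it also preserves $u^\perp$ and acts there as a rotation, so $\pi_u\circ g=g'\circ\pi_u$ with $g'(y)=\lambda Ay+\pi_u(b)$. This $g'$ is an orientation-preserving similarity of the plane $u^\perp$. Since $\langle g(x),u\rangle=\lambda\langle x,u\rangle+\langle b,u\rangle$ with $\lambda>0$, heights are preserved up to a positive affine map, so crossing signs (over/under) are unchanged. Regularity is also preserved, because $g'$ is a diffeomorphism. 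Finally, $g'$ is isotopic to the identity through orientation-preserving similarities: use $\lambda^t$, a path in $SO(2)$, and $tb$. Hence $D_u(g\gamma)$ and $D_u(\gamma)$ are isotopic on $S^2$ by an orientation-preserving isotopy, and $D_u$ descends to the regular part of $\calX_{\Lambda,u}(K)$.

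The only subtlety is that the regular locus must itself be $\Sim_u^+$-invariant; this follows from the same computation. One should also note that the answer does not depend on the choice of representative in the $C^{1,1}$ topology. This holds because the map is defined pointwise on curves, so no continuity is needed at this stage.

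\textbf{Failure for all rotations.} The main work is exhibiting a curve $\gamma$ and a rotation $A\in SO(3)$ such that $A\gamma$ and $\gamma$ are both regular but have non-isotopic projections. Take any positive-thickness representative of a nontrivial $K$ whose projection in direction $u$ has $c$ crossings. Rotating by a generic $A$ sending $u$ to a direction $v$ for which $\gamma$ has a different crossing number gives $D_u(A\gamma)=D_v(\gamma)$ up to orientation, which has a different number of crossings. Such a $v$ always exists. For example, add a small thickness-preserving kink visible as an R1 curl from $u$ but not from some $v$. Alternatively, use the fact that the crossing number of projections is a nonconstant function of direction for any knot: near a direction tangent to a chord between two points of $\gamma$ the count jumps. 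A cleaner choice is to take $A$ to be a rotation by $\pi$ about an axis perpendicular to $u$, which maps $u\mapsto -u$. This reverses all over/under data and also reverses the orientation of the projection plane. If $K$ is chiral, the resulting diagram represents the mirror image and cannot be isotopic to $D_u(\gamma)$ on $S^2$. If $K$ is achiral, use the crossing-number variation instead. In either case the projection is not constant on $SO(3)$-orbits, so it does not descend. I expect this second step to be the only real obstacle, and its difficulty lies in choosing a clean explicit witness rather than in any deep argument.
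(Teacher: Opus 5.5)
Your invariance half is correct and is the paper's argument written out in detail. An element of $\Sim_u^+(\mathbb R^3)$ intertwines $p_u$ with an orientation-preserving similarity of $u^\perp$ and preserves the height order along $u$, so it changes a regular diagram only by an orientation-preserving isotopy of $S^2$.

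The non-descent half contains a step that fails. Your ``cleaner choice'' claims that a rotation $A$ by $\pi$ about an axis perpendicular to $u$ gives a diagram of $\mirror{K}$ when $K$ is chiral. This is false. $A$ is an orientation-preserving isometry, so $A\gamma$ still represents $K$ and $D_u(A\gamma)$ is a diagram of $K$. The two reversals you list, of all over/under data and of the plane orientation, each mirror the knot type, so together they cancel: $D_u(A\gamma)$ is just the back view of $D_u(\gamma)$. It need not even be a different diagram. Take the standard three-crossing trefoil diagram, with the axis of $A$ along a symmetry axis of its shadow. The reflection sends the alternating crossing assignment to the opposite alternating one, and the global switch restores it, so $D_u(A\gamma)=D_u(\gamma)$. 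Chirality therefore supplies no witness, and the chiral/achiral case split should be discarded.

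The crossing-number route is the right one, but it needs three repairs.
\begin{enumerate}
\item Since $p_u(Ax)=A\,p_{A^{-1}u}(x)$, $D_u(A\gamma)$ is identified with $D_{A^{-1}u}(\gamma)$. You need $A$ with $Av=u$, not $Au=v$.
\item The count does not jump near chord directions as such; every crossing direction is a chord direction. It changes only when the direction crosses the tangent indicatrix, where an $R1$ curl is born or dies, or an $R2$ self-tangency locus. A clean witness: take a smooth representative and a point of nonzero curvature and torsion whose tangent direction is otherwise a regular projection direction. In the local model $(s,s^2/2,s^3/6)$ projected along $(1,a,b)$, a small loop exists exactly when $b>a^2/2$. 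So regular directions on the two sides of the indicatrix give diagrams differing by one $R1$ move, hence with different crossing numbers.
\item ``Nonconstant for any knot'' is too strong, and the witness must satisfy $\Rop\le\Lambda$. A round circle has only crossingless regular projections, so for the unknot at $\Lambda=2\pi$ every regular point of an orbit projects to the same diagram. Your kink trick also costs ropelength.
\end{enumerate}
The lemma's second sentence is thus an existence statement, and that is how the paper treats it: a rotation moving $u$ \emph{can} change the diagram. State your witness at that level, for instance for $\Lambda>\Rop(K)$. There, \cref{lem:parametric-smoothing} and a small perturbation inside the strict sublevel supply the generic smooth representative you need.
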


\begin{proof}
An element of $\Sim_u^+(\mathbb R^3)$ induces an orientation-preserving
similarity of the projection plane $u^\perp$; after one-point
compactification this changes a diagram only by an orientation-preserving
isotopy of $S^2$.  A rotation carrying $u$ to another direction can change
the projected diagram, so the quotient by all rotations does not admit a
fixed-$u$ projection map.
\end{proof}

\begin{definition}[Projection-framed ideal stratum]
The projection-framed ideal stratum is
\[
  I_u(K)=\calX_{\Rop(K),u}(K).
\]
Its path components are called ideal projection-framed components.
\end{definition}

The quotient used in the ideal-stratum theory of \cite{OzawaIdeal} forgets
the projection frame by allowing all rotations.  The present space is a
framed refinement adapted to a fixed projection.  All component and merge
statements in this paper refer to $\calX_{\Lambda,u}(K)$ unless explicitly
stated otherwise; no identification with the unframed rotation quotient is
silently assumed.

\begin{proposition}[Independence of the chosen direction]
\label{prop:direction-independence}
For $u,v\in\mathbb S^2$ and a rotation $A$ with $Au=v$, the assignment
$[\gamma]\mapsto[A\gamma]$ gives a ropelength-preserving homeomorphism
\[
  \calX_{\Lambda,u}(K)\cong\calX_{\Lambda,v}(K)
\]
that intertwines the corresponding diagrammatic constructions, up to
identifying the two projection spheres by an orientation-preserving
isometry.
Consequently, all isomorphism types and numerical invariants defined below
are independent of $u$, although the subscript is retained to display the
projection frame.
\end{proposition}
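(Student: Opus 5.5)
The plan is to check, in order, that the map $\gamma\mapsto A\gamma$ is well defined on the quotients, that it is a homeomorphism, that it preserves ropelength, and that it intertwines the diagrammatic constructions.

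\emph{Step 1: descent to the quotients.} The key identity is conjugation:
\[
  A\,\Sim_u^+(\mathbb R^3)\,A^{-1}=\Sim_v^+(\mathbb R^3).
\]
To see it, write $g\in\Sim_u^+$ as $g(x)=\lambda Rx+b$ with $\lambda>0$ and $Ru=u$. Then
\[
  AgA^{-1}(x)=\lambda(ARA^{-1})x+Ab ,
\]
and $ARA^{-1}$ is an orientation-preserving rotation fixing $Au=v$. Applying the same computation to $A^{-1}$ gives the reverse inclusion. Hence if $\gamma'=g\gamma$, then $A\gamma'=(AgA^{-1})A\gamma$, so $[\gamma]\mapsto[A\gamma]$ is well defined. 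The class $[A^{-1}\delta]$ gives a two-sided inverse.

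\emph{Step 2: ropelength and topology.} The map $A$ is an isometry of $\mathbb R^3$, so it preserves length and reach, hence $\Rop$, and it preserves knot type since $A\in SO(3)$. Thus $\gamma\mapsto A\gamma$ maps $\mathcal E(K)$ to itself and respects each sublevel, closed and strict. On $\mathcal E(K)$ it is a homeomorphism for the strong $C^{1,1}$ topology. The reason is that the normal-graph chart of $\gamma$ is carried by $A$ to the normal-graph chart of $A\gamma$, with the graph functions rotated linearly. This preserves $C^0$ norms of position and tangent and Lipschitz seminorms of tangents, because $A$ is a linear isometry. Since the action is equivariant by Step 1, the map descends to a homeomorphism of the quotient topologies, whose inverse is induced by $A^{-1}$. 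Admissible isotopies $\Gamma$ go to $A\Gamma$, which remain jointly $C^{1,1}$ with the same speed bounds and ropelength profile. So closed and strict admissible components correspond, and any later notions defined via admissible paths are carried along.

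\emph{Step 3: intertwining the diagrams.} The map $A$ restricts to an orientation-preserving isometry $A|:u^\perp\to v^\perp$, where the planes are oriented by $u$ and $v$ respectively. Orientation is preserved because $A$ preserves the triple orientation and sends $u$ to $v$. We then have
\[
  \pi_v(A\gamma)=A|\,\pi_u(\gamma),
\]
and heights satisfy $\langle A x,v\rangle=\langle x,u\rangle$, so crossing information agrees. The regular locus therefore corresponds, and after one-point compactification the diagram of $A\gamma$ is the image of the diagram of $\gamma$ under an orientation-preserving isometry of projection spheres. This matches the convention of \cref{lem:projection-well-defined}. Moreover, projection-walls crossed by a path correspond to the same walls, R1, R2 and R3, crossed by the rotated path.

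\emph{Step 4: consequence.} Every later construction depends only on the sublevel spaces, admissible paths, and projected diagrams up to orientation-preserving isotopy. It follows that isomorphism types and numerical invariants agree.

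The main obstacle I expect is making the intertwining claim precise before those constructions are defined. I would handle this by stating Step 3 as naturality at the level of the primitive data: points, admissible paths, and diagrams with crossing data. I would then remark that each later definition is built functorially from these primitives. Equivariance is then inherited formally, provided no later definition refers to $u$ other than through $\Sim_u^+$ and $\pi_u$, and that condition should be checked where each one is introduced.
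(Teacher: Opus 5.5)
Your proposal is correct and follows the same route as the paper. Conjugation by $A$ carries $\Sim_u^+(\mathbb R^3)$ onto $\Sim_v^+(\mathbb R^3)$, and $A$ preserves length, thickness and ropelength. Since $p_v\circ A=A|_{u^\perp}\circ p_u$, projections correspond under an orientation-preserving isometry of projection planes, and the inverse is induced by $A^{-1}$. You spell out details the paper leaves implicit (descent to the quotient topologies, transport of admissible isotopies, and the height identity $\langle Ax,v\rangle=\langle x,u\rangle$ preserving crossing data), but the argument is the same.
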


\begin{proof}
Conjugation by $A$ carries $\Sim_u^+(\mathbb R^3)$ to
$\Sim_v^+(\mathbb R^3)$ and preserves length, thickness, and ropelength.
Moreover, projection of $A\gamma$ in direction $v$ is identified with
projection of $\gamma$ in direction $u$ by the induced orientation-
preserving isometry of projection planes.  The inverse is induced by
$A^{-1}$.
\end{proof}

\subsection{Finite-scale normalization}

Set
\[
  \calX_{<\infty,u}(K)=\bigcup_{\Lambda<\infty}\calX_{\Lambda,u}(K).
\]
For a tame knot type this space is nonempty: a polygonal representative can
be smoothed to a positive-thickness $C^{1,1}$ representative and hence has
finite ropelength.  This elementary fact is only a normalization.  The
substantive finite information below is the finite scale at which a complete
rooted Reidemeister-ball certificate becomes visible.


\section{Classical Reidemeister multigraphs and finite certificates}
\label{sec:classical}

Let $K$ be an unoriented knot type in $S^3$, and let $\calD_S(K)$ be the
set of its diagrams on $S^2$, considered up to orientation-preserving
isotopy of $S^2$.

\begin{definition}[$S^2$-Reidemeister multigraph]
The $S^2$-Reidemeister multigraph $G_S(K)$ has vertex set $\calD_S(K)$.
Its edges are equivalence classes of single Reidemeister moves, where moves
coinciding under diagram isotopy define the same edge.  Distinct moves with
the same endpoints remain distinct parallel edges.
\end{definition}

Retaining parallel edges is essential: the local reconstruction in
Barbensi--Celoria uses edge multiplicities and valences.  The multigraph is
connected and locally finite \cite{BarbensiCeloria}.

\begin{theorem}[Barbensi--Celoria \cite{BarbensiCeloria}]
\label{thm:BC-complete}
The isomorphism type of $G_S(K)$ determines $K$ up to mirroring.
\end{theorem}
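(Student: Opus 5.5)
This statement is cited from Barbensi--Celoria rather than proved from the earlier material here, so the plan is to reconstruct the outline of their argument and check that it is compatible with the multigraph conventions of this section.

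\emph{Step 1: recognize a canonical subset of vertices.} The first step is to identify, purely from the abstract isomorphism type of $G_S(K)$, the vertices that are minimal-crossing diagrams. The tool is local valence data. An R1 move can be applied at every edge of the diagram, in several ways, so the valence of a diagram $D$ in $G_S(K)$ grows in a controlled way with the crossing number $c(D)$. We read this valence together with the multiplicities of the parallel edges incident to $D$. Comparing valences along paths lets us recover the crossing number function $c$ up to the structure of the graph. The key sub-claim is that $c(D)$ is determined by a finite rooted ball around $D$. The plan is to recover it from the number of R1 edges, which is linear in the number of edges $2c$ of the underlying $4$-valent graph. The R1 edges themselves are recognized intrinsically as edges whose endpoints differ in valence by a fixed increment pattern.

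\emph{Step 2: recover the diagram.} Once crossing numbers are readable, we reconstruct the diagram $D$ itself, up to mirror and orientation of $S^2$, from the rooted multigraph ball around $D$. For example, the pattern of R1 and R2 neighbors encodes the faces and their adjacencies in the projection. A concrete plan is to show that the rooted ball of radius $2$, with edge multiplicities, determines the planar $4$-valent graph together with its crossing information, up to a global mirror. This uses the fact that parallel R2 moves between the same pair of diagrams correspond to symmetric face pairs.

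\emph{Step 3: conclude.} Since a single diagram determines $K$, any graph isomorphism $G_S(K)\cong G_S(K')$ carries a diagram of $K$ to a diagram whose reconstructed diagram is $D$ or its mirror. Hence $K'=K$ or $K'=\mirror{K}$. The mirror ambiguity is unavoidable because mirroring all diagrams induces an isomorphism $G_S(K)\cong G_S(\mirror{K})$.

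\emph{Main obstacle.} I expect Step 2 to be the hard part. The multigraph forgets the diagram labels, so the local structure must be shown rigid. A difficulty is that distinct diagrams might have isomorphic balls, for instance diagrams related by flypes or by symmetries of $S^2$. The strategy I would try first is to enlarge the radius until the pattern of the R3 triangles, read as short cycles mixing move types, pins down the crossing signs.
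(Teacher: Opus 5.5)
The paper does not prove this statement; it imports it from Barbensi--Celoria. Their Theorem~3.23 recovers move types and crossing numbers from the local multigraph, and their Corollary~5.4 says that a ball $S_R(D)$ of sufficiently large, $D$-dependent radius occurs only in $G_S(K)$ or $G_S(\mirror K)$. Your outline has this shape, and Step~3 follows correctly from Step~2. But Steps~1 and~2 carry all of the content, neither is established, and the mechanisms you propose would not work as stated.

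\textbf{Step 1.} There is no fixed valence increment along R1 edges. The valence $\nu(D)$ counts the R2-creation moves, whose number inside a face grows quadratically with the face degree. Adding a curl on an edge $e$ raises the degree of the face receiving the loop by $2$ and of the opposite face by $1$. So $\nu(D')-\nu(D)$ depends on the faces adjacent to $e$, and R2 and R3 edges likewise give variable increments. Nor does the number of R1 edges give $c(D)$. Under the convention of this section, moves on $D$ exchanged by an orientation-preserving symmetry of $D$ are the \emph{same} edge, so the $8c(D)$ curl-creating moves are counted only up to the symmetry group of $D$. For instance, the standard trefoil diagram, whose symmetry group of order $6$ acts freely on its $24$ curl moves, has four curl-creating edges. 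That is exactly as many as the one-crossing unknot diagram. What is required is an intrinsic, symmetry-robust characterization of move types and crossing numbers, and that is precisely what you would have to import from Barbensi--Celoria.

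\textbf{Step 2.} The claim that the radius-$2$ ball determines $D$ up to mirror and orientation of $S^2$ is asserted without argument. The supporting heuristic is also backwards: when two R2 moves are exchanged by a symmetry of $D$ they define one edge, not two parallel ones. Parallel edges arise from moves related by no symmetry whose outcomes happen to be isotopic. A uniform radius is also far stronger than what the cited results give. The paper uses a computable choice of $R(D)$ only for non-periodic $K$, which suggests that symmetric diagrams, where edge identifications erase counting data, are where local reconstruction becomes delicate.

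Finally, the ``main obstacle'' you name is not an obstacle. Isomorphic balls around flype-related diagrams of the same knot are harmless for the theorem. What must be excluded is an isomorphic ball around a diagram of a knot type other than $K$ and $\mirror K$, which is exactly the nontrivial assertion of Corollary~5.4. As it stands, the proposal is a conditional outline, and the theorem is obtained only by citing Barbensi--Celoria, as the paper does.
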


For a vertex $D$, let $S_R(D)$ denote the full radius-$R$ rooted ball in
the multigraph metric, including all parallel edges whose endpoints lie in
the ball.  We also retain the intrinsically recoverable Reidemeister type of
each edge and the ambient multigraph valence
\[
  \nu(D)=\val_{G_S(K)}(D).
\]
The valence decoration is finite and is used below to certify that an
occurrence in a filtered submultigraph has not omitted additional incident
moves at an interior vertex.

\begin{definition}[Rooted Reidemeister-ball certificate]
The rooted typed and valence-decorated ball
\[
  \mathbb B_R(D)
\]
is the rooted multigraph $S_R(D)$ with Reidemeister edge types, edge
multiplicities, and the ambient valence label $\nu(E)$ on every vertex
$E\in S_R(D)$.  An occurrence of $\mathbb B_R(D)$ in another
Reidemeister multigraph means an isomorphism onto a full radius-$R$ rooted
ball preserving all these data.  It does not mean an arbitrary injective
subgraph morphism.
\end{definition}

\begin{corollary}[Finite local characterization, after Barbensi--Celoria]
\label{thm:BC-finite-local}
For every $D\in G_S(K)$ there exists $R=R(D)>0$ such that the finite
certificate $\mathbb B_R(D)$ is characterizing: if it occurs as a full
rooted ball in $G_S(K')$, then
\[
  K'=K\quad\text{or}\quad K'=\mirror K.
\]
\end{corollary}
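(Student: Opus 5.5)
The plan is to derive the corollary from the Barbensi--Celoria local theorem, which the paper cites in the form ``every vertex lies in a sufficiently large finite characterizing ball'' \cite{BarbensiCeloria}. The only work is to check that our decorated certificate $\mathbb B_R(D)$ carries at least as much information as the balls used there. The natural route is a compactness/contrapositive argument over the countably many knot types. Suppose no radius works. Then for every $R\in\mathbb N$ there is a knot type $K_R\notin\{K,\mirror K\}$ and a vertex $D_R\in G_S(K_R)$ with $\mathbb B_R(D_R)\cong\mathbb B_R(D)$ as rooted, typed, valence-decorated multigraph balls.

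The key steps are as follows. (1) Since $G_S(K)$ is locally finite, each $S_R(D)$ is finite, so $\mathbb B_R(D)$ is a finite object. Isomorphisms of full rooted balls restrict to isomorphisms of smaller full rooted balls, because the $r$-ball about the root inside $S_R$ equals $S_r$ for $r\le R$. (2) By a diagonal (K\H{o}nig's lemma) argument, choose for each $R$ an isomorphism $\phi_R\colon S_R(D)\to S_R(D_R)$. The restrictions of these maps to $S_r(D)$ range over finitely many combinatorial types for each fixed $r$. The aim is a compatible system showing that the decorated balls of $G_S(K_R)$ around $D_R$ agree with those of $G_S(K)$ around $D$ on ever larger radii. (3) Invoke the finite-radius form of Barbensi--Celoria: there is $R_0$ such that whenever the rooted $R_0$-ball of $G_S(K)$ at $D$ is isomorphic to the rooted $R_0$-ball of some $G_S(K')$, the knot $K'$ is $K$ or $\mirror K$. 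Taking $R\ge R_0$ and applying this to $K'=K_R$ gives a contradiction. If the cited result is only stated for undecorated balls, the decorations merely strengthen the hypothesis: forgetting types and valences maps an occurrence of $\mathbb B_R(D)$ to an occurrence of the plain ball $S_R(D)$, so characterization transfers directly.

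The main obstacle is step (3): ensuring that the Barbensi--Celoria statement really gives a radius that is uniform over all competitor knots $K'$, not merely one that works against each $K'$ separately. If only the pairwise version is available, I would try to recover uniformity through the decorations. Each vertex $E$ with $d(D,E)<R$ has all its $G_S(K')$-neighbours inside the ball, and the valence label $\nu(E)$ certifies that no incident move is missing. Hence matching balls of radius $R$ for every $R$ would yield, by the diagonal argument of step (2), a rooted isomorphism $G_S(K)\cong G_S(K_\infty)$ for some limiting knot type $K_\infty$. The remaining difficulty is to ensure that $K_\infty$ is an actual knot type, not just a limit of the $K_R$. For this I would use that the root diagram $D_R$ has the fixed crossing number $c(D)$, so only finitely many knot types occur among the $K_R$. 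Passing to a constant subsequence $K_R=K'$, the diagonal argument gives $G_S(K')\cong G_S(K)$, and \cref{thm:BC-complete} then forces $K'\in\{K,\mirror K\}$, which is the desired contradiction.
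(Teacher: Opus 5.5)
Your step (3) is the paper's proof: it cites Barbensi--Celoria's Corollary~5.4, which already supplies one radius that works against all competitor knots $K'$, and observes that the type, multiplicity, and valence decorations only strengthen the occurrence hypothesis, so your contrapositive and diagonal scaffolding in steps (1)--(2) is not needed. Your fallback, however, assumes $\crn(D_R)=\crn(D)$ without justification, and this does not follow from an isomorphism of decorated balls that forget diagram labels; it would need a local recovery of crossing numbers such as the one the paper attributes to Barbensi--Celoria's Theorem~3.23, and with that input your K\H{o}nig-lemma argument combined with \cref{thm:BC-complete} goes through.
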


\begin{proof}
Barbensi--Celoria's Corollary~5.4 states that a sufficiently large finite
radius ball $S_R(D)$ can occur only in the $S^2$-Reidemeister graph of the
same knot type, up to the unavoidable mirror ambiguity.  Their
Theorem~3.23 recovers Reidemeister move types and crossing numbers from the
local graph.  Retaining the multigraph structure and ambient valences makes
explicit the complete local data used in that reconstruction.
\end{proof}

\begin{definition}[Finite Reidemeister certificate]
A finite Reidemeister certificate is a finite rooted typed multigraph with
edge multiplicities and finite vertex decorations.  It is
\emph{characteristic for $K$} if every full rooted-ball occurrence in an
$S^2$-Reidemeister multigraph forces the knot type to be $K$ or $\mirror K$.
The certificates $\mathbb B_R(D)$ supplied by
\cref{thm:BC-finite-local} are called BC-characteristic certificates.
\end{definition}

\begin{corollary}[Existence]
\label{cor:finite-pattern-existence}
Every knot type admits a finite BC-characteristic certificate.
\end{corollary}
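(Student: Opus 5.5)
The plan is to derive the statement directly from \cref{thm:BC-finite-local}, so the only real work is to make sure its hypotheses are met. Fix a knot type $K$. The first step is to show that $\calD_S(K)$ is nonempty. Since $K$ is tame, it has a polygonal representative in $\R^3$. After a small perturbation of its vertices, the orthogonal projection to a generic plane is regular: it has only transverse double points, and no vertex projects onto another edge. Recording over/under information at the double points and compactifying the plane to $S^2$ then gives a diagram $D\in\calD_S(K)$, hence a vertex of $G_S(K)$.

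Next, apply \cref{thm:BC-finite-local} to $D$. This gives a radius $R=R(D)$ such that $\mathbb B_R(D)$ is characterizing in the sense of that corollary: every full rooted-ball occurrence of it in some $G_S(K')$ forces $K'\in\{K,\mirror K\}$. This is exactly the condition in the definition of a BC-characteristic certificate.

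The remaining step is to check that $\mathbb B_R(D)$ is a finite Reidemeister certificate, meaning a finite rooted typed multigraph with finite multiplicities and finite decorations. Finiteness comes from local finiteness of $G_S(K)$. A diagram with $c$ crossings admits only finitely many Reidemeister moves up to diagram isotopy, and any such move produces a diagram with at most $c+2$ crossings. An induction on $R$ therefore shows that $S_R(D)$ has finitely many vertices, and each vertex has finite valence, so there are finitely many edges counted with multiplicity. The decorations are finite data as well: each edge carries a type in $\{R1,R2,R3\}$, and each vertex $E$ carries the valence $\nu(E)=\val_{G_S(K)}(E)<\infty$, which is again finite by local finiteness.

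I do not expect a genuine obstacle; this is a bookkeeping corollary. The one point that needs care is local finiteness, since it is what makes the ball finite and the valence labels well defined. I would cite it from \cite{BarbensiCeloria}, as the paper already does, and add the crossing-count bound above as the elementary justification.
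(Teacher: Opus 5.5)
Your proposal is correct and takes the same route as the paper, which states this corollary without a separate proof as an immediate consequence of \cref{thm:BC-finite-local} applied to any diagram $D$ of $K$. Your extra checks, namely that $\calD_S(K)$ is nonempty and that $\mathbb B_R(D)$ is finite by local finiteness of $G_S(K)$, make explicit what the paper leaves implicit.
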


\subsection{Why arbitrary subgraph occurrence is insufficient}

Two distinctions will be used throughout the paper.

First, an arbitrary injective copy of a finite graph is too weak for local
reconstruction: extra incident edges in the ambient Reidemeister multigraph
can change precisely the valence and short-cycle data used in the proof.
Second, vertices in the geometric graph are internally represented by
actual diagrams, but the recognition certificate forgets those labels.  A
labeled diagram already determines its knot type, so retaining diagram
identity would make the word ``recognition'' tautological.  Recognition
below is therefore recognition from the finite abstract multigraph data and
its intrinsic finite decorations.

\begin{proposition}[No nontrivial complete-ball inclusion order]
\label{prop:no-global-order}
If a BC-characteristic certificate for $K$ occurs as a full rooted ball in
$G_S(K')$, then $K'=K$ or $K'=\mirror K$.  Hence any preorder on completed
Reidemeister multigraphs whose comparison maps preserve such complete
rooted-ball data collapses to equality up to mirroring.
\end{proposition}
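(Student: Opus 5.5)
The first assertion is essentially a restatement of the definition of a characteristic certificate together with \cref{thm:BC-finite-local}, so the plan is to unwind definitions and then derive the collapse of the preorder formally. First, let $\mathbb B_R(D)$ be BC-characteristic for $K$, and suppose it occurs as a full rooted ball in $G_S(K')$. This means there is a vertex $D'$ of $G_S(K')$ and an isomorphism $\mathbb B_R(D)\cong\mathbb B_R(D')$ preserving root, Reidemeister edge types, multiplicities and ambient valences. By the definition of ``characteristic for $K$'' (equivalently, by the conclusion of \cref{thm:BC-finite-local} applied to this $R$), we get $K'=K$ or $K'=\mirror K$. The only thing to check is that the notion of occurrence used in the hypothesis matches the one in the definition: a \emph{full} radius-$R$ ball including all parallel edges and the valence labels, not merely an injective subgraph. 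Both the definition and the corollary use exactly that notion, so no extra argument is needed.

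For the second assertion, fix a preorder $\preceq$ on the completed multigraphs $\{G_S(K)\}$ in which a comparison $G_S(K)\preceq G_S(K')$ is witnessed by a map carrying full rooted typed valence-decorated balls of $G_S(K)$ isomorphically onto full rooted balls of $G_S(K')$. Given such a comparison, choose any vertex $D$ of $G_S(K)$. By \cref{thm:BC-finite-local} there is an $R$ with $\mathbb B_R(D)$ BC-characteristic. The comparison map then produces an occurrence of $\mathbb B_R(D)$ as a full rooted ball in $G_S(K')$, and the first part gives $K'\in\{K,\mirror K\}$. Hence every comparable pair consists of mirror-equivalent knot types. So the preorder restricted to knot types modulo mirroring is the discrete one: equality.

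The main obstacle is in the second step, where we must make ``comparison maps preserve complete rooted-ball data'' precise. In particular, the preserved data must include the ambient valence $\nu$ computed in the \emph{target} graph. Otherwise an inclusion of a ball could add incident edges, and then the occurrence would no longer be full. I would state this as the standing hypothesis on the preorder, and note explicitly that preserving $\nu$ forces the image of every interior vertex of the ball to have no additional incident edges. This is exactly what makes the image a full ball rather than an injective copy, as discussed in the preceding subsection.
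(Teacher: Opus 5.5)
Your proposal is correct and follows the paper's approach: the paper's proof is the one-line remark that the first assertion is exactly the characterizing property of the certificate, and your derivation of the preorder collapse (take any vertex, use its BC-characteristic ball from \cref{thm:BC-finite-local}, and transport it along the comparison map) is the intended unwinding of the ``Hence.'' One small caution on your closing remark: preserving $\nu$ does force every target edge at an image of a vertex of distance $<R$ to lie in the image, but it does not rule out an extra target edge joining two images of vertices at distance exactly $R$ (compensated by fewer outward edges), so you are right to take full-ball preservation itself as the standing hypothesis rather than deriving it from valence preservation.
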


\begin{proof}
This is exactly the characterizing property of the certificate.
\end{proof}

The meaningful object is therefore not inclusion of completed graphs, but
the scale at which complete finite certificates become geometrically
available.


\section{Generic projections, Reidemeister walls, and Cerf graphics}
\label{sec:projection}

Here a projection always means the ordinary operation of obtaining a regular diagram from a spatial knot.  Fix \(u\in\Sph^2\), and let
\[
  p_u:\R^3\to u^\perp
\]
be orthogonal projection.  The main point of this section is that changes in the projected Reidemeister graph occur when the projected curve crosses the discriminant of non-regular projections.  This is a Cerf-type viewpoint, modeled on Cerf's stratification theory \cite{Cerf} and analogous in spirit to tracking critical events in Morse theory \cite{Milnor}.  In a controlled generic one-parameter family, the standard codimension-one pieces met transversely are the local events corresponding to the Reidemeister moves; \cref{fig:cerf-walls} gives the schematic picture used below.  No global stratification theorem for the full constrained $C^{1,1}$ space is assumed here.

\subsection{Regular projections}

\begin{definition}[Regular projection]
For a representative \(\gamma\subset\R^3\), the projection \(p_u(\gamma)\) is regular if it has only finitely many transverse double points and has no triple point, tangency, or cusp-type singularity.  The projected image with over-under crossing information is denoted by \(\Pi_u(\gamma)\).
\end{definition}

\begin{convention}[Regularity of paths]
Projection-genericity statements are made for admissible paths that are sufficiently regular for the usual singularity analysis of planar projections.  For example, one may work with piecewise \(C^2\) thick embeddings and piecewise \(C^2\) one-parameter families, or with a smooth dense subspace of the \(C^{1,1}\) thick-curve space.
\end{convention}

\begin{definition}[Projection discriminant]
For a projection direction \(u\), the projection discriminant at level \(\Lambda\) is
\[
  \Delta_{\Lambda,u}(K)
  =
  \{x\in\calX_{\Lambda,u}(K)\mid \Pi_u(x)\text{ is not a regular diagram}\}.
\]
Its complement
\[
  \calX_{\Lambda,u}^{\operatorname{reg}}(K)
  =
  \calX_{\Lambda,u}(K)\setminus\Delta_{\Lambda,u}(K)
\]
is the regular projection locus.
\end{definition}

The connected regions of \(\calX_{\Lambda,u}^{\operatorname{reg}}(K)\) are domains on which the combinatorial type of the projected diagram is locally constant.  Thus the Reidemeister graph changes only when an admissible path meets \(\Delta_{\Lambda,u}(K)\).

\subsection{Reidemeister walls}

\begin{definition}[Reidemeister wall]
A point \(x\in \Delta_{\Lambda,u}(K)\) lies on a Reidemeister wall if a representative of \(x\) has projection with exactly one standard generic codimension-one singularity and is otherwise regular.  The wall is said to be:
\begin{enumerate}[label=(\roman*)]
  \item of type \(R1\) if the singularity is a cusp-type event producing or cancelling a small loop;
  \item of type \(R2\) if the singularity is a self-tangency of two projected branches producing or cancelling a pair of crossings;
  \item of type \(R3\) if the singularity is a transverse triple point producing a Reidemeister III transition.
\end{enumerate}
We denote the corresponding parts of the discriminant by
\[
  \Delta_{\Lambda,u}^{R1}(K),\qquad
  \Delta_{\Lambda,u}^{R2}(K),\qquad
  \Delta_{\Lambda,u}^{R3}(K).
\]
\end{definition}

\begin{figure}[t]
\centering
\begin{tikzpicture}[
  >=Latex,
  every node/.style={font=\scriptsize},
  wall/.style={line width=0.8pt},
  cpt/.style={circle, fill, inner sep=1.4pt}
]
\draw[rounded corners=10pt, gray!70] (-0.3,-0.3) rectangle (8.9,3.4);
\node[anchor=north west, gray, font=\tiny] at (-0.3,-0.42)
  {$\calX^{<}_{\Lambda,u}(K)$, schematic};

\draw[wall]                           (1.7,-0.3) -- (2.1,3.4);
\draw[wall, dashed]                   (4.5,-0.3) -- (4.1,3.4);
\draw[wall, dotted, line width=1.1pt] (6.5,-0.3) -- (6.9,3.4);
\node[font=\tiny] at (1.42,3.02) {$\Delta^{R1}$};
\node[font=\tiny] at (4.82,3.02) {$\Delta^{R2}$};
\node[font=\tiny] at (6.32,3.02) {$\Delta^{R3}$};

\node at (0.75,2.45) {$D_0$};
\node at (3.15,2.45) {$D_1$};
\node at (5.45,2.45) {$D_2$};
\node at (7.95,2.45) {$D_3$};

\draw[->, line width=1pt, rounded corners=14pt]
  (0.15,0.9) -- (2.6,1.6) -- (5.2,1.2) -- (8.5,1.95);
\node[cpt] at (1.86,1.37) {};
\node[cpt] at (4.34,1.33) {};
\node[cpt] at (6.68,1.53) {};
\node[font=\tiny, above left=-1pt] at (1.86,1.44) {$R1$};
\node[font=\tiny, below=2.5pt]     at (4.34,1.30) {$R2$};
\node[font=\tiny, above left=-1pt] at (6.68,1.60) {$R3$};

\foreach \x/\n in {1.0/h0, 3.4/h1, 5.8/h2, 8.2/h3}{
  \node[circle, draw, inner sep=1.6pt] (\n) at (\x,-1.6) {};
}
\node[below=2pt] at (1.0,-1.7) {$D_0$};
\node[below=2pt] at (3.4,-1.7) {$D_1$};
\node[below=2pt] at (5.8,-1.7) {$D_2$};
\node[below=2pt] at (8.2,-1.7) {$D_3$};
\draw (h0) -- node[above, font=\tiny] {$R1$} (h1);
\draw (h1) -- node[above, font=\tiny] {$R2$} (h2);
\draw (h2) -- node[above, font=\tiny] {$R3$} (h3);
\node[font=\tiny, gray, anchor=east] at (0.55,-1.6) {$\calH_{\Lambda,u}$:};

\draw[->, gray!60, dashed, shorten >=2pt] (1.86,1.30) -- (2.14,-1.18);
\draw[->, gray!60, dashed, shorten >=2pt] (4.34,1.26) -- (4.55,-1.18);
\draw[->, gray!60, dashed, shorten >=2pt] (6.68,1.46) -- (6.95,-1.18);
\end{tikzpicture}
\caption{Projection--Cerf bookkeeping on a strict sublevel.  The
codimension-one Reidemeister walls (solid: $R1$, cusp; dashed: $R2$,
self-tangency; dotted: $R3$, triple point) separate chambers on which the
regular projected diagram is constant.  A projection-generic admissible
path crosses the walls one at a time and transversely; each crossing
contributes one Reidemeister edge of the induced path in the diagram-image
filtration $\calH_{\Lambda,u}(K)$ (bottom row).}
\label{fig:cerf-walls}
\end{figure}

\begin{remark}
The phrase ``cusp-type'' is used deliberately.  In the classical diagrammatic formulation, the Reidemeister \(R1\) move is the creation or cancellation of a curl.  In a one-parameter family of projected curves, the critical event may be represented locally by a cusp-type singularity or, equivalently, by crossing the codimension-one wall separating diagrams with and without the small loop.  The precise local normal form depends on the chosen regularity category, but the corresponding diagrammatic event is \(R1\).
\end{remark}

\begin{definition}[Higher discriminant]
The higher discriminant is the subset
\[
  \Delta_{\Lambda,u}^{\ge2}(K)
  =
  \Delta_{\Lambda,u}(K)\setminus
  \bigl(
  \Delta_{\Lambda,u}^{R1}(K)\cup
  \Delta_{\Lambda,u}^{R2}(K)\cup
  \Delta_{\Lambda,u}^{R3}(K)
  \bigr).
\]
It consists of non-generic events such as simultaneous Reidemeister events, higher-order tangencies, quadruple points, or tangencies occurring at a crossing.
\end{definition}

For a one-parameter generic path, the higher discriminant is avoided.  Thus the only wall crossings that remain are \(R1\), \(R2\), and \(R3\).

\subsection{Local wall-crossing models}

The following observation is the local mechanism behind the graph-growth construction.

\begin{observation}[Local wall-crossing model]
Let \(\gamma_t\), \(t\in(-\eps,\eps)\), be a sufficiently regular path in \(\calX_{\Lambda,u}(K)\).  Suppose that \(\gamma_0\) crosses one Reidemeister wall transversely and avoids the higher discriminant.  Then for sufficiently small \(t>0\), the two regular diagrams
\[
  \Pi_u(\gamma_{-t})
  \quad\text{and}\quad
  \Pi_u(\gamma_t)
\]
differ by exactly one Reidemeister move of the type of the wall crossed.
\end{observation}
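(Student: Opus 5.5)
The plan is to localize the wall crossing and then read off the diagram change from a normal form. Since \(\gamma_0\) lies on exactly one Reidemeister wall and avoids \(\Delta^{\ge2}_{\Lambda,u}(K)\), its projection \(p_u(\gamma_0)\) has one singular point \(q\in u^\perp\) and is otherwise regular. That singularity is a cusp-type point, a simple self-tangency, or a transverse triple point.

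First I will choose a small closed disk \(B\subset u^\perp\) around \(q\) with three properties. It contains no other double points of \(p_u(\gamma_0)\). Its boundary meets \(p_u(\gamma_0)\) transversely, in a fixed finite number of arcs: two for \(R1\) or \(R2\), six endpoints for \(R3\). Outside \(B\), the projection is a regular immersion with only transverse double points.

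Regularity of the path (piecewise \(C^2\), as in the Convention on regularity of paths) makes the family \(p_u(\gamma_t)\) \(C^1\)-continuous in \(t\). By openness of transversality, for \(|t|\) small the following holds. Outside \(B\), the curves \(p_u(\gamma_t)\) remain regular. Their double points move continuously with unchanged over/under data, since the heights of the two preimages stay separated by continuity. Hence the part of \(\Pi_u(\gamma_t)\) outside \(B\) is constant up to planar isotopy, and so is the set of boundary points on \(\partial B\).

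Second, I will analyze the family inside \(B\) using the local normal form of the wall. Transversality of the crossing means the one-parameter unfolding is versal, so after a \(t\)-dependent change of coordinates it is one of the standard models.

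\begin{itemize}
\item For \(R2\), write the two branches as graphs \(y=0\) and \(y=x^2-ct\) with \(c\neq0\). This gives zero double points for one sign of \(t\) and two for the other. Over/under is constant because the two branches come from strands at distinct heights.
\item For \(R3\), take three lines whose signed triangle area is a linear function of \(t\) with nonzero derivative. The small triangle flips orientation, and the height ordering of the three strands is constant.
\item For \(R1\), use the cusp/curl model \(x\mapsto (x^2,\, x^3-ctx)\) in the plane. This creates a small loop with a single crossing for one sign of \(t\) and none for the other. The crossing sign is forced by the spatial tangent's vertical component.
\end{itemize}

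In each case the tangles \(\Pi_u(\gamma_{\pm t})\cap B\) are exactly the two sides of the corresponding Reidemeister move, and they agree on \(\partial B\). Gluing these to the constant exterior shows that the two diagrams differ by one move of the wall's type.

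The main obstacle is the \(R1\) case and, more broadly, justifying the normal form in the stated regularity class. A generic cusp of a projected space curve in a one-parameter family is a codimension-two phenomenon for plane curves but codimension one for the family. I would first try to derive the model from the space-curve jet: at the cusp the tangent is parallel to \(u\), and transversality to the wall means the horizontal component of the tangent passes through zero with nonzero speed. Taylor expansion to third order then gives the swallowtail-free model above. This derivation needs \(C^3\) control near \(q\), which I would obtain by appealing to the smooth dense subspace allowed in the Convention, as the Remark on cusp-type events already permits.
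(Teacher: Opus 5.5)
Your proposal is correct and follows the paper's route. The paper's proof simply invokes the standard local classification of codimension-one events (cusp, self-tangency, triple point) in generic one-parameter families of plane projections, citing Arnold and Polyak; your localization to a disk $B$ with constant exterior, together with the explicit models $(x^2,x^3-ctx)$, $y=x^2-ct$, and the flipping triangle, spells out exactly that citation in more detail. The only slip is cosmetic: $\partial B$ meets the curve in one, two, and three arcs (two, four, and six endpoints) for $R1$, $R2$, and $R3$ respectively.
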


\begin{proof}
This is the standard local classification of codimension-one singularities in a generic one-parameter family of plane projections of an embedded curve; compare the classical treatments of plane-curve perestroikas and fronts \cite{ArnoldPlaneCurves,PolyakCurvesFronts}.  A cusp-type event gives the creation or cancellation of a curl, a self-tangency gives the creation or cancellation of two crossings, and a triple point gives the interchange of three local branches.  These are exactly the three Reidemeister moves.
\end{proof}

\begin{definition}[Projection-generic path]
An admissible path \(\gamma_t\in\calX_{\Lambda,u}(K)\) is projection-generic with respect to \(u\) if:
\begin{enumerate}[label=(\roman*)]
  \item \(\gamma_t\) is contained in the regular projection locus except for finitely many parameters;
  \item at each exceptional parameter, \(\gamma_t\) crosses exactly one Reidemeister wall transversely;
  \item the path is disjoint from the higher discriminant.
\end{enumerate}
\end{definition}

\begin{observation}
A projection-generic admissible path determines a finite sequence of regular
diagrams separated by Reidemeister moves.  On a strict ropelength sublevel,
the required general-position perturbation is available because compact
paths have uniform slack; this is proved in
\cref{prop:parametric-regularization}.  At a closed boundary level one allows
an arbitrarily small increase of the level, as in
\cref{cor:epsilon-genericity}.  The finite-dimensional and polygonal
statements below remain useful concrete bookkeeping models.
\end{observation}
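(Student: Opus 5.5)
The plan is to cut the parameter interval at the finitely many exceptional times and to read off one diagram on each complementary interval and one move at each cut. Let $\gamma_t$, $t\in[0,1]$, be a projection-generic admissible path. By condition (i) of the definition, the set of parameters at which $\Pi_u(\gamma_t)$ fails to be regular is finite. I list it as $t_1<\dots<t_k$ and set $t_0=0$, $t_{k+1}=1$. Then $[0,1]\setminus\{t_1,\dots,t_k\}$ is a finite union of intervals $J_0,\dots,J_k$. The claim splits into two steps: the diagram is constant on each $J_i$, and the diagrams on $J_{i-1}$ and $J_i$ differ by exactly one Reidemeister move. If an endpoint $0$ or $1$ is itself exceptional, I use the adjacent open interval and one-sided limits instead.

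\emph{Step 1 (constancy on chambers).} I will show that the planar isotopy class of the regular diagram $\Pi_u(\gamma_t)$ is locally constant in $t$ on each $J_i$; connectedness of $J_i$ then gives constancy. Fix $s\in J_i$. A regular projection has finitely many transverse double points and no other singularities. Since the admissible family is jointly $C^{1,1}$ (and piecewise $C^2$ under the regularity convention), both $\gamma_t$ and its unit tangent vary continuously in $C^0$ as functions of $t$. Transversality of the double points is an open condition. Because $|\partial_s\Gamma|$ is bounded away from zero, the projected curve has no cusps near time $s$. By compactness of $S^1\times S^1$ away from a neighbourhood of the diagonal and of the double-point pairs, no new double points can appear for $t$ near $s$. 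The double points therefore move continuously and keep their over/under data, since the height difference at each crossing is nonzero and varies continuously. A standard isotopy-extension argument for plane curves then yields an orientation-preserving isotopy of $S^2$ carrying $\Pi_u(\gamma_s)$ to $\Pi_u(\gamma_t)$. By \cref{lem:projection-well-defined}, this class does not depend on the representative chosen in the $\Sim_u^+$-orbit.

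\emph{Step 2 (one move per wall).} At each $t_i$, conditions (ii) and (iii) of projection-genericity say that the path crosses exactly one Reidemeister wall, transversely, and avoids the higher discriminant. The Local wall-crossing model observation therefore applies on a small window $(t_i-\eps,t_i+\eps)$. It gives that $\Pi_u(\gamma_{t_i-\delta})$ and $\Pi_u(\gamma_{t_i+\delta})$ differ by a single move of the wall's type, for all small $\delta>0$. By Step 1, these two diagrams are the constant diagrams $D_{i-1}$ on $J_{i-1}$ and $D_i$ on $J_i$. The output is the finite sequence $D_0,\dots,D_k$ together with the $k$ Reidemeister moves between consecutive diagrams.

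The step I expect to be the main obstacle is Step 1, specifically excluding the sudden appearance of new crossings. The danger comes from distant arcs approaching in projection or from near-tangent self-crossings. I would handle it by a uniform lower bound on the projected separation of non-crossing arc pairs over a small time window. This bound follows from compactness, together with the $C^0$ continuity of tangents, which keeps nearby parameter pairs locally graph-like over $u^\perp$. Positive thickness is not needed for this step.
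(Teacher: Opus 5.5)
Your proposal proves only the first sentence of the observation, and that is the gap. The second and third sentences assert that the general-position perturbation producing projection-generic paths exists inside a strict sublevel, and at a closed level only after an arbitrarily small increase of $\Lambda$. This is where the analytic content lies: without it, the first sentence only describes paths already assumed to be generic. You neither prove nor cite it.

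The missing idea is uniform ropelength slack. For a strictly $\Lambda$-admissible isotopy, $M=\sup_t\Rop(\Gamma_t)<\Lambda$ holds by definition; it is not deduced from pointwise inequalities, since continuity of reach along the family is never assumed. The paper then argues in four steps (\cref{prop:parametric-regularization}):
\begin{enumerate}[label=(\roman*)]
\item A time-dependent homothety to length one gives slices of thickness at least $M^{-1}$.
\item The relative smoothing of \cref{lem:parametric-smoothing}, with $(1+\epsilon)/(M^{-1}-\delta)<\Lambda$, keeps the family strictly below $\Lambda$ and fixed on endpoint collars.
\item A $C^2$-small relative multijet-transversal perturbation keeps curvature radius and doubly-critical self-distance bounded below, so the strict inequality survives.
\item The codimension count then removes the higher discriminant.
\end{enumerate}
A path touching $\Rop=\Lambda$ has no such slack: a perturbation may leave the closed sublevel. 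This is why one uses that a $\Lambda$-admissible path is strictly $(\Lambda+\varepsilon)$-admissible (\cref{cor:epsilon-genericity}). You should either cite these two results or supply this argument.

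For the first sentence, your argument is correct and matches the paper's implicit justification. Finiteness of the exceptional set is condition (i) of projection-genericity. Constancy of the diagram between exceptional times is the local constancy of planar type along regular families (\cref{lem:planar-type}). The single move at each exceptional time is the local wall-crossing observation.

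One justification in Step 1 is wrong as written, though its conclusion holds. ``$|\partial_s\Gamma|$ bounded away from zero'' does not exclude cusps of the projection, which occur exactly where the spatial tangent is parallel to $u$. The correct reason is this: at a regular time $t_*$ the projected velocity $p_u(\partial_s\Gamma(\cdot,t_*))$ is nowhere zero, hence uniformly bounded below on $S^1$. Joint continuity of $\partial_s\Gamma$ then keeps it bounded below for $t$ near $t_*$. This projected-speed bound is also what gives the uniform local injectivity near the diagonal that you need to rule out new crossings.
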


\begin{definition}[Liftable Reidemeister transition]
A Reidemeister transition between projected diagrams is \(\Lambda\)-liftable if it is realized as the projection of a projection-generic admissible path segment in \(\calX_{\Lambda,u}(K)\).  Equivalently, it is a transverse crossing of one of the Reidemeister walls inside the ropelength sublevel space.
\end{definition}

\subsection{Cerf graphic of projected thick knots}

The discriminant can be organized into a Cerf-type picture by recording the ropelength level at which a wall is met.

\begin{definition}[Ropelength--projection Cerf graphic]
Fix a projection direction \(u\).  The ropelength--projection Cerf graphic is the subset
\[
  \mathcal C_u(K)
  =
  \{(\Lambda,x)\mid x\in\Delta_{\Lambda,u}(K),\ \Rop(x)=\Lambda\}
\]
viewed over the ropelength axis.  In any controlled stratification for which the standard codimension-one Reidemeister strata are defined, their images are called the \(R1\)-, \(R2\)-, and \(R3\)-branches of the graphic.
\end{definition}

This graphic records the boundary levels at which projected singular events
are met as the ropelength bound is relaxed.  The associated diagram and edge
birth scales are defined intrinsically from the monotone image filtration in
\cref{sec:growth-invariants}.  Vertices are born when new diagram types become
realizable by thick representatives of bounded ropelength, and edges are born
when the corresponding Reidemeister wall crossing has a bounded-ropelength
spatial realization.

\subsection{Three types of graph-growth events}

The wall-crossing viewpoint separates three kinds of changes in the filtered graph.

\begin{definition}[Graph-growth event]
A graph-growth event at ropelength level \(\Lambda_c\) is one of the following:
\begin{enumerate}[label=(\roman*)]
  \item a \emph{vertex birth}, where a new projected diagram first appears at level \(\Lambda_c\);
  \item an \emph{edge birth}, where a new liftable Reidemeister transition first appears at level \(\Lambda_c\);
  \item a \emph{component merge}, where two previously distinct accessible ranges become connected after adding vertices and edges visible at level \(\Lambda_c\).
\end{enumerate}
\end{definition}

\begin{remark}
An edge birth is controlled directly by a transverse crossing of a Reidemeister wall.  A vertex birth is a boundary phenomenon for the ropelength sublevel set: a new regular-projection region touches the boundary \(\Rop=\Lambda_c\).  A component merge is the \(H_0\)-level consequence of vertex and edge births.  Thus the diagrammatic merge tree records only a coarse shadow of the richer wall-crossing growth process.
\end{remark}


\section{Lifted graphs and their diagram-image filtration}
\label{sec:growth-graphs}

We now separate two objects that serve different purposes.  The lifted graph
retains projection-fiber components and is suited to component persistence.
Its diagram image is a monotone submultigraph of $G_S(K)$ and is suited to
finite recognition.

\begin{definition}[Projection fiber]
For a regular diagram $D$, define
\[
  F_{\Lambda,u}(D)
  =
  \{x\in\calX_{\Lambda,u}(K)\mid \Pi_u(x)=D\}.
\]
Its path components are called fiber components over $D$.
\end{definition}

\begin{definition}[Lifted ropelength-filtered Reidemeister multigraph]
The multigraph $\calG_{\Lambda,u}^{\operatorname{lift}}(K)$ has vertices
$(D,F)$, where $F$ is a path component of $F_{\Lambda,u}(D)$.  For each
classical Reidemeister edge $e:D\to D'$, there is one lifted edge
$(e,F,F')$ precisely when some $\Lambda$-admissible projection-generic movie
realizes $e$ from a point of $F$ to a point of $F'$.  Different classical
moves with the same endpoint diagrams remain distinct, but different movies
realizing the same triple $(e,F,F')$ do not create additional parallel
edges.
\end{definition}

There is a natural multigraph morphism
\[
  p_{\Lambda,u}:\calG_{\Lambda,u}^{\operatorname{lift}}(K)
  \longrightarrow G_S(K),
\]
sending $(D,F)$ to $D$ and a lifted movie to its classical Reidemeister
edge.

\begin{definition}[Diagram-image filtration]
\label{def:diagram-image}
The ropelength-filtered Reidemeister multigraph is the image
\[
  \calH_{\Lambda,u}(K)
  =\operatorname{im}(p_{\Lambda,u})\subset G_S(K).
\]
Thus a diagram is a vertex of $\calH_{\Lambda,u}(K)$ when it has a
representative of ropelength at most $\Lambda$, and a classical
Reidemeister edge belongs to $\calH_{\Lambda,u}(K)$ when that specific move
has some liftable spatial realization of ropelength at most $\Lambda$.
Each vertex is additionally decorated by its ambient valence
$\nu(D)=\val_{G_S(K)}(D)$.
\end{definition}

For $\Lambda\le\Lambda'$, every movie available at level $\Lambda$ remains
available at level $\Lambda'$.  Hence
\[
  \calH_{\Lambda,u}(K)\subseteq\calH_{\Lambda',u}(K)
\]
as decorated submultigraphs.  In contrast, the maps on lifted fiber
components need not be injective because components can merge.  This is the
reason finite recognition is defined using $\calH$, while merge persistence
is defined using $\calG^{\operatorname{lift}}$.

\begin{remark}[Finite witnesses, not necessarily a finite level graph]
The adjective ``finite'' refers to the recognizing certificates.  No
finiteness of $\calH_{\Lambda,u}(K)$ is asserted: a ropelength bound alone
does not supply a proved uniform bound on the complexity of one fixed-direction
projection, and compactness of a geometric sublevel would not by itself imply
that only finitely many diagram types occur.  All counting statistics below are therefore taken on specified
finite windows.
\end{remark}

\begin{definition}[Strict lifted graph]
For $\Lambda>\Rop(K)$, define
$\calG^{<,\operatorname{lift}}_{\Lambda,u}(K)$ exactly as
$\calG^{\operatorname{lift}}_{\Lambda,u}(K)$, but replace the closed
sublevel $\calX_{\Lambda,u}(K)$ by the strict sublevel
$\calX^{<}_{\Lambda,u}(K)$, use strict admissible components of the fibers,
and require every lifted movie to be strictly $\Lambda$-admissible.
\end{definition}

\begin{lemma}[Uniform thickness on a compact admissible family]
\label{lem:uniform-thickness-family}
Let $\Gamma:S^1\times[0,1]\to\mathbb R^3$ be an admissible isotopy whose
time slices have uniformly Lipschitz tangent.  Then
\[
  \inf_{t\in[0,1]}\Thi(\Gamma_t)>0.
\]
\end{lemma}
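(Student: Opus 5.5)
The approach is to use the standard characterization of thickness for $C^{1,1}$ curves and combine it with a compactness argument in $t$. For a $C^{1,1}$ embedded closed curve $\gamma$, the formula of Litherland--Simon--Durumeric--Rawdon \cite{LitherlandSimonDurumericRawdon} gives
\[
  \Thi(\gamma)=\min\Bigl\{\tfrac{1}{\kappa_{\max}(\gamma)},\ \tfrac12\,\mathrm{dcsd}(\gamma)\Bigr\}.
\]
Here $\kappa_{\max}$ is the essential supremum of curvature, and $\mathrm{dcsd}$ is the doubly critical self-distance, i.e. the smallest distance between two distinct points whose connecting chord is orthogonal to the curve at both ends. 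It therefore suffices to bound each of these two quantities uniformly in $t$.

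\textbf{Curvature.} Since $|\partial_s\Gamma|\ge c>0$ and the tangents of all time slices are Lipschitz with a uniform constant $M$, the unit tangent $T_t=\partial_s\Gamma_t/|\partial_s\Gamma_t|$ is Lipschitz in $s$ with a constant depending only on $c$, $M$ and $\sup|\partial_s\Gamma|$. The last quantity is finite by joint continuity on the compact set $S^1\times[0,1]$. Converting to arclength via the factor $|\partial_s\Gamma_t|\ge c$, we obtain a uniform bound $\kappa_{\max}(\Gamma_t)\le\kappa_0$. With finitely many pieces in the piecewise case, we take the maximum of the finitely many constants.

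\textbf{Self-distance.} We argue by contradiction. Suppose $t_n$ and pairs $s_n\ne s_n'$ satisfy $|\Gamma(s_n,t_n)-\Gamma(s_n',t_n)|\to0$ with the chord doubly critical. Pass to a convergent subsequence $t_n\to t_*$, $s_n\to s_*$, $s_n'\to s'_*$, using compactness of $S^1\times[0,1]$. There are two cases.
\begin{itemize}[leftmargin=*]
\item If $s_*\ne s'_*$, then $\Gamma_{t_*}$ has a double point, contradicting embeddedness.
\item If $s_*=s'_*$, then for large $n$ both points lie in a short parameter arc. On such an arc the uniform curvature bound $\kappa_0$ and the lower speed bound force the chord to make an angle with the tangent bounded away from $\pi/2$, by a standard estimate $|T(s')-T(s)|\le\kappa_0\cdot$arclength. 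This contradicts orthogonality of the chord to the tangent.
\end{itemize}
Hence $\inf_t\mathrm{dcsd}(\Gamma_t)>0$.

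An alternative that sidesteps the formula is to argue by lower semicontinuity. Reach is upper semicontinuous in general, but under a uniform $C^{1,1}$ bound the family $t\mapsto\Gamma_t$ stays in a set where the above estimates give a uniform local normal-graph radius and a uniform global injectivity gap. The estimates above are exactly this.

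The main obstacle is the near-diagonal case $s_*=s'_*$. The quantitative statement needed is that two points at arclength distance $\ell\le 1/\kappa_0$ on a curve with curvature at most $\kappa_0$ cannot have a chord orthogonal to both tangents. This follows from the elementary estimate $\langle \Gamma(s')-\Gamma(s),T(s)\rangle\ge \ell(1-\kappa_0\ell/2)>0$. The uniformity in $t$ comes only from the uniform Lipschitz constant and the uniform speed bound, so both must be kept explicit. The final bound is
\[
  \inf_t\Thi(\Gamma_t)\ \ge\ \min\Bigl\{1/\kappa_0,\ \tfrac12\inf_t\mathrm{dcsd}(\Gamma_t)\Bigr\}>0.
\]
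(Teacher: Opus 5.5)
Your proposal is correct and follows the same route as the paper. It applies the thickness formula, derives a uniform curvature-radius bound from the uniform tangent-Lipschitz and speed bounds, and runs a compactness argument for the doubly-critical self-distance that splits into a limiting double point versus a near-diagonal pair. Your explicit estimate $\langle\Gamma(s')-\Gamma(s),T(s)\rangle\ge\ell(1-\kappa_0\ell/2)$ makes quantitative the paper's appeal to the ``uniform local graph estimate'' in the near-diagonal case.
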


\begin{proof}
Use the thickness formula as the minimum of the curvature radius and half the
doubly-critical self-distance
\cite{LitherlandSimonDurumericRawdon,CantarellaKusnerSullivan}.  The uniform
Lipschitz bound on the tangent gives a positive uniform curvature radius.  If
the doubly-critical self-distance had a sequence tending to zero, compactness
would give parameter pairs and times converging to a limiting pair.  A
limiting pair with distinct parameters would be a self-intersection of the
limiting time slice.  A pair converging to the diagonal is excluded by the
uniform local graph estimate supplied by the tangent-Lipschitz bound.  Both
alternatives contradict embeddedness, so the doubly-critical self-distance
also has a positive uniform lower bound.
\end{proof}

\begin{lemma}[Relative parametric smoothing with uniform reach control]
\label{lem:parametric-smoothing}
Let $\Gamma:S^1\times[0,1]\to\mathbb R^3$ be a continuous family of
positive-thickness $C^{1,1}$ embedded closed curves, made constant on small
endpoint collars.  Assume that $\Gamma$ and $\partial_s\Gamma$ are continuous
on the parameter cylinder and that $|\partial_s\Gamma|$ is bounded away from
zero.  Put
\[
 r_\Gamma=\inf_{t\in[0,1]}\Thi(\Gamma_t)>0.
\]
In particular, the hypotheses hold for an admissible isotopy and for its
continuous time-dependent similarity normalizations.
For every $\epsilon>0$ and every $0<\delta<r_\Gamma$ there is an isotopy
$\Gamma^{\mathrm{sm}}$, fixed on smaller endpoint collars and smooth away
from those collars, such that
\[
 \sup_{t\in[0,1]}
 \bigl|\Len(\Gamma^{\mathrm{sm}}_t)-\Len(\Gamma_t)\bigr|<\epsilon,
 \qquad
 \inf_{t\in[0,1]}\Thi(\Gamma^{\mathrm{sm}}_t)
 \ge r_\Gamma-\delta.
\]
After reparametrizing the time slices, the approximation can be chosen
arbitrarily small in the uniform slice-wise $C^1$ topology.
\end{lemma}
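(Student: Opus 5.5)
We mollify the family in the curve parameter $s$ and control the reach through the standard formula for thickness. That formula writes $\Thi$ as the minimum of the minimal radius of curvature and half the doubly-critical self-distance \cite{LitherlandSimonDurumericRawdon,CantarellaKusnerSullivan}. By \cref{lem:uniform-thickness-family}, and since the hypotheses give uniform continuity of $\Gamma$ and $\partial_s\Gamma$ on the compact cylinder $S^1\times[0,1]$, we have $r_\Gamma>0$. Since $\Thi(\Gamma_t)\ge r_\Gamma$ for every $t$, each slice has tangent $\partial_s\Gamma_t/|\partial_s\Gamma_t|$ Lipschitz in arclength with constant at most $1/r_\Gamma$. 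With $|\partial_s\Gamma|$ bounded between two positive constants, this gives a uniform Lipschitz bound on $\partial_s\Gamma_t$ in the parameter $s$, uniform in $t$.

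\textbf{Step 1 (convolution in $s$).} For a small scale $h$, set $\Gamma^h_t=\Gamma_t*\rho_h$, a convolution in $s$ with a standard even mollifier. To smooth in $t$ as well, we convolve in $(s,t)$ jointly on the interior and interpolate to the identity on the endpoint collars with a cutoff $\chi(t)$. This keeps the family fixed on smaller collars, where $\Gamma$ is already constant in $t$, so the $t$-convolution does nothing there. Uniform continuity of $\partial_s\Gamma$ gives $\Gamma^h\to\Gamma$ and $\partial_s\Gamma^h\to\partial_s\Gamma$ uniformly on the cylinder. Length is the integral of $|\partial_s\Gamma^h_t|$, so this immediately yields $\sup_t|\Len(\Gamma^h_t)-\Len(\Gamma_t)|<\epsilon$ for small $h$. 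After constant-speed reparametrization of each slice, which depends continuously on $t$ because the speed is bounded below, it also yields the slice-wise $C^1$ closeness claim.

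\textbf{Step 2 (curvature bound).} Convolution does not increase the Lipschitz constant of $\partial_s\Gamma$ in $s$, but curvature involves the unit tangent. We write the curvature radius in the form
\[
|\partial_s\gamma|^2/|\partial_s^2\gamma\wedge\partial_s\gamma/|\partial_s\gamma||,
\]
or more robustly use the angle between unit tangents at $s,s'$ against the arclength between them. For $\gamma$ with reach $\ge r$, the unit tangents satisfy $|T(s)-T(s')|\le \ell(s,s')/r$. The mollified tangent $\partial_s\Gamma^h$ is an average of nearby tangents whose directions differ by $O(h/r)$. Its unit direction therefore inherits the Lipschitz bound up to a factor $1+O(h)$, coming from the speed variation over a window of size $h$ and made uniform by the uniform continuity of $|\partial_s\Gamma|$. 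So the minimal curvature radius of $\Gamma^h_t$ is at least $r_\Gamma(1-o(1))$ uniformly in $t$.

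\textbf{Step 3 (self-distance).} This is the main obstacle, since doubly-critical self-distance is not continuous under $C^1$ perturbation in general. We argue by contradiction and compactness, as in \cref{lem:uniform-thickness-family}. Suppose that along $h_k\to0$ there are times $t_k$ and doubly-critical pairs $(s_k,s'_k)$ for $\Gamma^{h_k}_{t_k}$ with distance $<2(r_\Gamma-\delta)$. Pass to a limit $(t,s,s')$. $C^1$ convergence makes the limiting chord orthogonal to both tangents of $\Gamma_t$, hence doubly critical for $\Gamma_t$, or else $s=s'$.

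If $s\ne s'$, the limiting distance is at most $2(r_\Gamma-\delta)<2\Thi(\Gamma_t)$, a contradiction. The subtle case is $s=s'$, where the pair collapses to the diagonal. We exclude it quantitatively. By Step 2 the curvature radius of $\Gamma^{h_k}_{t_k}$ is at least $r_\Gamma/2$ for large $k$, and a curve with curvature radius at least $\rho$ has no doubly-critical pair at arclength separation below $\pi\rho$: a chord normal at both ends requires the tangent to turn by roughly $\pi$. The pair separation is therefore bounded below, and the limit has $s\ne s'$.

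Hence $\inf_t\Thi(\Gamma^h_t)\ge r_\Gamma-\delta$ for small $h$. Embeddedness persists by the same argument, and the knot type persists by $C^1$-small isotopy. Together with Step 1, choosing $h$ small gives both inequalities simultaneously.
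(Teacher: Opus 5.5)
There is a genuine gap, located in the sentence before your Step~1. From $\Thi(\Gamma_t)\ge r_\Gamma$ and two-sided bounds on $|\partial_s\Gamma|$ you conclude that $\partial_s\Gamma_t$ is Lipschitz in $s$, uniformly in $t$. Thickness controls only the unit tangent $T$. Under the stated hypotheses the speed $v=|\partial_s\Gamma|$ is merely continuous on the cylinder; take, for example, a fixed round circle reparametrized by $C^1$ diffeomorphisms of $S^1$. So $\partial_s\Gamma=vT$ need not be Lipschitz in $s$, let alone uniformly in $t$. For a jointly $C^{1,1}$ admissible isotopy the bound does hold, but by joint regularity rather than by thickness, and the lemma is stated more generally.

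Your Step~2 depends on this bound: it is the source of your factor $1+O(h)$ ``from the speed variation over a window of size $h$''. Without it, the curvature estimate for the joint $(s,t)$-mollification is not just unproved but false in general. The tangential part of $\partial_s^2\Gamma^h$ can be as large as $\omega_v(h)/h$. Because the $t$-average mixes slices whose tangents at the same parameter differ only by $o(1)$, part of this tangential term leaks into the normal direction.

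Here is a concrete instance. Let every slice be the unit circle $c$, parametrized near an interior time $t_0$ by $s\mapsto c(s+\beta(s,t))$ with $\beta=\sin((t-t_0)/h)\,(B+ah\sin(s/h))$. All hypotheses hold with $r_\Gamma=1$. Yet the joint average at scale $h$ acquires a normal velocity component $\approx\kappa_0\,aB\cos(s/h)$, with $\kappa_0>0$ depending only on the kernel. Hence its curvature reaches about $1+\kappa_0 aB/h$, which is of order $h^{-1/2}$ when $a=B=h^{1/4}$. Superposing such blocks, with $a,B\to0$, at a suitably dense sequence of scales $h_k\to0$ in disjoint time windows gives a single family for which no small uniform scale preserves $\Thi\ge r_\Gamma-\delta$.

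The missing idea is the paper's Step~1 in \cref{app:parametric-smoothing-proof}: reparametrize every slice by normalized arclength \emph{before} mollifying.
\begin{itemize}
\item This reparametrization is continuous in $t$ and makes the speed equal to $L(t)$, independent of $\theta$.
\item Consequently $\partial_{\theta\theta}\gamma$ is purely normal, with $|\partial_{\theta\theta}\gamma|\le L(t)^2/r$ a.e.
\item Averaging then gives $|\partial_{\theta\theta}\gamma_j|\le(L(t)+\omega_L(\|\lambda_j\|_\infty))^2/r$ with no tangential leakage, and $\kappa\le|\gamma_j''|/|\gamma_j'|^2\to1/r$ uniformly.
\end{itemize}
You do invoke constant-speed reparametrization, but only after smoothing and only for the $C^1$ claim, where it cannot help. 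Once this is moved to the start, your length estimate and your Step~3 go through. Your exclusion of near-diagonal critical pairs via the turning of the tangent is a valid substitute for the paper's Taylor estimate in Step~5; an arclength lower bound of $\pi\rho/2$ already suffices.

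There is a second, smaller problem with the collars. If your cutoff means $\chi\Gamma^h+(1-\chi)\Gamma$, then slices with $0<\chi<1$ are convex combinations of a smooth curve and the merely $C^{1,1}$ endpoint curve. The family is then not smooth off the smaller collars, as the statement requires. The paper instead lets the mollification radius $\lambda_j(t)$ itself vanish on the collars, quadratically at their boundaries. Every non-collar slice is then a genuine mollification, and the seams are Lipschitz.

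Finally, \cref{lem:uniform-thickness-family} does not apply under these hypotheses, since it needs uniformly Lipschitz tangents. Since $r_\Gamma>0$ is part of the statement, that citation is simply unnecessary.
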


\begin{proof}
A complete curve-specific proof, including the relative endpoint statement
and the uniform control of both the curvature radius and the doubly-critical
self-distance, is given in \cref{app:parametric-smoothing-proof}.
\end{proof}

\begin{proposition}[Parametric regularization with ropelength slack]
\label{prop:parametric-regularization}
Let $\Gamma$ be a strictly $\Lambda$-admissible isotopy with regular
projected endpoints.  Then, relative to the endpoints, $\Gamma$ can be
replaced by a projection-generic strictly $\Lambda$-admissible isotopy.
In particular, the replacement has only finitely many singular projection
times, each giving exactly one move of type $R1$, $R2$, or $R3$.
\end{proposition}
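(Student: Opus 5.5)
The plan is to spend the strict inequality as a quantitative slack budget and then apply finite-dimensional transversality inside that budget.

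\textbf{Slack.} Since $\Gamma$ is strictly $\Lambda$-admissible, put
\[
  \eta=\Lambda-\sup_{t}\Rop(\Gamma_t)>0 .
\]
Lift $\Gamma$ to a genuine family of curves in $\mathbb R^3$. The endpoint classes are only defined up to $\Sim_u^+(\mathbb R^3)$, so choose a continuous similarity normalization; this changes neither ropelength nor the projected diagram up to isotopy of $S^2$ (\cref{lem:projection-well-defined}). By \cref{lem:uniform-thickness-family}, $r_\Gamma=\inf_t\Thi(\Gamma_t)>0$, and the lengths are bounded by some $L_{\max}$. Apply \cref{lem:parametric-smoothing} with $\epsilon$ and $\delta$ chosen so that
\[
  \frac{L_{\max}+\epsilon}{r_\Gamma-\delta}<\Lambda-\frac{\eta}{2}.
\]
This requires a slight refinement. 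Uniform additive bounds on length and on the lower thickness bound do not immediately control $\Len/\Thi$ slice by slice, because $\Thi(\Gamma_t)$ may exceed $r_\Gamma$. I would therefore first rescale slice-wise to thickness $1$; this is a continuous similarity normalization, which the lemma explicitly allows. Then $\Rop=\Len$, and the estimate $\Len/(1-\delta)+\epsilon$ gives the desired bound. The result is a smooth isotopy $\Gamma^{\mathrm{sm}}$, fixed on the endpoint collars, with $\sup_t\Rop<\Lambda-\eta/2$. Its endpoints have regular projections, because they are unchanged.

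\textbf{Genericity.} Next I would invoke relative multijet transversality for smooth one-parameter families $S^1\times[0,1]\to\mathbb R^3$, keeping the endpoint collars fixed. The nonregular projections form the following loci: tangents parallel to $u$ (cusp/R1), tangential double points (R2), triple points (R3), and the higher strata (quadruple points, degenerate tangencies, tangency at a crossing, simultaneous events). Each of these is a finite union of submanifolds of multijet space defined by conditions on $p_u$ applied to $1$-, $2$-, $3$- or $4$-jets of the family. The R1--R3 conditions have codimension exactly one more than the parameter-space dimension allows generically in the non-time directions, so a generic family meets them at isolated times and transversely. The higher strata have codimension at least one more than that, so a generic one-parameter family misses them entirely. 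Multijet transversality provides a $C^\infty$-small perturbation, supported away from the collars, achieving all of these simultaneously. The endpoints themselves are already regular, so the relative version applies.

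\textbf{Preserving admissibility.} The perturbation must keep the ropelength below $\Lambda$. It can be taken $C^2$-small uniformly in $t$. A $C^2$-small perturbation changes length by a small amount and changes the curvature radius by a small amount. It also perturbs the doubly-critical self-distance only slightly, by the same compactness argument as in \cref{lem:uniform-thickness-family}: embeddedness of the family together with the uniform local-graph estimate is an open condition under $C^2$ perturbation. Choosing the perturbation small relative to $\eta/2$ therefore keeps $\sup_t\Rop<\Lambda$. Since $\{t:\text{singular}\}$ is discrete in a compact interval, there are only finitely many singular times. At each of them exactly one transverse wall crossing occurs, and the local wall-crossing observation shows that it is a single R1, R2, or R3 move. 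Finally, transfer back to $\calX^{<}_{\Lambda,u}(K)$ via the normalization.

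\textbf{Main obstacle.} I expect the hardest step to be the continuity of thickness under $C^2$-small perturbations of a whole family. Reach is only upper semicontinuous in general, so the argument has to use the explicit formula for thickness as a minimum of the curvature radius and half the doubly-critical distance, together with uniform compactness in $t$, exactly as in the appendix proof of \cref{lem:parametric-smoothing}. I would try first to reduce this to that appendix argument applied to the perturbed family.
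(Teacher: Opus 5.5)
Your architecture matches the paper's: spend the slack, apply \cref{lem:parametric-smoothing} after a similarity normalization, use relative multijet transversality with a $C^2$-small perturbation that keeps curvature radius and doubly-critical distance bounded below, and conclude finiteness from discreteness. However, the normalization you chose breaks the key step.

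\textbf{The gap.} You rescale each slice to thickness one and call this a continuous similarity normalization. Nothing guarantees that $t\mapsto\Thi(\Gamma_t)$ is continuous along an admissible isotopy. Thickness is $\min\{\operatorname{MinRad},\frac12\operatorname{dcsd}\}$, and for a jointly $C^{1,1}$ family $\partial_{ss}\Gamma$ is only bounded. So the essential supremum of curvature can jump; for example, inserting an arc of strictly larger curvature whose length grows from $0$ is compatible with joint $C^{1,1}$ regularity. As you note yourself, reach is only upper semicontinuous. The remark \emph{Why strictness is uniform} deliberately avoids assuming continuity of reach for this reason. If the scale factor jumps, the rescaled family is not continuous. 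Then the hypothesis of \cref{lem:parametric-smoothing} that $\Gamma$ and $\partial_s\Gamma$ be continuous on the parameter cylinder fails, and the smoothing step is unjustified.

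\textbf{The repair.} Normalize by length instead, as the paper does. The function $L(t)=\Len(\Gamma_t)$ is continuous because $\partial_s\Gamma$ is. The family $\widehat\Gamma_t=L(t)^{-1}\Gamma_t$ satisfies $\Len(\widehat\Gamma_t)=1$ and $\Thi(\widehat\Gamma_t)=\Rop(\Gamma_t)^{-1}\ge M^{-1}$, where $M=\sup_t\Rop(\Gamma_t)<\Lambda$. The lemma's uniform additive bounds then give $\Len<1+\epsilon$ and $\Thi\ge M^{-1}-\delta$ on every slice. Hence $\Rop(\Gamma^{\mathrm{sm}}_t)\le(1+\epsilon)/(M^{-1}-\delta)<\Lambda$ for small $\epsilon,\delta$. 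Fixing the length is exactly what lets a uniform lower thickness bound control ropelength slice by slice, which is the issue you correctly identified. With this substitution, the rest of your argument goes through as in the paper.
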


\begin{proof}
Set
\[
 M=\sup_{t\in[0,1]}\Rop(\Gamma_t)<\Lambda.
\]
After a harmless reparametrization in time, make the family constant on
small endpoint collars.  Since the length $L(t)=\Len(\Gamma_t)$ is positive
and continuous, the time-dependent homothety
\[
 \widehat\Gamma_t=L(t)^{-1}\Gamma_t
\]
represents the same path in the projection-framed quotient and has
\[
 \Len(\widehat\Gamma_t)=1,
 \qquad
 \Thi(\widehat\Gamma_t)=\Rop(\Gamma_t)^{-1}\ge M^{-1}.
\]
The length-normalized family satisfies the hypotheses of
\cref{lem:parametric-smoothing}: use normalized arclength on each time slice,
as in the proof in \cref{app:parametric-smoothing-proof}.  Choose
$\epsilon>0$ and $0<\delta<M^{-1}$ so small that
\[
 \frac{1+\epsilon}{M^{-1}-\delta}<\Lambda.
\]
By \cref{lem:parametric-smoothing}, there is a relative smoothing of
$\widehat\Gamma$ satisfying
\[
 \Len(\Gamma^{\mathrm{sm}}_t)<1+\epsilon,
 \qquad
 \Thi(\Gamma^{\mathrm{sm}}_t)\ge M^{-1}-\delta.
\]
Consequently
\[
 \Rop(\Gamma^{\mathrm{sm}}_t)
 \le \frac{1+\epsilon}{M^{-1}-\delta}<\Lambda
\]
uniformly in $t$.

Apply relative multijet transversality to the smooth part of
$\Gamma^{\mathrm{sm}}$.  The perturbation may be taken arbitrarily small in
$C^2$ and fixed on the endpoint collars.  On a compact smooth family,
curvature radius and doubly-critical self-distance remain bounded below
under sufficiently small $C^2$ perturbations; this follows directly from
the thickness formula by the same compactness argument used in
\cref{app:parametric-smoothing-proof}.  Hence the strict ropelength
inequality is preserved.  The standard codimension count excludes the
higher projection discriminant and leaves only isolated cusp-type,
self-tangency, and triple-point events.  Their local models are respectively
the three Reidemeister moves; compare \cite{QueffelecReidemeister}.  The
exceptional set is a closed discrete subset of the compact parameter
interval and hence is finite.
\end{proof}

\begin{corollary}[Arbitrarily small relaxation of a closed-level path]
\label{cor:epsilon-genericity}
Let $\Gamma$ be a $\Lambda$-admissible isotopy with regular projected
endpoints.  For every $\varepsilon>0$, there is a projection-generic isotopy
with the same endpoints which is strictly
$(\Lambda+\varepsilon)$-admissible.
\end{corollary}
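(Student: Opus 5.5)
The plan is to reduce \cref{cor:epsilon-genericity} to \cref{prop:parametric-regularization} by a uniform rescaling trick. Since $\Gamma$ is only $\Lambda$-admissible, $M=\sup_t\Rop(\Gamma_t)$ may equal $\Lambda$, so the proposition does not apply directly at level $\Lambda$. It does apply at level $\Lambda+\varepsilon$. Because $\Rop(\Gamma_t)\le\Lambda$ for every $t$,
\[
 \sup_{t\in[0,1]}\Rop(\Gamma_t)\le\Lambda<\Lambda+\varepsilon,
\]
and hence $\Gamma$ is, verbatim, a strictly $(\Lambda+\varepsilon)$-admissible isotopy. The strictness condition in the definition is exactly the uniform inequality $\sup_t\Rop(\Gamma_t)<\Lambda+\varepsilon$, and this holds with slack at least $\varepsilon$.

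The steps, in order, are these.
\begin{enumerate}[label=(\arabic*)]
\item Check that the remaining hypotheses of \cref{prop:parametric-regularization} are inherited. The family is jointly $C^{1,1}$ (piecewise after a finite time subdivision), $|\partial_s\Gamma|$ is bounded below, and the time slices are positive-thickness embeddings of type $K$. All of this is part of the definition of admissibility and is independent of the level. The projected endpoints are regular by hypothesis.
\item Apply \cref{prop:parametric-regularization} with $\Lambda$ replaced by $\Lambda+\varepsilon$. This produces, relative to the endpoints, a projection-generic strictly $(\Lambda+\varepsilon)$-admissible isotopy with the same endpoints (up to the $\Sim_u^+$ normalization, which fixes endpoint classes).
\item Handle the piecewise case. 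If $\Gamma$ is only piecewise jointly $C^{1,1}$, apply the proposition separately on each subinterval. To make the junction diagrams regular, first slide each breakpoint slightly to a nearby time at which the projection is regular. I expect such times to be dense by the same multijet argument, or they can be arranged by inserting a tiny rigid rotation about an axis orthogonal to $u$ composed with its inverse, staying within the slack $\varepsilon$. Then concatenate the pieces. Collars are constant, so the concatenation is again admissible.
\end{enumerate}

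The main obstacle is step (3), the junctions. A breakpoint slice might have a non-regular projection, and the proposition requires regular endpoints on each piece. My first attempt is a small perturbation of the breakpoint slice alone, using the $C^2$-openness of the thickness bounds cited in the proof of \cref{prop:parametric-regularization}. I would join the perturbed slice to the original on both sides by short straight-line homotopies, which stay below $\Lambda+\varepsilon$ for a small enough perturbation by that same openness. After this modification every piece has regular endpoints and the argument above applies. Apart from this point, the corollary is just the observation that a closed bound $\le\Lambda$ gives strict slack at every larger level.
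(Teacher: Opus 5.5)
Your steps (1)--(2) are exactly the paper's proof. Since $\sup_t\Rop(\Gamma_t)\le\Lambda<\Lambda+\varepsilon$, the isotopy $\Gamma$ is already strictly $(\Lambda+\varepsilon)$-admissible, and \cref{prop:parametric-regularization} applies verbatim.

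Step (3) should be deleted rather than repaired. The proposition is stated for arbitrary strictly admissible isotopies, which by definition may be piecewise jointly $C^{1,1}$, and it needs regular projections only at $t=0,1$; its smoothing lemma uses only continuity of $\Gamma$ and $\partial_s\Gamma$, and this continuity persists across breakpoints. Your proposed junction fix would also not work as written. Moving a subdivision time, or inserting a rotation excursion, still leaves the original non-smooth time inside a piece. Moreover, regular-projection times need not be dense along a non-generic family.
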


\begin{proof}
The original isotopy is strictly $(\Lambda+\varepsilon)$-admissible, so
\cref{prop:parametric-regularization} applies.
\end{proof}

\begin{remark}[What is and is not proved at a closed critical level]
The preceding corollary does not assert that every path contained in the
closed set $\Rop\le\Lambda$ can be made generic while remaining in that same
closed set.  Such an assertion can fail for a general constrained functional
at a critical boundary level and is unnecessary here.  The strict-level
reconstruction and the arbitrarily small relaxation are exactly what is
needed for persistence infima and merge scales.
\end{remark}

\subsection{Finite-dimensional and polygonal bookkeeping}

\begin{proposition}[Finite-dimensional projection--Cerf bookkeeping]
\label{thm:finite-dimensional-cerf}
Let $B$ be a compact finite-dimensional family in
$\calX_{\Lambda,u}(K)$.  Suppose its projection discriminant has a locally
finite Whitney stratification and that a path in $B$ is transverse to the
standard codimension-one $R1$, $R2$, and $R3$ strata and avoids every other
stratum.  Then its projected diagrams change at finitely many times, by one
Reidemeister move at each such time.
\end{proposition}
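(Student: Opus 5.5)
The plan is to reduce the statement to a transversality count on the compact parameter interval, followed by the local wall-crossing model. Let $\gamma:[0,1]\to B$ be the given path. Since $B$ is a compact finite-dimensional family and the discriminant $\Delta:=\Delta_{\Lambda,u}(K)\cap B$ carries a locally finite Whitney stratification, only finitely many strata meet the compact image $\gamma([0,1])$. First we identify, by the hypothesis, the codimension-one strata of $\Delta\cap B$ met by $\gamma$ with the standard $R1$, $R2$, $R3$ strata, that is, with $\Delta^{R1}_{\Lambda,u}$, $\Delta^{R2}_{\Lambda,u}$, $\Delta^{R3}_{\Lambda,u}$ intersected with $B$. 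Every other stratum, and in particular every piece of the higher discriminant $\Delta^{\ge2}_{\Lambda,u}$, is avoided.

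Next we show that the set of exceptional times $T=\gamma^{-1}(\Delta)$ is finite. The argument is the following.
\begin{itemize}
\item Since $\gamma$ meets only the codimension-one strata $S$, and meets them transversely, each $t_0\in T$ has a neighborhood in $[0,1]$ in which $\gamma^{-1}(S)=\{t_0\}$. This is because a path transverse to a codimension-one submanifold in $B$ crosses it at isolated points.
\item By local finiteness, near $\gamma(t_0)$ only $S$ and strata in its closure occur.
\item Whitney's condition (b), or simply the frontier condition, places the strata in the closure of $S$ in higher codimension. These are avoided by hypothesis, so no other discriminant points accumulate at $t_0$.
\end{itemize}
Hence $T$ is discrete. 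The discriminant is closed in $B$, because the failure of regularity (a tangency, cusp or triple point) is a closed condition on $C^1$ data. So $T$ is closed, and a closed discrete subset of $[0,1]$ is finite.

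Third, on each component interval of $[0,1]\setminus T$ the path stays in the regular locus. Regular diagrams are stable under small $C^1$ perturbation: transverse double points persist and no new ones are created. So $t\mapsto \Pi_u(\gamma(t))$ is locally constant up to planar isotopy, hence constant on each interval. At each $t_0\in T$ the path crosses exactly one $R_i$ wall transversely and avoids the higher discriminant. The Local wall-crossing model observation then applies on a small window $(t_0-\eps,t_0+\eps)$. It shows that the diagrams just before and just after $t_0$ differ by exactly one Reidemeister move of type $R_i$. Concatenating over the finitely many times yields the claim.

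The main obstacle is the local finiteness and closedness step. Specifically, we must ensure that discriminant points of $\gamma$ cannot accumulate at a wall-crossing time through strata that $\gamma$ does not actually hit. I would handle this with the frontier condition of the Whitney stratification, which is part of the standard definition we assume. If it is not available, I would instead argue directly: near a point of a single generic codimension-one event, the local normal form (cusp, simple tangency, or transverse triple point) shows that the discriminant of $B$ is locally just that smooth hypersurface.
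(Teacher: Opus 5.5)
Your argument is correct and is essentially the paper's proof written out in more detail: transversality and local finiteness on the compact parameter interval give finitely many singular times, and the local wall-crossing model gives one Reidemeister move at each of them. One correction to your second and third bullets: by the frontier condition, the strata that approach a point of $S$ are those whose closure contains $S$ (so $S$ itself or strata of strictly larger dimension), not the strata in $\overline{S}\setminus S$, and $t_0$ is isolated for three reasons. No other codimension-one $R_i$ stratum can have $S$ in its frontier, for dimension reasons; every non-Reidemeister stratum is avoided by hypothesis; and transversality isolates $t_0$ in $\gamma^{-1}(S)$.
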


\begin{proof}
Transversality and local finiteness give a finite set of intersections on a
compact parameter interval.  The stipulated local model of each crossed
stratum gives the corresponding Reidemeister move.
\end{proof}

\begin{definition}[Polygonal thickness]
For an embedded closed polygon $P$ with fixed cyclic combinatorics, let
\[
  \Thi_{\operatorname{poly}}(P)
  =\min\{\operatorname{MinRad}(P),
  \operatorname{dcsd}_{\operatorname{poly}}(P)/2\}
\]
be Rawdon's polygonal thickness \cite{Rawdon}.
\end{definition}

\begin{lemma}[Semialgebraic polygonal sublevels]
\label{lem:polygonal-thickness-semialgebraic}
For a fixed number of vertices and fixed cyclic combinatorics, the conditions
$\Thi_{\operatorname{poly}}(P)\ge1$, $\Len(P)\le\Lambda$, nonzero edge
lengths, and embeddedness define a semialgebraic set in vertex coordinates.
\end{lemma}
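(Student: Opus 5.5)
The plan is to write each of the four conditions as a finite Boolean combination of polynomial equalities and inequalities in the vertex coordinates $(v_1,\dots,v_m)\in(\R^3)^m$. Indices are cyclic and the edges are $e_i=[v_i,v_{i+1}]$. The Tarski--Seidenberg theorem then lets us eliminate any auxiliary quantified variables. Nonzero edge lengths is simply $\|v_{i+1}-v_i\|^2>0$ for each $i$, which is polynomial.

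For the length condition, note that $\Len(P)=\sum_i\|v_{i+1}-v_i\|$ is not polynomial because of the square roots. I will introduce auxiliary variables $\ell_i\ge0$ with $\ell_i^2=\|v_{i+1}-v_i\|^2$ and $\sum_i\ell_i\le\Lambda$. I then project away the $\ell_i$; this keeps the set semialgebraic. Embeddedness means that nonadjacent closed edges are disjoint and adjacent edges meet only at their shared vertex. The condition that $e_i$ and $e_j$ intersect is the first-order formula $\exists s,t\in[0,1]:\ (1-s)v_i+sv_{i+1}=(1-t)v_j+tv_{j+1}$. Its negation, together with the analogous adjacent-edge condition (collinear backtracking excluded via $s,t$ away from the common endpoint), is semialgebraic by quantifier elimination.

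The main step is $\Thi_{\operatorname{poly}}(P)\ge1$, which splits as $\operatorname{MinRad}(P)\ge1$ and $\operatorname{dcsd}_{\operatorname{poly}}(P)\ge2$. Rawdon's $\operatorname{MinRad}$ is a minimum, over vertices, of an explicit expression of the form $\frac{\min(|e_{i-1}|,|e_i|)}{2\tan(\theta_i/2)}$, where $\theta_i$ is the turning angle. Using $\tan(\theta/2)=\sin\theta/(1+\cos\theta)$, we have $\cos\theta_i=\langle e_{i-1},e_i\rangle/(|e_{i-1}||e_i|)$ and $\sin\theta_i=|e_{i-1}\times e_i|/(|e_{i-1}||e_i|)$. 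With the auxiliary lengths $\ell_i$, the inequality "local radius $\ge1$" becomes, after clearing positive denominators, a polynomial inequality in $v$ and $\ell$. The case $\theta_i=0$ is handled by the convention radius $=\infty$, which is again a polynomial condition. A minimum being $\ge1$ is the conjunction of the individual inequalities.

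For $\operatorname{dcsd}_{\operatorname{poly}}$, I will use its definition as the minimum distance over doubly critical pairs $(x,y)\in P\times P$ (in Rawdon's polygonal sense, via normal cones at vertices and orthogonality on edge interiors). The condition $\operatorname{dcsd}\ge2$ is then the sentence that for all $x\in e_i$, $y\in e_j$ with parameters $s,t$, if $(x,y)$ is doubly critical then $\|x-y\|^2\ge4$. Double criticality at an edge interior is $\langle x-y,e_i\rangle=0$. At a vertex it is a condition of the form $\langle x-y,e_{i-1}\rangle\ge0\ge\langle x-y,e_i\rangle$ or its reverse, which is polynomial. Pairs that are too close along the polygon are excluded as in Rawdon's definition, by a polynomial constraint on the parameters. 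This gives a universally quantified polynomial formula, so the set is semialgebraic by Tarski--Seidenberg. The intersection of these finitely many semialgebraic sets is semialgebraic.

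I expect the main obstacle to be faithfully encoding Rawdon's precise definitions, especially the vertex normal-cone criticality and the adjacency exclusions in $\operatorname{dcsd}_{\operatorname{poly}}$. The semialgebraicity argument itself is routine once each piece is first-order over the reals.
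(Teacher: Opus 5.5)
Your proposal is correct and follows essentially the same route as the paper's proof: auxiliary edge-length variables remove the square roots and let you clear denominators in Rawdon's minimum-radius condition, the doubly-critical-distance and disjointness conditions are written as first-order formulas in finitely many edge parameters, and Tarski--Seidenberg quantifier elimination projects back to vertex coordinates. Your version is just more explicit than the paper's. One small imprecision: your local-radius inequality still contains $|e_{i-1}\times e_i|$ and a $\min$. These become polynomial after squaring (both sides are nonnegative) and a case split, so the argument is unaffected.
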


\begin{proof}
There are finitely many vertex and edge-pair conditions.  The minimum-radius
conditions are algebraic after introducing edge-length variables and clearing
denominators on the nonzero-edge locus.  Critical-distance and disjointness
conditions are first-order semialgebraic conditions in finitely many edge
parameters, and quantifier elimination gives a semialgebraic set in the
vertex coordinates.
\end{proof}

\begin{proposition}[Polygonal projection bookkeeping]
\label{thm:polygonal-cerf}
A compact semialgebraic polygonal sublevel has a finite semialgebraic Whitney
stratification of its projection discriminant.  Any semialgebraic path
transverse to the strata representing the standard PL Reidemeister events
and avoiding all remaining strata has finitely many singular times and
undergoes one PL Reidemeister move at each such time.
\end{proposition}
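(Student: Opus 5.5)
The plan is to reduce the statement to two standard facts from real algebraic geometry: semialgebraic sets admit finite Whitney stratifications, and definable paths meet finitely many strata finitely often. The first step is to realize the sublevel inside a fixed semialgebraic configuration space. Fix the number $m$ of vertices and the cyclic combinatorics. By \cref{lem:polygonal-thickness-semialgebraic}, the set of polygons with $\Thi_{\operatorname{poly}}(P)\ge1$, $\Len(P)\le\Lambda$, nonzero edges and embeddedness is semialgebraic in $(\R^3)^m$. We work in this thickness-one normalization, modulo translation and rotation about $u$; the quotient can be taken semialgebraic by fixing a slice, e.g.\ first vertex at the origin. Compactness comes from the length bound together with this normalization.

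Next, describe the projection discriminant $\Delta$ as a finite union of semialgebraic conditions. The projection $p_u$ is linear, so each condition is first-order in the vertex coordinates and the edge parameters, and quantifier elimination makes $\Delta$ semialgebraic. In the PL setting there are finitely many non-regular events for a pair or triple of projected edges or vertices:
\begin{itemize}
\item a projected vertex lying on a nonadjacent projected edge;
\item two projected edges overlapping collinearly;
\item three projected edges through a common point;
\item a projected edge degenerating to a point, i.e.\ an edge parallel to $u$.
\end{itemize}
The strata representing the standard PL Reidemeister events are the codimension-one pieces: a vertex passing over an edge, which gives the PL $R2$ or $R1$ (triangle) move, and a triple point, which gives $R3$. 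Everything else lies in higher codimension. We then apply the semialgebraic Whitney stratification theorem (Łojasiewicz, Hironaka; see Bochnak--Coste--Roy) to the compact semialgebraic set, refined so that $\Delta$ and each of the finitely many event types is a union of strata. This gives finitely many strata.

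For the path statement, let $\gamma:[0,1]\to$ sublevel be semialgebraic. Each preimage $\gamma^{-1}(S)$ is a semialgebraic subset of $[0,1]$, hence a finite union of points and intervals. Transversality to a codimension-one stratum rules out intervals, since the path cannot lie inside a codimension-one stratum while being transverse to it. Avoidance of the remaining strata leaves only finitely many points, so there are finitely many singular times. Off these times the projection is regular, and by local constancy on the regular locus the diagram type is constant on each complementary interval.

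The final step, which I expect to be the main obstacle, is identifying the local PL move at each transverse crossing. For a vertex-over-edge crossing I would check, in a small neighbourhood using the transversality sign, that the number of crossings changes by the PL $R2$ pattern. When the two edges involved are adjacent, the event is instead a triangle collapse, the PL $R1$ move. For a triple point, the three sheets at distinct heights interchange, which is PL $R3$. The care needed is in listing the degenerate coincidences (an edge parallel to $u$, a vertex projecting onto a vertex) and showing they form strata of codimension at least two, so they are excluded by the avoidance hypothesis. The argument mirrors \cref{thm:finite-dimensional-cerf}, with the Whitney stratification now supplied by semialgebraicity rather than assumed.
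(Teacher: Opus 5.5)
Your stratification and finiteness steps follow the paper's proof. The paper likewise uses a finite semialgebraic Whitney stratification compatible with the finitely many event types, excludes intervals from the preimages by transversality and the remaining strata by avoidance, and notes that the diagram is constant between singular times. Your point that the preimage of each stratum under a semialgebraic path is a finite union of points and intervals makes the finiteness step more explicit than the paper's one-line version.

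The problem is your final step. The paper does not derive the local model. It treats it as part of what ``strata representing the standard PL Reidemeister events'' means, and it explicitly declines to claim that every codimension-one stratum of a constrained polygon family is a Reidemeister stratum. You try to prove that identification, and your two key claims are false: that everything other than vertex-over-edge and triple-point walls has higher codimension, and that every vertex-over-edge crossing gives $R2$ or $R1$.

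Here is a counterexample.
\begin{enumerate}[label=(\roman*)]
\item Let $p_u(v_j)$ cross the interior of $p_u(e_i)$, where $e_i$ is adjacent to neither $e_{j-1}$ nor $e_j$. This is a codimension-one event.
\item Suppose $p_u(v_{j-1})$ and $p_u(v_{j+1})$ lie on opposite sides of the line through $p_u(e_i)$.
\item Just before the event, exactly one of $e_{j-1},e_j$ crosses $e_i$ near $v_j$. Just after, exactly the other one does.
\item The over/under information is the same before and after, because the strands stay spatially disjoint.
\end{enumerate}
So the diagram changes only by a planar isotopy, and your planned check that the crossings change by the $R2$ pattern fails here. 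Only the same-side configuration gives $R2$, and the fold of two adjacent projected edges gives $R1$.

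There is a second issue. Codimension counts made in $(\R^3)^m$ do not automatically apply to strata in the boundary $\{\Thi_{\operatorname{poly}}=1\}\cup\{\Len=\Lambda\}$ of the constrained sublevel. This is exactly the paper's caveat.

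The repair is to drop the classification. You can either count these straddling walls among the ``remaining strata'' that the hypothesis requires the path to avoid. Or you can exclude them from the discriminant, since the double point there is topologically transverse and the diagram is unchanged across them. In either case, take the standard local model at each Reidemeister stratum as part of the hypothesis; then your first steps already prove the statement, as in the paper.

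A minor point: compactness is a hypothesis of the statement and does not follow from the length bound and normalization. Rawdon's minimum radius gives no lower bound on edge lengths at a vertex with zero turning angle, so edges can shrink to zero within the set.
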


\begin{proof}
Semialgebraic sets admit finite Whitney stratifications.  A transverse
semialgebraic path has finite intersection with the relevant strata on a
compact interval.  The conclusion follows from the assumed standard local
model at each crossed stratum.  Notice that this proposition does not claim,
without an additional general-position argument, that every codimension-one
stratum in an arbitrary constrained polygon family is automatically a
Reidemeister stratum.
\end{proof}

\begin{theorem}[Strict-sublevel component reconstruction]
\label{thm:strict-component-reconstruction}
For every $\Lambda>\Rop(K)$,
\[
  \pi_0\bigl(\calG^{<,\operatorname{lift}}_{\Lambda,u}(K)\bigr)
  \cong
  \pi_0^{\operatorname{adm}}\bigl(\calX^{<}_{\Lambda,u}(K)\bigr),
\]
where the right-hand side denotes strict admissible components.
\end{theorem}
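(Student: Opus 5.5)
The plan is to construct an explicit bijection between the two sets of components. The natural map sends a vertex $(D,F)$ of $\calG^{<,\operatorname{lift}}_{\Lambda,u}(K)$ to the strict admissible component containing the fiber component $F$.

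\emph{Step 1: the map is well defined.} Points of one strict fiber component are joined by a strictly admissible path. Its endpoints therefore lie in one strict admissible component of $\calX^{<}_{\Lambda,u}(K)$.

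\emph{Step 2: the map descends to graph components.} Each lifted edge $(e,F,F')$ is witnessed by a strictly $\Lambda$-admissible projection-generic movie from a point of $F$ to a point of $F'$. Its endpoints thus lie in a common admissible component. It follows that the map factors through
\[
\pi_0\bigl(\calG^{<,\operatorname{lift}}_{\Lambda,u}(K)\bigr)\to\pi_0^{\operatorname{adm}}\bigl(\calX^{<}_{\Lambda,u}(K)\bigr).
\]

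\emph{Step 3: surjectivity.} Take any $x\in\calX^{<}_{\Lambda,u}(K)$ with $\Rop(x)<\Lambda$. Its projection may fail to be regular, so I would apply a small perturbation: the smoothing of \cref{lem:parametric-smoothing} on a constant family, followed by a small $C^2$ multijet-transversal perturbation. This yields a representative with regular projection. Since $\Rop(x)<\Lambda$ leaves slack, the perturbation can be chosen to keep ropelength below $\Lambda$. The straight-line homotopy between $x$ and its perturbation remains inside the strict sublevel, because thickness and length vary continuously under small $C^2$ changes of a smooth curve. The perturbed point therefore lies in the same admissible component as $x$ and in some fiber component over a regular diagram.

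\emph{Step 4: injectivity.} Suppose $(D,F)$ and $(D',F')$ have points $x\in F$ and $x'\in F'$ joined by a strictly admissible isotopy $\Gamma$. Its endpoints have regular projections, so \cref{prop:parametric-regularization} replaces $\Gamma$, relative to the endpoints, by a projection-generic strictly $\Lambda$-admissible isotopy. This isotopy has finitely many singular times $t_1<\dots<t_k$.
\begin{itemize}[leftmargin=*]
\item On each open interval between singular times, the path stays in one fiber of a regular diagram $D_i$, hence in one strict fiber component $F_i$. Here I use that a strict sub-path of a strictly admissible path is strictly admissible.
\item Across $t_i$, a short sub-movie realizes one Reidemeister move. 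This gives the lifted edge $(e_i,F_{i-1},F_i)$ of $\calG^{<,\operatorname{lift}}_{\Lambda,u}(K)$.
\end{itemize}
Concatenating these edges produces a path in the graph from $(D,F)$ to $(D',F')$.

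One bookkeeping point concerns the quotient by $\Sim_u^+(\mathbb R^3)$. Endpoint identifications via similarities are absorbed by continuous similarity paths, since $\Sim_u^+(\mathbb R^3)$ is connected. These paths preserve ropelength and change projections only by planar isotopy.

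\emph{Main obstacle.} I expect the real difficulty to be the perturbation steps (Steps 3 and 4): making projections generic while preserving strict admissibility and the joint $C^{1,1}$ regularity. For this I rely entirely on \cref{lem:parametric-smoothing} and \cref{prop:parametric-regularization}, whose uniform slack argument is exactly what is needed.

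A secondary subtlety is in Step 4. One must check that the pieces of the movie near each $t_i$ start and end at regular parameters lying in the chosen fiber components. This is handled by choosing intermediate regular times $s_i\in(t_i,t_{i+1})$ and cutting the movie there.
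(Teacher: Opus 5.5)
Your proposal is correct and is essentially the paper's argument written out in more detail: every strict admissible component is shown to meet the regular projection locus, \cref{prop:parametric-regularization} turns a strictly admissible isotopy with regular endpoints into a finite chain of lifted vertices and edges, and conversely edge movies concatenate. The only real difference is how a single representative is made projection-regular. The paper smooths via \cref{lem:parametric-smoothing} and then applies a small generic rotation joined to the identity, which preserves length and thickness exactly, whereas your multijet perturbation needs a thickness-stability estimate. Note also that your justification of the straight-line homotopy (``small $C^2$ changes of a smooth curve'') does not literally cover the leg starting at the merely $C^{1,1}$ curve $x$; the isotopy produced by \cref{lem:parametric-smoothing} on the constant family supplies exactly that leg inside the strict sublevel.
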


\begin{proof}
Every strict admissible component meets the regular projection locus: first
smooth one representative within the component, using
\cref{lem:parametric-smoothing} for a constant family together with the
ropelength slack, and then apply a small generic spatial rotation, joined to
the identity through rotations; for a smooth curve almost every direction
gives a regular projection, and rotations change neither length nor
thickness.  By \cref{prop:parametric-regularization}, every
strict admissible isotopy with regular endpoints can then be replaced,
without leaving the strict sublevel, by a projection-generic one.  Such an isotopy gives a finite sequence of lifted
vertices and lifted Reidemeister edges.  Conversely, every lifted edge is
represented by a strict admissible path segment, and graph paths concatenate
these segments.  Hence the two connectedness relations agree.
\end{proof}

\begin{definition}[Right-relaxed closed-level equivalence]
For regular points $x,y\in\calX_{\Lambda,u}(K)$, write
$x\sim_{\Lambda+}y$ if, for every $\varepsilon>0$, their images lie in the
same strict admissible component of
$\calX^{<}_{\Lambda+\varepsilon,u}(K)$.  By
\cref{thm:strict-component-reconstruction}, this is equivalent to requiring
connectivity of the corresponding lifted vertices in
$\calG^{<,\operatorname{lift}}_{\Lambda+\varepsilon,u}(K)$ for every
$\varepsilon>0$.
\end{definition}


\section{Growth invariants}
\label{sec:growth-invariants}

The monotone diagram filtration $\calH_{\Lambda,u}(K)$ and the lifted
persistence system carry complementary numerical data.

\begin{definition}[Diagram and edge birth scales]
For $D\in G_S(K)$ and a classical Reidemeister edge $e$ of $G_S(K)$, set
\[
  \beta_u(D)=\inf\{\Lambda\mid D\in V(\calH_{\Lambda,u}(K))\},
\]
\[
  \beta_u(e)=\inf\{\Lambda\mid e\in E(\calH_{\Lambda,u}(K))\}.
\]
These are well defined as extended real numbers because $\calH$ is
monotone.  They avoid the ambiguity of assigning a birth time to a
level-dependent projection-fiber component.
\end{definition}

\begin{definition}[Regular ideal bases]
Let $A_{\Lambda,u}(K)$ be the set of vertices of
$\calG_{\Lambda,u}^{\operatorname{lift}}(K)$ whose fiber component contains
the image of a regular-projection point of $I_u(K)$.  For
$\Lambda>\Rop(K)$, define $A^{<}_{\Lambda,u}(K)$ analogously in
$\calG^{<,\operatorname{lift}}_{\Lambda,u}(K)$.  Every ideal component
contains regular points, because a generic spatial rotation preserves
ropelength and gives a regular projection.  By
\cref{cor:epsilon-genericity,thm:strict-component-reconstruction}, regular
points in one ideal admissible component determine vertices in one component
of the strict lifted graph at every level strictly above the ideal level.
\end{definition}

\begin{definition}[Reidemeister radius and diameter]
Define
\[
  \rad_{\Lambda,u}(K)
  =\sup_{v}d_{\calG_{\Lambda,u}^{\operatorname{lift}}}
  \bigl(A_{\Lambda,u}(K),v\bigr)
  \in\mathbb N\cup\{\infty\},
\]
and let $\diam_{\Lambda,u}(K)$ be the extended graph diameter of
$\calG_{\Lambda,u}^{\operatorname{lift}}(K)$.  Distances between different
components are $\infty$.
\end{definition}

The diameter need not be monotone because newly born edges can create
shortcuts.  The birth scales in $\calH$, by contrast, are attached to a
literal increasing filtration.

\begin{definition}[Crossing profile of a finite window]
For a finite set $W\subset V(\calH_{\Lambda,u}(K))$, define
\[
  C_{W,u}(n;K)=\#\{D\in W\mid \crn(D)=n\}.
\]
This is a statistic of the specified window; no finiteness of the whole
sublevel graph is assumed.
\end{definition}


\section{Finite recognition length}
\label{sec:finite-recognition}

Finite recognition is formulated in the monotone diagram-image filtration
$\calH_{\Lambda,u}(K)$.  Diagram labels used to construct $\calH$ are
forgotten when certificates are compared.

\begin{definition}[Visible saturated certificate]
Let $Q=\mathbb B_R(D)$ be a BC-characteristic certificate in $G_S(K)$.  We
say that $Q$ is visible at level $\Lambda$ if the full finite decorated
multigraph $Q$ is contained in $\calH_{\Lambda,u}(K)$.  Equivalently, all
of its vertices and all of its Reidemeister edges, with multiplicities and
types, have bounded-ropelength realizations by level $\Lambda$.

An abstract occurrence of $Q$ in another filtered graph is called
\emph{saturated} when the ambient valence labels are preserved and every
vertex at distance less than $R$ has filtered degree equal to that label.
Then the occurrence is a full radius-$R$ ball in the corresponding completed
Reidemeister multigraph.
\end{definition}

\begin{definition}[Pattern visibility and finite recognition length]
For a BC-characteristic certificate $Q$ for $K$, set
\[
  L_u(K;Q)=
  \inf\{\Lambda\ge\Rop(K)\mid Q\subset\calH_{\Lambda,u}(K)\}.
\]
Define
\[
  L_{\operatorname{char},u}(K)=\inf_Q L_u(K;Q),
\]
where $Q$ ranges over all BC-characteristic certificates for $K$.
\end{definition}

Because $\calH$ is monotone, visibility persists at every larger level.  The
infimum need not be attained, so ``recognition length'' means a threshold
value rather than necessarily a first realized level.

\begin{lemma}[Finite spatial realization of a Reidemeister edge]
\label{lem:finite-edge-realization}
Every classical Reidemeister edge $e:D\to D'$ has a smooth spatial movie
whose projection is regular except at one standard singular time realizing
$e$.  The movie has finite maximal ropelength.
\end{lemma}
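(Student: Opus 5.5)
The plan is to build the movie explicitly in three stages: a smooth spatial lift of $D$, a standard local Reidemeister model inserted inside a small disk, and a compactness argument that bounds the ropelength. First, realize $D$ spatially. Choose a smooth immersed planar curve in $u^\perp$ representing the projection of $D$, pulled back from $S^2$ by removing a point in a face away from the move. Then choose a smooth height function $h$ along the curve. It should be locally constant near each crossing, take distinct values on the over- and under-strand as the crossing data prescribe, and interpolate smoothly in between. The lifted curve $\gamma_0=(c(s),h(s))$ is a smooth embedding, so its thickness is positive (curvature is bounded, and the doubly-critical self-distance is positive by embeddedness and compactness). Its projection is exactly $D$.

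Second, localize the move. A classical edge $e$ is supported in a disk $B\subset u^\perp$ meeting $D$ in one, two, or three arcs, in the standard configuration of $R1$, $R2$, or $R3$. Outside $B$ the diagrams $D$ and $D'$ agree up to planar isotopy. After a planar isotopy of $D$, lifted to the spatial curve by keeping heights and hence preserving embeddedness and smoothness, we may assume $B$ is a round disk containing exactly the standard local picture. Inside the cylinder $B\times\mathbb R$ we replace the strands by an explicit smooth one-parameter family agreeing with $\gamma_0$ near $\partial B\times\mathbb R$:
\begin{itemize}[leftmargin=*]
\item for $R2$, push one strand, at a fixed height above the other, across it by a family of translates of a bump profile;
\item for $R3$, keep three strands at three separated constant heights and translate the middle one across the crossing of the others;
\item for $R1$, use the standard family of planar curves passing through a cusp, with a height function that is strictly monotone along the small loop, so that the spatial curve stays embedded.
\end{itemize}
In each case the strands are separated in height wherever their projections meet, so every time slice is a smooth embedding. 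Genericity is checked by direct inspection of the explicit model: the projection is regular except at a single time, with exactly one transverse cusp, tangency, or triple point. The result is a movie $\Gamma:S^1\times[0,1]\to\R^3$ from a lift of $D$ to a lift of $D'$ realizing $e$.

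Third, bound the ropelength. The family is jointly smooth, $|\partial_s\Gamma|$ is bounded below, and all slices are embedded. So \cref{lem:uniform-thickness-family} gives $\inf_t\Thi(\Gamma_t)>0$, while $\sup_t\Len(\Gamma_t)<\infty$ by compactness. Hence $\sup_t\Rop(\Gamma_t)<\infty$.

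The main obstacle is the $R1$ model. Its projection passes through a cusp, so the planar curve is not immersed at the singular time, and one must make sure the spatial curve nevertheless stays a regular embedding with curvature bounded across that time. I would handle this by taking the spatial family directly, for instance $s\mapsto(s^3-ts,\ s^2,\ s)$ locally, with coordinates in a frame whose third axis is $u$ so that the last coordinate is the height. Its derivative $(3s^2-t,\,2s,\,1)$ never vanishes, it is smooth in $(s,t)$, and its projection to the first two coordinates undergoes the cusp transition. A cutoff then glues it to the straight strand near $\partial B$. With this choice the uniform-thickness lemma applies verbatim.
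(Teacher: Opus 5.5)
Your proposal is correct and follows essentially the same route as the paper. You lift $D$ by a height function, lift planar isotopies with heights fixed, insert a standard spatial $R1$/$R2$/$R3$ model in a small cylinder while the rest of the curve stays fixed, and conclude by compactness that thickness is bounded below and length bounded above along the movie. The differences are minor. You get the thickness bound by citing \cref{lem:uniform-thickness-family}, whereas the paper appeals directly to strand separation and a common normal-tube radius. Your explicit family $(s^3-ts,\,s^2,\,s)$ also makes concrete a point the paper only asserts: the spatial curve stays an embedding with bounded curvature through the cusp time.
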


\begin{proof}
Choose planar representatives of $D$ and $D'$ exhibiting the prescribed
local move in a disk.  Lift crossings by small, mutually separated height
functions and realize the standard Reidemeister movie in a small spatial
ball; planar isotopies before and after the move are lifted through regular
projections.  Choose the local spatial model with uniform strand separation and
bounded curvature, and keep the complementary part of the curve fixed with a
positive tubular clearance from the move ball.  The resulting compact movie
then has a common embedded normal-tube radius and a uniform length bound.
Consequently its thickness is bounded below by a positive constant and its
ropelength has a finite maximum.
\end{proof}

\begin{theorem}[Finite visibility theorem]
\label{thm:finite-visibility}
Let $Q$ be any finite submultigraph of $G_S(K)$, with specified
Reidemeister edges.  Then there exists $\Lambda_Q<\infty$ such that
\[
  Q\subset\calH_{\Lambda_Q,u}(K).
\]
\end{theorem}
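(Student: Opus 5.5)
The plan is to reduce the theorem to finitely many independent realizations and take a maximum, exploiting the fact that $\calH_{\Lambda,u}(K)$ is a monotone increasing family of submultigraphs of $G_S(K)$. Since $Q$ is finite, it has finitely many vertices $D_1,\dots,D_m$ and finitely many specified edges $e_1,\dots,e_k$, counted with multiplicity as distinct classical moves. By \cref{def:diagram-image}, a vertex $D$ lies in $\calH_{\Lambda,u}(K)$ as soon as some representative of ropelength at most $\Lambda$ projects regularly to $D$. An edge $e$ lies in $\calH_{\Lambda,u}(K)$ as soon as some $\Lambda$-admissible projection-generic movie realizes that specific move. By monotonicity, it therefore suffices to find, for each vertex and each edge separately, a finite level at which it appears. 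We then set $\Lambda_Q$ to be the maximum of these finitely many levels, together with $\Rop(K)$.

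For edges I will invoke \cref{lem:finite-edge-realization} directly. Each $e_j:D\to D'$ has a smooth spatial movie whose projection is regular except at one standard singular time realizing $e_j$, and this movie has finite maximal ropelength $\lambda_j$. Several checks are needed to see that this movie is $\lambda_j$-admissible and projection-generic in the sense of \cref{sec:projection}:
\begin{enumerate}[label=(\roman*)]
\item smoothness gives joint $C^{1,1}$ regularity and a nonvanishing speed after constant-speed reparametrization of the slices;
\item each slice is embedded, has positive thickness, and represents $K$, since the movie is an isotopy starting from a lift of a diagram of $K$;
\item the unique singular time is a single transverse crossing of the wall of the type of $e_j$, which avoids the higher discriminant.
\end{enumerate}
If transversality at the wall is not literally supplied by the local model, I would perturb relative to the endpoints. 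Using \cref{prop:parametric-regularization} at level $\lambda_j+1$ costs nothing, because only finiteness of the level matters. The endpoints of the movie are regular representatives of $D$ and $D'$, so the same movie also witnesses that both endpoint diagrams appear by level $\lambda_j$.

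For vertices not incident to any specified edge (isolated vertices of $Q$), I will use the argument from the proof of \cref{lem:finite-edge-realization} without the move. Lift the diagram $D_i$ by a height function that is constant except near crossings, with mutually separated bumps. Then smooth the planar curve so that it has bounded curvature and its non-crossing strands have positive clearance. This produces a smooth embedded curve of positive reach projecting regularly to $D_i$ in direction $u$, and its finite ropelength is $\mu_i$. We then set
\[
\Lambda_Q=\max\bigl(\Rop(K),\max_i\mu_i,\max_j(\lambda_j+1)\bigr)<\infty .
\]

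The main obstacle is the edge step: making sure that the realization produced by \cref{lem:finite-edge-realization} meets the precise genericity and admissibility requirements in the definition of lifted edges, and that it realizes the \emph{specified} move $e_j$ rather than merely some move between the same two diagrams. Parallel edges of $Q$ are distinct classical moves, so each one needs its own movie, localized in the disk where that particular move occurs. I would handle this by building the movie with the local move disk chosen to be exactly the disk of $e_j$, identified up to diagram isotopy. Any residual nongenericity would be removed by the relative regularization applied at a slightly raised level.
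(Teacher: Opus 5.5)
Your proof is correct and follows the paper's argument: choose a positive-thickness representative for each vertex, realize each specified edge by the finite-ropelength movie of \cref{lem:finite-edge-realization}, and take the maximum of these finitely many ropelengths, using monotonicity of $\calH_{\Lambda,u}(K)$. Your conditional fallback via \cref{prop:parametric-regularization} is not needed, because the lemma's standard local model already crosses a single wall transversely; it would also not suffice on its own, since a generic replacement relative to the endpoints only yields some chain of moves from $D$ to $D'$, not necessarily the single specified edge $e_j$.
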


\begin{proof}
Choose one positive-thickness spatial representative for every vertex of
$Q$.  For each of the finitely many edges, apply
\cref{lem:finite-edge-realization}.  Let $\Lambda_Q$ be the maximum of the
ropelengths of the chosen vertex representatives and of all curves occurring
in the finitely many edge movies.  Every vertex and every specified edge of
$Q$ then belongs to $\calH_{\Lambda_Q,u}(K)$.
\end{proof}

\begin{lemma}[Planar type is constant along wall-free movies]
\label{lem:planar-type}
A regular projected curve in $u^\perp$ has a planar combinatorial type,
namely its equivalence class under orientation-preserving ambient isotopy
of the plane.  Under the one-point compactification, the planar type
refines the spherical diagram class by the choice of the face containing
$\infty$.  Along a continuous family of curves with regular projections,
the planar type is locally constant.  Consequently every fiber component of
$F_{\Lambda,u}(D)$ has a single well-defined planar type refining $D$, and
a movie whose projection remains regular cannot change this type.
\end{lemma}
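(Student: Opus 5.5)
The plan is to prove the four assertions of the lemma in order, each reducing to elementary facts about regular plane curves.

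\emph{Planar type.} A regular projected curve is a closed immersed curve in $u^\perp$ with finitely many transverse double points and crossing data. Its orientation-preserving ambient isotopy class in the plane is therefore a well-defined combinatorial object. It is encoded by the embedded $4$-valent plane graph together with the over/under labels and the rotation system at each vertex, and the plane embedding is determined up to isotopy by the cellular structure including the unbounded face.

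\emph{Refinement of the spherical class.} Under $u^\perp\cup\{\infty\}\cong S^2$, an orientation-preserving plane isotopy extends to an orientation-preserving isotopy of $S^2$ fixing $\infty$. Hence planar type determines the spherical diagram $D$. Conversely, two plane diagrams with the same spherical class differ by an isotopy of $S^2$, and these can be normalized to fix $\infty$ exactly when the faces containing $\infty$ correspond. This uses that the group of orientation-preserving homeomorphisms of $S^2$ preserving the diagram acts on faces, together with the Alexander trick in a face disk. So planar type equals spherical class together with the choice of outer face, up to diagram symmetries.

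\emph{Local constancy.} This is the step requiring the most care. Let $c_t$ be a continuous family (in the $C^1$ sense inherited from the $C^{1,1}$ topology) of curves whose projections $p_u\circ c_t$ are all regular. Fix $t_0$. Because the transverse double points of $p_u\circ c_{t_0}$ are isolated and stable, the implicit function theorem applied to $(s,s')\mapsto p_u c_t(s)-p_u c_t(s')$ off a neighborhood of the diagonal makes them persist and move continuously for $t$ near $t_0$. Near the diagonal, immersion excludes new double points for $t$ close to $t_0$, by a uniform $C^1$ estimate. Away from these neighborhoods, compactness and positivity of $|p_u c_t(s)-p_u c_t(s')|$ exclude new double points. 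Hence the double-point set varies by a continuous bijection, and the height order at each crossing is preserved, since the heights differ by a nonzero continuous quantity.

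One then builds an ambient isotopy of the plane carrying $p_u c_{t_0}$ to $p_u c_t$, using the isotopy extension theorem for the family of embedded plane graphs. Alternatively, and more simply, the embedded graphs share the same rotation system and the same outer face, since the unbounded face is characterized continuously by containing points of large norm. This gives equal planar type by the classification of plane graph embeddings.

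\emph{Consequences.} A locally constant function on a path-connected set is constant. Apply this to a path within a fiber component of $F_{\Lambda,u}(D)$, whose points all have regular projection $D$. The argument above applies to continuous paths of representatives; one lifts paths in the $\Sim_u^+$ quotient and uses that $\Sim_u^+$ preserves planar type. This shows the component has a single planar type refining $D$. The same constancy applied to a movie with everywhere-regular projection shows the movie cannot change the type.
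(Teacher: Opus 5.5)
Your proposal is correct and follows essentially the same route as the paper. The paper proves local constancy by asserting that a regular plane diagram has a $C^1$-neighborhood of planar-isotopic regular diagrams, and it proves the refinement by noting that planar pictures of one spherical diagram are planar isotopic exactly when their outer faces correspond; your double-point tracking, rotation-system and outer-face arguments simply unpack that stability claim. One small simplification: you need not lift paths through the $\Sim_u^+$-quotient, which is not automatic for a non-free action, because the orbit map is open and $\Sim_u^+$ preserves planar type, so local constancy holds directly on $\calX_{\Lambda,u}(K)$.
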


\begin{proof}
A regular plane diagram has a $C^1$-neighborhood consisting of regular
diagrams ambient isotopic to it in the plane, so the planar type is locally
constant along continuous regular families, hence constant on path
components of fibers and along wall-free movies.  For the refinement
statement, two planar pictures of the same spherical diagram are planar
ambient isotopic exactly when their distinguished outer faces correspond.
\end{proof}

\begin{figure}[t]
\centering
\begin{tikzpicture}[every node/.style={font=\scriptsize}]
\begin{scope}
\draw[line width=0.9pt] plot [smooth cycle, tension=0.7] coordinates
{(1.022,0.000) (0.990,0.271) (0.897,0.527) (0.749,0.752) (0.555,0.934)
 (0.328,1.065) (0.083,1.137) (-0.165,1.149) (-0.402,1.103) (-0.613,1.006)
 (-0.787,0.868) (-0.916,0.699) (-0.994,0.514) (-1.022,0.328) (-1.002,0.152)
 (-0.940,0.000) (-0.847,-0.119) (-0.734,-0.199) (-0.613,-0.238)
 (-0.498,-0.235) (-0.400,-0.197) (-0.328,-0.130) (-0.291,-0.045)
 (-0.291,0.045) (-0.328,0.130) (-0.400,0.197) (-0.498,0.235) (-0.613,0.238)
 (-0.734,0.199) (-0.847,0.119) (-0.940,0.000) (-1.002,-0.152)
 (-1.022,-0.328) (-0.994,-0.514) (-0.916,-0.699) (-0.787,-0.868)
 (-0.613,-1.006) (-0.402,-1.103) (-0.165,-1.149) (0.083,-1.137)
 (0.328,-1.065) (0.555,-0.934) (0.749,-0.752) (0.897,-0.527) (0.990,-0.271)};
\fill (-0.940,0) circle (1.4pt);
\node[font=\tiny] at (-0.613,0) {$f$};
\node[font=\tiny, gray] at (1.12,1.12) {$\infty$};
\node[font=\tiny] at (0,-1.55) {planar type $p$};
\end{scope}
\node at (2.55,0) {$\neq$};
\begin{scope}[xshift=5.1cm]
\draw[line width=0.9pt] plot [smooth cycle, tension=0.7] coordinates
{(0.213,0.000) (0.212,0.028) (0.207,0.056) (0.200,0.085) (0.189,0.114)
 (0.173,0.144) (0.153,0.175) (0.126,0.206) (0.092,0.239) (0.046,0.271)
 (-0.015,0.303) (-0.097,0.329) (-0.208,0.343) (-0.361,0.326) (-0.564,0.239)
 (-0.790,0.000) (-0.881,-0.473) (-0.561,-1.003) (0.046,-1.150)
 (0.505,-0.937) (0.738,-0.638) (0.840,-0.363) (0.877,-0.117) (0.877,0.117)
 (0.840,0.363) (0.738,0.638) (0.505,0.937) (0.046,1.150) (-0.561,1.003)
 (-0.881,0.473) (-0.790,0.000) (-0.564,-0.239) (-0.361,-0.326)
 (-0.208,-0.343) (-0.097,-0.329) (-0.015,-0.303) (0.046,-0.271)
 (0.092,-0.239) (0.126,-0.206) (0.153,-0.175) (0.173,-0.144) (0.189,-0.114)
 (0.200,-0.085) (0.207,-0.056) (0.212,-0.028)};
\fill (-0.790,0) circle (1.4pt);
\node[font=\tiny, gray] at (1.12,1.12) {$f=\infty$};
\node[font=\tiny] at (0,-1.55) {planar type $p'$};
\end{scope}
\end{tikzpicture}
\caption{Two planar pictures of the same spherical one-crossing unknot
diagram, related by a planar inversion.  The face $f$ bounded by the small
loop on the left is the unbounded face on the right, so the two pictures
have different planar types and are not planar ambient isotopic.  By
\cref{lem:planar-type}, no thick movie with everywhere-regular projection
joins lifts of the left picture to lifts of the right one; this
planar-class rigidity is the source of the face-consistency condition of
\cref{def:face-consistent}.}
\label{fig:planar-types}
\end{figure}

\begin{definition}[Face-consistent planar labeling]
\label{def:face-consistent}
Let $Q$ be a finite submultigraph of $G_S(K)$ with specified Reidemeister
edges.  A \emph{planar labeling} of $Q$ assigns to each vertex $D$ a planar
type $p_D$ refining $D$.  A specified edge $e:D\to D'$ is \emph{compatible}
with the labeling if there is a one-parameter family of planar pictures, of
type $p_D$ before and of type $p_{D'}$ after, regular except for a single
wall crossing realizing $e$ at a site in the affine plane.  The labeling is
\emph{face-consistent} if every specified edge of $Q$ is compatible.  Every
tree-shaped $Q$ admits a face-consistent labeling: choose any planar type
at the root and propagate along the edges; each move, performed at an
affine site, determines an admissible planar type of the new endpoint,
because the face containing $\infty$ persists through a move supported away
from it.
\end{definition}

\begin{remark}[The monodromy behind the labeling condition]
\label{rem:face-monodromy}
By \cref{lem:planar-type}, a coherent lift with one fiber component per
vertex forces one planar type per vertex, and every edge movie must respect
these types on both sides.  Around a cycle of specified moves, the induced
tracking of the outer face may return to a different face of the initial
diagram, in which case no labeling makes all edges of that cycle
compatible.  This planar-class monodromy is the precise residue of the
endpoint-coherence problem.  It is a finite combinatorial condition on
$Q$, checkable face by face; it does not involve the geometry of thick
curves, and it plays no role in the diagram-image filtration
$\calH_{\Lambda,u}(K)$ or in finite recognizability.
\end{remark}

\begin{theorem}[Vertex-coherent finite lifting]
\label{thm:coherent-finite-lift}
Let $Q$ be a finite submultigraph of $G_S(K)$ with specified Reidemeister
edges, admitting a face-consistent planar labeling $(p_D)_{D\in V(Q)}$; by
\cref{def:face-consistent} this includes every tree-shaped $Q$.  There is a
finite level $\Lambda_Q^{\operatorname{coh}}$ and, for
each vertex $D$ of $Q$, one lifted vertex $(D,F_D)$ of
$\calG_{\Lambda_Q^{\operatorname{coh}},u}^{\operatorname{lift}}(K)$ such
that every specified edge $e:D\to D'$ of $Q$ lifts to an edge
\[
  (e,F_D,F_{D'}).
\]
Thus the whole finite pattern has a coherent lift using one fiber component
at each shared vertex.  Conversely, if $Q$ admits a vertex-coherent lift at
some finite level, then the planar types of the chosen fiber components
form a face-consistent labeling of $Q$.
\end{theorem}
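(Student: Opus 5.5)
The plan has two parts: the converse, which is direct, and then the forward construction.

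\emph{Converse.} Suppose a vertex-coherent lift $(D,F_D)$ is given. By \cref{lem:planar-type}, each fiber component $F_D$ has a single well-defined planar type $p_D$ refining $D$. Each specified edge $e:D\to D'$ is realized by a projection-generic movie from a point of $F_D$ to a point of $F_{D'}$.

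Projecting this movie gives a one-parameter family of planar pictures. It is regular except at one wall crossing, which realizes $e$ at some point of $u^\perp$, that is, at an affine site. Again by \cref{lem:planar-type}, its type is $p_D$ before the crossing and $p_{D'}$ after it. This is exactly compatibility of $e$ with the labeling. Hence $(p_D)$ is face-consistent.

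\emph{Forward direction.} The plan follows the sketch in the introduction.

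\emph{Step 1: fixed vertex lifts.} For each vertex $D$ of $Q$, fix a planar picture $P_D$ of type $p_D$. Fix also a smooth positive-thickness lift $\gamma_D$, obtained by assigning a smooth height function $h_D$ over $P_D$ with the prescribed over/under data at crossings. Let $F_D$ be the fiber component containing $\gamma_D$, taken at the eventual level.

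\emph{Step 2: the edge movies.} For a specified edge $e:D\to D'$, compatibility provides a planar family from type $p_D$ to type $p_{D'}$ with a single wall crossing. Each edge movie is then assembled in three pieces:
\begin{enumerate}[label=(\roman*)]
\item a planar isotopy carrying $P_D$ to the initial picture of the standard move, lifted by carrying the heights along, so the projection stays regular and stays in the fiber component of $\gamma_D$;
\item the standard local spatial Reidemeister movie of \cref{lem:finite-edge-realization};
\item a planar isotopy from the terminal picture to $P_{D'}$, which exists because both have planar type $p_{D'}$ (this is where face-consistency is used), lifted with heights transported along.
\end{enumerate}

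After piece (iii) the curve has projection $P_{D'}$ but some height function $h'$, not necessarily $h_{D'}$. We finish with the convex interpolation $(1-s)h'+sh_{D'}$ over the fixed picture $P_{D'}$. At every crossing both $h'$ and $h_{D'}$ put the same strand on top, so their convex combinations do too. The curve therefore stays embedded and the projection stays equal to $P_{D'}$, so the endpoint lands in $F_{D'}$.

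\emph{Step 3: a common level.} There are finitely many edges, and each movie is a compact family of smooth embeddings. By the argument of \cref{lem:uniform-thickness-family} each movie has thickness bounded below and length bounded above, so each has finite maximal ropelength. Take $\Lambda_Q^{\operatorname{coh}}$ to be the maximum over all movies and all $\gamma_D$.

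All pieces of every movie then lie in one closed sublevel. Since the regular parts are wall-free, the fiber components containing the $\gamma_D$ are the components reached, and every specified edge lifts to $(e,F_D,F_{D'})$ at this one level.

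\emph{Main obstacle.} The step I expect to be delicate is the bookkeeping of fiber components. The claim is that pieces (i) and (iii), together with the height interpolation, stay inside a single path component of the fiber over a fixed diagram in the quotient $\calX_{\Lambda,u}(K)$. A second concern is smoothness at the junctions of the pieces; the plan handles this by allowing piecewise jointly $C^{1,1}$ families, as the paper's definition of admissible isotopy permits.

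A related subtlety is that the heights along (i) and (iii) must stay separated near crossings throughout the isotopy, so that no spurious intersection appears. My first attempt at this would be to rescale all height differences by a large constant; vertical scaling does not change the projection, and it keeps thickness positive at the cost of a bounded increase in length.
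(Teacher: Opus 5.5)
Your proposal is correct and follows essentially the same route as the paper's proof. Both arguments use fixed lifts $d_D+h_Du$ over pictures of the labeled planar types, lifted planar isotopies on either side of a standard local movie positioned by the compatible planar family, and convex interpolation of heights over the fixed terminal picture. Both then take a maximum over finitely many compact movies and read the converse off from \cref{lem:planar-type}. Your height-rescaling worry is unnecessary: when you lift an ambient planar isotopy and carry the heights along in the curve parameter, the parameter pairs over double points do not change, so every strict over--under inequality is preserved automatically.
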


\begin{proof}
For each vertex $D$ of $Q$, choose a smooth regular planar representative
$d_D:S^1\to u^\perp$ of the labeled planar type $p_D$, and a smooth height
function $h_D:S^1\to\mathbb R$
which realizes the prescribed over--under information.  The spatial curve
\[
  x_D(s)=d_D(s)+h_D(s)u
\]
is an embedded positive-thickness representative of $K$.

Fix a specified edge $e:D\to D'$.  Starting at $x_D$, lift a planar ambient
isotopy which puts $d_D$ into a representative adapted to the chosen local
Reidemeister move; by compatibility of $e$ with the labeling this can be
done within the planar type $p_D$, with the move site in the affine plane.  Perform a standard smooth spatial $R1$, $R2$, or $R3$
movie in a small ball, keeping the complement fixed and all strands
uniformly separated.  By compatibility, the terminal projection has planar type $p_{D'}$, so a
planar ambient isotopy carries it to the fixed representative $d_{D'}$;
lift this isotopy as well.  At this stage the
curve has the form
\[
  d_{D'}(s)+\widehat h_e(s)u
\]
for a height function $\widehat h_e$ realizing the same crossing information
as $h_{D'}$.  The straight interpolation
\[
  h_{e,t}=(1-t)\widehat h_e+t h_{D'}
\]
preserves every strict over--under inequality.  Since the planar projection
is fixed and regular, the curves
$d_{D'}+h_{e,t}u$ remain embedded and give a regular-projection path ending
exactly at $x_{D'}$.

The complete movie from $x_D$ to $x_{D'}$ has one Reidemeister singular time
and is otherwise regular.  It is a compact smooth family of embedded curves;
hence it has a positive common thickness lower bound and a finite common
length upper bound.  Its maximal ropelength is finite.  Taking the maximum
over the finitely many vertices and specified edges of $Q$ gives
$\Lambda_Q^{\operatorname{coh}}$ and the required fiber components $F_D$.

For the converse, suppose a vertex-coherent lift is given.  By
\cref{lem:planar-type}, each chosen fiber component $F_D$ has a single
planar type $p_D$, and the projected movie of each lifted edge is regular
except for one affine wall crossing realizing that edge, starting in type
$p_D$ and ending in type $p_{D'}$.  These types form a face-consistent
labeling, so the labeling hypothesis is precisely the obstruction and not
an artifact of the construction.
\end{proof}

\begin{theorem}[Effective crossing-complexity visibility]
\label{thm:effective-visibility}
There is a universal computable nondecreasing function
$A:\mathbb N\to(0,\infty)$ with the following property.  Every spherical
knot diagram with at most $n$ crossings, and every classical Reidemeister
edge whose two endpoint diagrams have at most $n$ crossings, is visible in
the appropriate diagram-image filtration by level $A(n)$.  Consequently,
for every rooted radius-$R$ ball centered at a $c$-crossing diagram,
\[
  L_u\bigl(K;\mathbb B_R(D)\bigr)\le A(c+2R).
\]
\end{theorem}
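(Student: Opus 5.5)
The plan is to make the proof of \cref{thm:finite-visibility} uniform and algorithmic in the crossing number.  For fixed $n$, the set of spherical diagrams with at most $n$ crossings is finite and computably enumerable.  Concretely, these are the 4-valent connected plane multigraphs on $S^2$ with at most $n$ vertices, together with the one-component straight-ahead condition and crossing signs, and also the crossingless circle.  Likewise, each such diagram admits only finitely many Reidemeister moves whose other endpoint has at most $n$ crossings, and these can be enumerated by scanning faces and edges for $R1$, $R2$ and $R3$ sites.

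Since every knot type appears, this finite list covers all knot types simultaneously.  That is the source of the universality of $A$: visibility of a diagram or edge in $\calH_{\Lambda,u}(K)$ depends only on the diagram or move, and the knot type $K$ is determined by the diagram.

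For each enumerated diagram, I would build an explicit spatial model by a deterministic recipe.
\begin{enumerate}[label=(\roman*)]
\item Fix a rational straight-line or orthogonal grid embedding of the planar 4-valent graph on an $O(n)\times O(n)$ grid, using a standard planar grid-drawing algorithm.
\item Route the strands as rational polygons with bounded turning.
\item Assign heights $\pm1$ near crossings and $0$ elsewhere.
\item Replace corners by circular arcs of a fixed radius, giving a $C^{1,1}$ curve whose thickness is bounded below by an explicit rational constant.
\end{enumerate}
The resulting ropelength is computable and, by the grid size, bounded by a computable function of $n$.

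For each enumerated edge, I would adapt the proof of \cref{lem:finite-edge-realization} and \cref{thm:coherent-finite-lift}.  First, planar-isotope the grid model of $D$ so that the move site sits in a unit grid square; on a grid of size $O(n)$ this can be done by a computable sequence of grid moves with explicitly bounded intermediate lengths and thickness.  Next, insert one of finitely many fixed standard local $R1$, $R2$ or $R3$ movies, each with a known ropelength bound.  Finally, isotope back to the grid model of $D'$.

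Set $A(n)$ to be the maximum, over this finite computable list, of the certified ropelength bounds for all vertex models and all curves in all edge movies.  Then take the running maximum over $m\le n$ to make $A$ nondecreasing.  Computability follows because each certified bound is produced by exact rational arithmetic, and the list is finite and enumerable.

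For the consequence, let $D$ have $c$ crossings.  Each move changes the crossing number by at most two, so every vertex of $\mathbb B_R(D)$ has at most $c+2R$ crossings, and so does each endpoint of every edge in the ball.  Thus every vertex and edge of $\mathbb B_R(D)$ is visible by level $A(c+2R)$.  Monotonicity of $\calH$ then gives $\mathbb B_R(D)\subset\calH_{A(c+2R),u}(K)$, hence $L_u(K;\mathbb B_R(D))\le A(c+2R)$.

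The main obstacle is making the edge movies certifiably bounded.  In particular, one needs a computable lower bound on thickness along the planar isotopies that relocate the move site and transform one grid model into another.  My first attempt would avoid general isotopies entirely.  I would make the grid embeddings canonical enough that the endpoint models of a move differ only inside a bounded number of grid cells, after a uniform rescaling of the grid by a factor depending on $n$.  Any remaining connecting isotopy would then be built from finitely many elementary grid moves, namely slides of a vertex by one cell, each of which has an explicit thickness bound.

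If that fails, the fallback is a semialgebraic argument.  By \cref{lem:polygonal-thickness-semialgebraic}, for fixed vertex count the relevant polygonal sublevel sets are semialgebraic.  One would then search, with rational witnesses, for a path in that sublevel realizing the move.  Existence is guaranteed by \cref{lem:finite-edge-realization}, and decidability of the first-order theory of the reals makes the search and the certified bound computable.  Passing from polygons back to $C^{1,1}$ curves would use a uniform rounding of corners.
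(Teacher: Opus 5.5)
Your skeleton is the paper's: finite enumeration of diagrams and edges with at most $n$ crossings, rational polygonal models with separated heights, fixed local $R1$/$R2$/$R3$ templates, uniform rounding with certified curvature and separation bounds, a running maximum, and the fact that a move changes the crossing number by at most two. The gap is in how you build the edge movies. You anchor each movie at one fixed grid model per diagram: you planar-isotope the model of $D$ until the move site lies in a unit square, perform the move, and then isotope to the model of $D'$. Neither isotopy need exist, and thickness is not the reason.

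A grid model is a planar picture, so it singles out an unbounded face. By \cref{lem:planar-type}, no movie with regular projection changes that face.
\begin{enumerate}[label=(\roman*)]
\item Suppose $e$ removes a monogon or a bigon, or is an $R3$ move on a triangle, and that face is the unbounded face of your model of $D$. Then no planar isotopy brings the site into a unit square.
\item After an affine move, the unbounded face of the new picture is inherited from the model of $D$. If your model of $D'$ has a different unbounded face, no regular isotopy reaches it.
\end{enumerate}
This is exactly the face-consistency obstruction of \cref{def:face-consistent} and \cref{thm:coherent-finite-lift}. By \cref{rem:face-monodromy}, cycles of moves may impose incompatible requirements. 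Your route therefore needs an unproved global choice of models compatible with every edge among diagrams with at most $n$ crossings.

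Your remedies do not address this. The canonical-embedding idea presupposes such a consistent choice. Elementary slides are planar isotopies, so they preserve planar type. The semialgebraic fallback cites \cref{lem:finite-edge-realization} for a path between your fixed models, but that lemma realizes $e$ only between some lifts of $D$ and $D'$.

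The repair removes your main obstacle instead of solving it. By \cref{def:diagram-image}, the edge $e$ lies in $\calH_{\Lambda,u}(K)$ as soon as the specific move has some realization of ropelength at most $\Lambda$, starting anywhere over $D$ and ending anywhere over $D'$. Vertex models and edge movies need not be related.

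So, as the paper does, draw for each listed edge its own rational polygonal picture of $D$. Put the point at infinity outside a disk supporting the move, place a fixed rational template for the site inside a rational box, and keep the rest of the curve at a definite clearance from that box. The movie is then the template movie with the complement fixed. Its certified length and thickness bounds come from the template and the clearance. No connecting planar isotopies, canonical embeddings, or first-order searches are needed.
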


\begin{proof}
Up to orientation-preserving isotopy of $S^2$, there are only finitely many
knot diagrams with at most $n$ crossings.  Each has finite valence in the
Reidemeister multigraph, so there are only finitely many Reidemeister edges
whose endpoint diagrams both have at most $n$ crossings.

This finite list is effective.  For each diagram, a planar graph-drawing
algorithm gives a rational polygonal diagram; assigning separated rational
heights at crossings gives an embedded spatial polygon.  For each listed
edge, use a fixed rational polygonal template for the corresponding local
$R1$, $R2$, or $R3$ movie, with the complement kept outside a rational move
box.  After a uniform, explicitly controlled rounding of corners, the
resulting smooth curves and movies have computable length upper bounds,
curvature bounds, and positive separation bounds.  These data give a
certified positive lower bound for thickness.  Taking the maximum of the
finitely many certified ropelength bounds defines $A(n)$; replacing it by
$\max_{j\le n}A(j)$ makes it nondecreasing.

Every Reidemeister move changes crossing number by at most two.  Hence every
vertex in the radius-$R$ ball about a $c$-crossing diagram has at most
$c+2R$ crossings, and the asserted certificate bound follows.
\end{proof}

\begin{remark}
The function $A$ is deliberately universal and very large.  Its role is to
separate logical effectivity from useful geometry.  Finding polynomial,
near-linear, or knot-sensitive visibility bounds is a substantially stronger
problem.
\end{remark}

The diagram-image formulation remains convenient because it is monotone in
$\Lambda$.  \Cref{thm:coherent-finite-lift} shows that the stronger
endpoint-coherent realization is available at some finite level for every
pattern admitting a face-consistent planar labeling, with no analytic
hypothesis; the labeling condition is a finite combinatorial matter,
equivalent to coherent liftability, and is not needed for recognizability.

\begin{theorem}[Unconditional finite recognizability]
\label{thm:finite-recognizability}
For every knot type $K$ and every projection direction $u$,
\[
  L_{\operatorname{char},u}(K)<\infty.
\]
If a BC-characteristic certificate visible for $K$ occurs as a saturated
certificate in the filtered diagram graph of another knot type $K'$, then
$K'=K$ or $K'=\mirror K$.
\end{theorem}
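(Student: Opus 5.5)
The proof splits into the finiteness assertion and the rigidity assertion, and both are assembled from results already stated.

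For finiteness, fix $K$ and $u$. The plan is to produce one BC-characteristic certificate and bound its visibility level. First, choose any diagram $D\in\calD_S(K)$; this set is nonempty because $K$ is tame. \Cref{thm:BC-finite-local} then supplies a radius $R=R(D)$ for which $Q=\mathbb B_R(D)$ is characteristic. Since $G_S(K)$ is locally finite, the full ball $S_R(D)$ is a finite submultigraph with finitely many specified edges, multiplicities included. Applying \cref{thm:finite-visibility} gives $\Lambda_Q<\infty$ with $Q\subset\calH_{\Lambda_Q,u}(K)$. The decorations cause no extra difficulty: the valence labels $\nu(E)$ are attached to vertices of $\calH$ by \cref{def:diagram-image}, and edge types are intrinsic. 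Alternatively, \cref{thm:effective-visibility} gives the explicit bound $\Lambda_Q\le A(c+2R)$ with $c=\crn(D)$. Hence $L_u(K;Q)\le\max\{\Lambda_Q,\Rop(K)\}<\infty$, and taking the infimum over certificates gives $L_{\operatorname{char},u}(K)<\infty$. The lower bound $\Lambda\ge\Rop(K)$ in the definition of $L_u$ is harmless, because $\calH$ is monotone and the set $\{\Lambda\ge \Rop(K)\mid Q\subset\calH_{\Lambda,u}(K)\}$ is nonempty.

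For rigidity, suppose $Q=\mathbb B_R(D)$ occurs as a saturated certificate in $\calH_{\Lambda,u}(K')$ for some $\Lambda$. The first step is to upgrade this to a full radius-$R$ rooted-ball occurrence in $G_S(K')$. The occurrence maps the root to a diagram $D'$ and $Q$ isomorphically onto a submultigraph of $\calH_{\Lambda,u}(K')\subset G_S(K')$, and it preserves types, multiplicities and the labels $\nu$. For this image to be exactly $S_R(D')$, two things must hold. First, every ambient edge of $G_S(K')$ at a vertex at distance less than $R$ must appear. Saturation gives this: the filtered degree equals $\nu$, and $\nu$ is the ambient valence $\val_{G_S(K')}$, since the valence decoration in $\calH$ is by definition the ambient one. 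So no incident move is omitted, and by induction on distance no vertex within distance $R$ is missing. Second, no extra edges may join vertices at distance exactly $R$. Such edges also lie among the ambient edges counted by $\nu$ at their endpoints, so the preserved labels force them to already belong to the image. This second point is the step I expect to be the main obstacle, and I will check it carefully. At boundary vertices saturation is not imposed, so I will rely on the convention in the definition of a saturated occurrence that it ``is a full radius-$R$ ball''. If needed I would phrase the argument through that stated equivalence, or enlarge $R$ by one using \cref{thm:effective-visibility} so that the relevant vertices are interior.

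Once the image is a full rooted ball in $G_S(K')$, the characterizing property of \cref{thm:BC-finite-local}, restated as \cref{prop:no-global-order}, gives $K'=K$ or $K'=\mirror K$. Finally, I would remark that diagram labels are never used in either direction of the argument. Only the abstract decorated multigraph is compared, so the conclusion is genuine recognition and not a tautology.
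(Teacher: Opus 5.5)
Your proposal follows the paper's proof: choose a diagram $D$, take the Barbensi--Celoria radius from \cref{thm:BC-finite-local}, and apply \cref{thm:finite-visibility} for finiteness; for rigidity, upgrade the saturated, valence-preserving occurrence to a full radius-$R$ ball in $G_S(K')$ and apply \cref{thm:BC-finite-local} again. One caution: your claim that the preserved labels force edges between two distance-$R$ vertices into the image does not hold, because at boundary vertices $\nu$ also counts edges leaving the ball, so that step rests on the stated definition of a saturated occurrence, exactly as in the paper (and enlarging $R$ does not help, since the certificate is fixed in the hypothesis).
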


\begin{proof}
Choose a diagram $D$ of $K$.  By \cref{thm:BC-finite-local}, some finite
ball $Q=\mathbb B_R(D)$ is BC-characteristic.  By
\cref{thm:finite-visibility}, $Q$ is contained in
$\calH_{\Lambda_Q,u}(K)$ for a finite $\Lambda_Q$.  Hence
$L_{\operatorname{char},u}(K)\le\Lambda_Q<\infty$.

For the second assertion, saturation and preservation of ambient valence
make the occurrence a full radius-$R$ rooted ball in $G_S(K')$.
\Cref{thm:BC-finite-local} then gives the conclusion.  If $K$ is
non-periodic, Barbensi--Celoria's computable choice of $R(D)$ together with
\cref{thm:effective-visibility} yields the explicit recursive upper bound
$A(\crn(D)+2R(D))$.
\end{proof}

\begin{corollary}[Coherent finite recognition length]
For a BC-characteristic certificate $Q$, let
$L_u^{\operatorname{coh}}(K;Q)$ be the infimum of the levels at which $Q$
has a vertex-coherent lift as in \cref{thm:coherent-finite-lift}, and set
\[
  L_{\operatorname{char},u}^{\operatorname{coh}}(K)
  =\inf_Q L_u^{\operatorname{coh}}(K;Q).
\]
If some BC-characteristic certificate for $K$ admits a face-consistent
planar labeling, then
\[
  L_{\operatorname{char},u}^{\operatorname{coh}}(K)<\infty.
\]
\end{corollary}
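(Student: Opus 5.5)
The corollary should follow from \cref{thm:coherent-finite-lift} combined with the definition of the coherent infimum. Let $Q=\mathbb B_R(D)$ be a BC-characteristic certificate for $K$ that admits a face-consistent planar labeling $(p_E)_{E\in V(Q)}$. Regard $Q$ as a finite submultigraph of $G_S(K)$ whose specified edges are all the edges of the ball, counted with multiplicity and type. Then \cref{thm:coherent-finite-lift} applies directly. It gives a finite level $\Lambda_Q^{\operatorname{coh}}$ and fiber components $F_E$, one for each vertex, such that every specified edge $e:E\to E'$ lifts to $(e,F_E,F_{E'})$ in $\calG^{\operatorname{lift}}_{\Lambda_Q^{\operatorname{coh}},u}(K)$.

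Next I would check that this level lies in the set whose infimum defines $L_u^{\operatorname{coh}}(K;Q)$. In particular, that set is nonempty, so $L_u^{\operatorname{coh}}(K;Q)\le\Lambda_Q^{\operatorname{coh}}<\infty$. If the definition also requires levels $\ge\Rop(K)$, as in $L_u(K;Q)$, I would use monotonicity. The construction in the proof of \cref{thm:coherent-finite-lift} produces explicit movies, and these remain $\Lambda$-admissible for every $\Lambda\ge\Lambda_Q^{\operatorname{coh}}$. So we may replace $\Lambda_Q^{\operatorname{coh}}$ by $\max\{\Lambda_Q^{\operatorname{coh}},\Rop(K)\}$.

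Fiber components at a higher level can merge. This does no harm, because a coherent lift at the higher level only needs to exist. We take the fiber components containing the same fixed curves $x_E$, and the same movies then realize the edges between them. Taking the infimum over all BC-characteristic $Q$ gives $L_{\operatorname{char},u}^{\operatorname{coh}}(K)\le L_u^{\operatorname{coh}}(K;Q)<\infty$.

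The only point needing care is that parallel edges of $Q$ must be realized as distinct lifted edges. They are, because lifted edges are indexed by the classical edge $e$, and distinct classical moves stay distinct by the definition of $\calG^{\operatorname{lift}}$. I expect no substantive obstacle: the geometric work is already contained in \cref{thm:coherent-finite-lift}, and the face-consistency hypothesis is exactly the hypothesis of that theorem.
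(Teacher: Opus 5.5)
Your proposal is correct and matches the paper's proof: apply \cref{thm:coherent-finite-lift} to a face-consistently labeled BC-characteristic certificate $Q$ and bound the infimum by the resulting finite level $\Lambda_Q^{\operatorname{coh}}$. Your additional remarks on monotonicity, on persistence of the lift at higher levels, and on parallel lifted edges are sound but not needed; the adjustment by $\Rop(K)$ is also redundant, since any level carrying a representative of $K$ already satisfies $\Lambda_Q^{\operatorname{coh}}\ge\Rop(K)$.
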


\begin{proof}
Apply \cref{thm:coherent-finite-lift} to such a certificate.
\end{proof}

Whether every knot type admits a face-consistently labeled characteristic
certificate is a purely combinatorial question about $G_S(K)$; it is posed
in \cref{sec:further}.  Unconditional finite recognizability
(\cref{thm:finite-recognizability}) is independent of it.

\subsection{Quantitative coherent patterns from separated local moves}
\label{subsec:cubes}

The visibility levels supplied by \cref{thm:effective-visibility} are
universal but astronomically coarse, and \cref{thm:coherent-finite-lift}
gives finiteness with no useful bound.  The following commutation theorem
complements both: from purely local data it produces coherently lifted
patterns of exponential size at an explicit, additive ropelength level.

\begin{definition}[Separated system of local moves]
\label{def:separated-system}
Let $\gamma_0$ be a $C^{1,1}$ embedded representative of $K$ with
$\Thi(\gamma_0)\ge1$ whose projection in direction $u$ is regular, with
diagram $D_0$.  A \emph{separated system of local moves} on $\gamma_0$ is a
finite collection $(\Gamma_i,B_i,\tau_i)$, $1\le i\le k$, where each
$B_i\subset\R^3$ is a closed ball, each $\tau_i\in\{R1,R2,R3\}$, and each
$\Gamma_i(t)$, $0\le t\le1$, is a movie of embedded $C^{1,1}$ curves of
thickness at least $1$ starting at $\gamma_0$, stationary outside $B_i$,
whose projected movie crosses exactly one Reidemeister wall, transversely
and of type $\tau_i$, and is otherwise regular, such that
$\operatorname{dist}(B_i,B_j)\ge2$ for all $i\ne j$ and the disks
$p_u(B_1),\dots,p_u(B_k)\subset u^\perp$ are pairwise disjoint.  For
$S\subseteq\{1,\dots,k\}$, the \emph{composite state} $\gamma_S$ agrees
with $\Gamma_i(1)$ inside $B_i$ for $i\in S$ and with $\gamma_0$ elsewhere,
with diagram $D_S=\Pi_u(\gamma_S)$; the \emph{composite movie} performing
move $i\notin S$ from $\gamma_S$ replaces $\Gamma_i(1)$ by $\Gamma_i(t)$
inside $B_i$.
\end{definition}

\begin{theorem}[Cube patterns from separated systems]
\label{thm:separated-cubes}
With the notation of \cref{def:separated-system}, set
\[
  \Lambda_{\operatorname{cube}}
  =\Len(\gamma_0)+\sum_{i=1}^{k}\delta_i,
  \qquad
  \delta_i=\max\Bigl\{0,\;\sup_{t\in[0,1]}\Len(\Gamma_i(t))-\Len(\gamma_0)\Bigr\}.
\]
Then every composite state and every intermediate composite curve is an
embedded $C^{1,1}$ representative of $K$ with thickness at least $1$ and
length, hence ropelength, at most $\Lambda_{\operatorname{cube}}$, and
every composite movie is an admissible projection-generic movie in
$\calX_{\Lambda_{\operatorname{cube}},u}(K)$ crossing exactly one wall,
transversely and of the expected type.  Consequently all diagrams $D_S$
and all transitions $D_S\to D_{S\cup\{i\}}$ are visible in
$\calH_{\Lambda_{\operatorname{cube}},u}(K)$; and if the $2^{k}$ diagrams
$D_S$ are pairwise distinct, the typed $k$-cube they span in $G_S(K)$ has a
vertex-coherent lift at level $\Lambda_{\operatorname{cube}}$, with the
fiber component of $\gamma_S$ at each vertex.
\end{theorem}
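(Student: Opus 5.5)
The proof will be a locality argument that treats the $k$ balls one at a time. First, the length bound. Each composite curve $\gamma$ (a composite state $\gamma_S$, or an intermediate curve of a composite movie, with move $i$ at time $t$ and the moves in $S$ completed) agrees with $\gamma_0$ outside $\bigcup_j B_j$. Inside each $B_j$ it agrees with some $\Gamma_j(t_j)$ with $t_j\in\{0,t,1\}$. Since lengths add over the pieces, we get
\[
\Len(\gamma)-\Len(\gamma_0)=\sum_j\bigl(\Len(\Gamma_j(t_j))-\Len(\gamma_0)\bigr)\le\sum_j\delta_j .
\]
This uses that $\Gamma_j(t_j)$ differs from $\gamma_0$ only inside $B_j$, so its length excess is localized there. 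Hence $\Len(\gamma)\le\Lambda_{\operatorname{cube}}$. Once thickness $\ge1$ is established, this gives $\Rop(\gamma)\le\Len(\gamma)\le\Lambda_{\operatorname{cube}}$. Embeddedness and knot type follow similarly. Each piece is a subarc of an embedded curve $\Gamma_j(t_j)$, and arcs lying in different balls are at distance $\ge2$. Arcs in a ball $B_j$ versus the common exterior part are handled because $\Gamma_j(t_j)$ itself is embedded and contains that exterior part. The knot type is $K$ because each composite curve is obtained from $\gamma_0$ by a concatenation of isotopies supported in disjoint balls.

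The main step, which I expect to be the obstacle, is the thickness bound $\Thi(\gamma)\ge1$. I will use the characterization of thickness as the minimum of the curvature radius and half the doubly-critical self-distance, as in the proof of \cref{lem:uniform-thickness-family}. Equivalently, I will use the normal-injectivity-radius formulation: every point not on $\gamma$ at distance $<1$ from $\gamma$ has a unique nearest point on $\gamma$.

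The curvature radius is local, so it is $\ge1$ because near every point $\gamma$ coincides with a thickness-$\ge1$ curve $\Gamma_j(t_j)$ or $\gamma_0$. A pair of points on $\gamma$ realizing a doubly-critical distance $<2$ must lie, by $\operatorname{dist}(B_i,B_j)\ge2$, either in the exterior together with at most one ball $B_j$, or entirely within one ball. In either case both points lie on the single curve $\Gamma_j(t_j)$, whose doubly-critical distances are $\ge2$; this is a contradiction. One point needs care: a doubly-critical pair with one endpoint in $B_i$ and the other in the exterior near $B_j$. Here the separation hypothesis has to be applied to the pair, not to the balls. I would handle this by observing that such a pair lies on the curve that agrees with $\gamma$ inside $B_i$ and with $\gamma_0$ elsewhere, which is $\Gamma_i(t_i)$. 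So any pair not meeting two distinct balls is a pair on one single-move curve, and pairs meeting two distinct balls have distance $\ge2$.

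It remains to show genericity, visibility and coherence. The projected movie of a composite move changes only inside the disk $p_u(B_i)$. The other disks $p_u(B_j)$ are disjoint from it and stationary, and outside all the disks the picture is that of $D_0$. Crossings between strands in different disks cannot be created, because the moving strands project into $p_u(B_i)$. Therefore the composite projected movie is regular except at the single wall crossing of $\Gamma_i$, transversely and of type $\tau_i$, and it is an admissible projection-generic movie in $\calX_{\Lambda_{\operatorname{cube}},u}(K)$. This puts every $D_S$ and every edge $D_S\to D_{S\cup\{i\}}$ into $\calH_{\Lambda_{\operatorname{cube}},u}(K)$. Finally, assume the $D_S$ are pairwise distinct, and take $F_S$ to be the fiber component of $\gamma_S$. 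Each cube edge is then realized by a movie from exactly $\gamma_S$ to exactly $\gamma_{S\cup\{i\}}$, giving the lifted edge $(e,F_S,F_{S\cup\{i\}})$. Hence the cube has a vertex-coherent lift, and no height interpolation as in \cref{thm:coherent-finite-lift} is needed.
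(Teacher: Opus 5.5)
Your proposal is correct and follows essentially the same route as the paper. Both arguments use local coincidence with a single reference curve for the curvature bound, and the same dichotomy for doubly-critical pairs: points in two distinct balls are at distance $\ge2$, and otherwise both points lie in some $B_l\cup(\R^3\setminus\bigcup_j B_j)$, where the composite coincides with one thickness-one reference curve. They then use projected separation to get the single transverse wall crossing, and reuse the same curve $\gamma_S$ at every vertex for coherence. Your explicit length-additivity computation simply spells out the paper's one-line remark that the movies are stationary outside their balls.
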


\begin{proof}
By spatial separation, no point of $\R^3$ lies within distance less than
$1$ of two distinct balls, so every point of a composite curve has a
neighborhood meeting at most one ball, on which the composite coincides,
with the same local parametrization, with a single \emph{reference curve}
$\gamma_0$, $\Gamma_j(1)$ for $j\in S$, or $\Gamma_i(t)$.  Hence each
composite curve is $C^{1,1}$ with curvature radius at least $1$ almost
everywhere.  Let $p\ne q$ be points of a composite curve.  If $p\in B_i$
and $q\in B_j$ with $i\ne j$, then $|p-q|\ge2$.  Otherwise $p$ and $q$ lie
in a common region $B_l\cup(\R^3\setminus\bigcup_jB_j)$, on which the
composite coincides with one reference curve; so $p$ and $q$ are distinct
points of an embedded curve of thickness at least $1$, and if $(p,q)$ is a
doubly critical pair of the composite, it is a doubly critical pair of that
reference curve, of length at least $2$.  Hence each composite curve is
embedded with thickness at least $1$.  Since the movies are stationary
outside their balls, its length is at most $\Lambda_{\operatorname{cube}}$,
and therefore its ropelength is at most $\Lambda_{\operatorname{cube}}$.
Performing the moves one at a time through the composite movies shows that
all composite curves represent $K$.

For the projected movies, fix $S$ and $i\notin S$ and let $x\in u^\perp$.
By projected separation, if $x\in p_u(B_i)$, the strands over a
neighborhood of $x$ are exactly those of the reference movie
$\Gamma_i(t)$, with the same heights along $u$; if $x\in p_u(B_j)$ with
$j\in S$, they are those of $\Gamma_j(1)$, independent of $t$; outside all
disks they are those of $\gamma_0$.  Since only the strands in $B_i$ move,
every singular event of the projected composite movie occurs over
$p_u(B_i)$ and coincides with the single transverse wall crossing of
$\Gamma_i$, of type $\tau_i$; elsewhere the projection is locally identical
to a regular diagram.  Hence each composite movie is projection-generic
with exactly one wall crossing, through curves of
$\calX_{\Lambda_{\operatorname{cube}},u}(K)$.

Visibility of all vertices and transitions in
$\calH_{\Lambda_{\operatorname{cube}},u}(K)$ follows.  If the $2^{k}$
diagrams are pairwise distinct, assign to the cube vertex $S$ the fiber
component of $\gamma_S$ at level $\Lambda_{\operatorname{cube}}$; each
composite movie realizes the corresponding cube edge between exactly these
components, and every occurrence of a vertex as an endpoint of an edge uses
the same curve $\gamma_S$.  This is a vertex-coherent lift.
\end{proof}

\begin{remark}[Collapse and automatic face-consistency]
\label{rem:cube-collapse}
Identical local moves at interchangeable positions can make different
composite diagrams agree, in which case the cube collapses in $G_S(K)$ and
only subpatterns with pairwise distinct vertex diagrams are lifted
injectively; mixing local moves with pairwise distinguishable diagrammatic
effects avoids the collapse.  The face-consistency condition of
\cref{def:face-consistent} is automatic here, as the converse part of
\cref{thm:coherent-finite-lift} shows: all moves are performed at affine
sites in pairwise disjoint disks of one fixed picture, so the outer face is
tracked trivially around every cycle of the cube.
\end{remark}

\begin{figure}[t]
\centering
\begin{tikzpicture}[>=Latex, every node/.style={font=\scriptsize}]
\begin{scope}
\draw[line width=1pt]
  (-1.4,0.55) -- (1.4,0.55) arc (90:-90:0.55) -- (-1.4,-0.55) arc (270:90:0.55);
\draw[dashed, gray] (-0.7,0.55) circle (0.42);
\draw[dashed, gray] (0.7,0.55) circle (0.42);
\node[gray, font=\tiny] at (-0.7,1.22) {$B_1$};
\node[gray, font=\tiny] at (0.7,1.22) {$B_2$};
\node[font=\tiny, align=center] at (0,-1.15) {separated supports\\on $\gamma_0$};
\end{scope}
\draw[->, line width=0.8pt] (2.55,0) -- node[above, font=\tiny, align=center]
  {\cref{thm:separated-cubes}} (4.05,0);
\begin{scope}[xshift=6.6cm, yshift=-0.35cm]
\node[draw, rounded corners, inner sep=2.5pt, font=\tiny] (v0)  at (0,1.7)    {$D_\emptyset$};
\node[draw, rounded corners, inner sep=2.5pt, font=\tiny] (v1)  at (-1.35,0.5) {$D_{\{1\}}$};
\node[draw, rounded corners, inner sep=2.5pt, font=\tiny] (v2)  at (1.35,0.5)  {$D_{\{2\}}$};
\node[draw, rounded corners, inner sep=2.5pt, font=\tiny] (v12) at (0,-0.7)   {$D_{\{1,2\}}$};
\draw[->] (v0) -- node[above left=-2.5pt, font=\tiny] {$\tau_1$} (v1);
\draw[->] (v0) -- node[above right=-2.5pt, font=\tiny] {$\tau_2$} (v2);
\draw[->] (v1) -- node[below left=-2.5pt, font=\tiny] {$\tau_2$} (v12);
\draw[->] (v2) -- node[below right=-2.5pt, font=\tiny] {$\tau_1$} (v12);
\node[font=\tiny, align=center] at (0,-1.5) {vertex-coherent $2$-cube\\at level $\Lambda_{\operatorname{cube}}$};
\end{scope}
\end{tikzpicture}
\caption{A separated system of two local moves on a thickness-one stadium
curve $\gamma_0$.  The supports $B_1,B_2$ are at spatial distance at least
$2$ and have disjoint projections, so the two movies commute; the four
composite states realize a vertex-coherent typed $2$-cube at the explicit
level $\Lambda_{\operatorname{cube}}=\Len(\gamma_0)+\delta_1+\delta_2$
(\cref{thm:separated-cubes}).}
\label{fig:separated-cube}
\end{figure}

\begin{definition}[Finite knot theory at scale $\Lambda$]
At scale $\Lambda$, the available recognition data are the finite saturated
rooted-ball certificates contained in $\calH_{\Lambda,u}(K)$, compared as
abstract decorated multigraphs.  A knot type is recognized at this scale if
one of these certificates is BC-characteristic for it.  This terminology
does not assert that the whole level graph is finite.
\end{definition}


\section{Diagrammatic shadows of ideal-stratum persistence}
\label{sec:merge-trees}

This section records the projection-framed component persistence associated with the lifted graph and its relation to the unframed ideal-stratum program of \cite{OzawaIdeal}.  The merge tree remains an important invariant, but here it is understood as a diagrammatic shadow of the ropelength-filtered graph-growth process; see \cref{fig:merge-recognition-principle}.  For general background on persistent topology and stability, see \cite{EdelsbrunnerHarer,CohenSteinerEdelsbrunnerHarer}.

\begin{definition}[Strict diagrammatic accessible range]
For $\Lambda>\Rop(K)$, a strict diagrammatic accessible range is a connected
component of $\calG^{<,\operatorname{lift}}_{\Lambda,u}(K)$.
\end{definition}

For an ideal projection-framed admissible component $C\subset I_u(K)$ and
$\Lambda>\Rop(K)$, let $\iota^{<}_{\Lambda,u}(C)$ denote the set of vertices
of the strict lifted graph determined by regular-projection points of $C$.
This set lies in one connected component of the strict lifted graph, by
\cref{cor:epsilon-genericity} and
\cref{thm:strict-component-reconstruction}.

Let $C_i$ and $C_j$ be ideal projection-framed admissible components.
Their geometric merge scale, in the projection-framed space, is
\[
\begin{aligned}
  m(C_i,C_j)
  =
  \inf\bigl\{
    \Lambda\ge \Rop(K)
    \,\bigm|\,
    &\text{there is a $\Lambda$-admissible isotopy}\\
    &\text{joining a point of $C_i$ to a point of $C_j$}
  \bigr\}.
\end{aligned}
\]

Define the right-relaxed diagrammatic merge scale by
\[
\begin{aligned}
  m_u^{\operatorname{diag},+}(C_i,C_j)
  =
  \inf\bigl\{
    \Lambda> \Rop(K)
    \,\bigm|\,
    & \iota^{<}_{\Lambda,u}(C_i) \text{ and }
      \iota^{<}_{\Lambda,u}(C_j) \\
    & \text{lie in the same component of }
      \calG^{<,\operatorname{lift}}_{\Lambda,u}(K)
  \bigr\}.
\end{aligned}
\]
The superscript $+$ records that a connection at the closed level
$\Lambda$ may be detected at every arbitrarily slightly larger strict level.

\begin{theorem}[Unconditional agreement of merge scales]
\label{thm:agreement-merge-scales}
For all ideal admissible components $C_i,C_j$,
\[
  m_u^{\operatorname{diag},+}(C_i,C_j)=m(C_i,C_j).
\]
\end{theorem}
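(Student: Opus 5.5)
We prove the equality by establishing the two inequalities separately. Throughout, write $m=m(C_i,C_j)$ and $m^+=m_u^{\operatorname{diag},+}(C_i,C_j)$. Both are infima over upward-closed sets of levels. A $\Lambda$-admissible isotopy is $\Lambda'$-admissible for every $\Lambda'\ge\Lambda$. Likewise, a strictly $\Lambda$-admissible movie is strictly $\Lambda'$-admissible, and the sets $\iota^<_{\Lambda,u}(C)$ only grow with $\Lambda$. It therefore suffices to compare the two sets of admissible levels up to arbitrarily small shifts.

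\emph{Step 1: $m^+\le m$.} Let $\Lambda\ge\Rop(K)$ admit a $\Lambda$-admissible isotopy $\Gamma$ from $x\in C_i$ to $y\in C_j$, and fix $\varepsilon>0$.
\begin{enumerate}[label=(\alph*)]
\item First we make the endpoints regular. Each $C_k$ contains regular-projection points, as noted in the definition of regular ideal bases. We precompose and postcompose $\Gamma$ with paths inside $C_i$ and $C_j$. These are ideal-level paths, so they are $\Rop(K)$-admissible and hence $\Lambda$-admissible. They join $x$ and $y$ to regular points $x'\in C_i$ and $y'\in C_j$.
\item The concatenated path is strictly $(\Lambda+\varepsilon)$-admissible. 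By \cref{cor:epsilon-genericity} it can be made projection-generic with the same endpoints.
\item \cref{thm:strict-component-reconstruction} then places the lifted vertices of $x'$ and $y'$ in one component of $\calG^{<,\operatorname{lift}}_{\Lambda+\varepsilon,u}(K)$.
\item Each $\iota^<_{\Lambda+\varepsilon,u}(C_k)$ lies in a single component, as stated before the definition of $m^+$. So the two sets lie in a common component, and $m^+\le\Lambda+\varepsilon$.
\end{enumerate}
Letting $\varepsilon\to0$ and taking the infimum over $\Lambda$ gives $m^+\le m$. If $\Lambda=\Rop(K)$ itself is admissible, the same argument shows that every level strictly above $\Rop(K)$ works. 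Hence $m^+=\Rop(K)=m$ in that case, which respects the requirement $\Lambda>\Rop(K)$ in the definition of $m^+$.

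\emph{Step 2: $m\le m^+$.} Let $\Lambda>\Rop(K)$ be such that $\iota^<_{\Lambda,u}(C_i)$ and $\iota^<_{\Lambda,u}(C_j)$ meet a common component of $\calG^{<,\operatorname{lift}}_{\Lambda,u}(K)$. We build a single path as follows.
\begin{enumerate}[label=(\alph*)]
\item Pick regular points $x\in C_i$ and $y\in C_j$ whose lifted vertices lie in that component.
\item A graph path between these vertices alternates lifted edges and lifted vertices. Each lifted edge is a strictly $\Lambda$-admissible movie. Consecutive movie endpoints lie in the same strict fiber component, so they are joined by strictly $\Lambda$-admissible paths inside the fiber.
\item The first vertex is the fiber component containing $x$, so $x$ is joined to the first movie by a strictly admissible path. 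The same holds at the $y$ end.
\item Concatenating finitely many such pieces, with translation, homothety and rotation adjustments allowed by $\Sim_u^+$, gives one $\Lambda$-admissible isotopy from $C_i$ to $C_j$. The concatenation is piecewise jointly $C^{1,1}$, which the definition allows.
\end{enumerate}
Hence $m\le\Lambda$, and taking the infimum gives $m\le m^+$.

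\emph{Main obstacle.} The delicate point is bookkeeping rather than analysis. It is the claim used in Step 1(d) that all of $\iota^<_{\Lambda+\varepsilon,u}(C_k)$ lies in one component, and that a connection between chosen representatives certifies the connection required in the definition of $m^+$. I would handle this by the same argument as in Step 1. Any two regular points of $C_k$ are joined by an ideal-level path. That path is strictly $(\Rop(K)+\eta)$-admissible for every $\eta>0$, so \cref{cor:epsilon-genericity} and \cref{thm:strict-component-reconstruction} place all such points in one strict component at every level above $\Rop(K)$.

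A second, minor check is that concatenation in Step 2(d) is compatible with quotienting by $\Sim_u^+$. Endpoint representatives differing by an element of $\Sim_u^+$ are joined by a continuous path in $\Sim_u^+$, and such a path preserves ropelength. With these two points settled, the inequalities of Steps 1 and 2 combine to give $m^+=m$. No closed-level genericity beyond the right relaxation is needed.
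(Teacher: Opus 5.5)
Your proof is correct and is essentially the paper's argument. One inequality comes from \cref{thm:strict-component-reconstruction}, which turns a connection in the strict lifted graph into a strictly (hence closed) $\Lambda$-admissible isotopy; the other comes from applying \cref{cor:epsilon-genericity} at level $\Lambda+\varepsilon$ and letting $\varepsilon\downarrow0$. Your extra bookkeeping only makes explicit what the paper's terse proof takes for granted: moving the endpoints to regular points inside $C_i$ and $C_j$ (which \cref{cor:epsilon-genericity} does require), checking the constraint $\Lambda>\Rop(K)$, and re-deriving that each $\iota^{<}_{\Lambda,u}(C_k)$ lies in one component.
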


\begin{proof}
If the two ideal components are connected in the strict lifted graph at level
$\Lambda$, \cref{thm:strict-component-reconstruction} gives a strictly
$\Lambda$-admissible isotopy and therefore a closed
$\Lambda$-admissible isotopy.  Hence
\[
  m(C_i,C_j)\le m_u^{\operatorname{diag},+}(C_i,C_j).
\]
Conversely, suppose a closed $\lambda$-admissible isotopy joins the two
components.  For every $\varepsilon>0$,
\cref{cor:epsilon-genericity} replaces it by a projection-generic isotopy
strictly below $\lambda+\varepsilon$.  It therefore gives a path in
$\calG^{<,\operatorname{lift}}_{\lambda+\varepsilon,u}(K)$, so
\[
  m_u^{\operatorname{diag},+}(C_i,C_j)
  \le \lambda+\varepsilon.
\]
Taking first $\varepsilon\downarrow0$ and then the infimum over all such
$\lambda$ gives the reverse inequality.
\end{proof}

\begin{definition}[Right-relaxed diagrammatic merge tree]
Assume that the ideal stratum has finitely many admissible components and
that only finitely many mergers occur below the levels considered.  These
finiteness hypotheses are part of the finite merge-tree model and are not
asserted for all knot types.  The right-relaxed diagrammatic merge tree
$\MT_u^{\operatorname{diag},+}(K)$ is the merge tree, in the standard
persistence sense \cite{EdelsbrunnerHarer}, of
\[
  \Lambda\longmapsto
  \pi_0\bigl(\calG^{<,\operatorname{lift}}_{\Lambda,u}(K)\bigr),
  \qquad \Lambda>\Rop(K),
\]
with the ideal components inserted as right germs at
$\Lambda=\Rop(K)$.  By \cref{thm:agreement-merge-scales}, all branch merge
heights agree with the geometric merge scales defined by closed admissible
isotopies.
\end{definition}

\begin{remark}
The merge tree records only the \(H_0\)-level part of graph growth.  It forgets diagram birth scales, edge birth scales, graph distances, crossing profiles, and characteristic recognition patterns.  Thus it is a useful but coarse invariant of the ropelength-growth Reidemeister graph.
\end{remark}

\begin{figure}[t]
\centering
\begin{tikzpicture}[
  >=Latex,
  every node/.style={font=\scriptsize},
  comp/.style={circle, draw, fill=white, inner sep=1.6pt},
  born/.style={circle, draw, fill=black, inner sep=1.7pt},
  box/.style={draw, rounded corners, align=center, inner sep=4pt}
]
\draw[->] (-0.35,0) -- (-0.35,4.35) node[above] {$\Lambda$};
\node[left] at (-0.35,0.30) {$\Rop(K)$};
\node[left] at (-0.35,2.00) {$m_{12}$};
\node[left] at (-0.35,3.25) {$m_{123}$};
\draw[densely dotted] (-0.15,2.00) -- (6.95,2.00);
\draw[densely dotted] (-0.15,3.25) -- (6.95,3.25);

\draw[line width=0.7pt] (0.7,0.30) -- (0.7,2.00) -- (1.55,2.00) -- (1.55,3.25) -- (2.65,3.25) -- (2.65,3.95);
\draw[line width=0.7pt] (2.25,0.30) -- (2.25,2.00) -- (1.55,2.00);
\draw[line width=0.7pt] (3.85,0.30) -- (3.85,3.25) -- (2.65,3.25);
\node[born] at (0.7,0.30) {};
\node[born] at (2.25,0.30) {};
\node[born] at (3.85,0.30) {};
\node[below] at (0.7,0.17) {$C_1$};
\node[below] at (2.25,0.17) {$C_2$};
\node[below] at (3.85,0.17) {$C_3$};
\node[comp] (mergea) at (1.55,2.00) {};
\node[comp] (mergeb) at (2.65,3.25) {};
\node[above left] at (mergea) {merge};
\node[above left] at (mergeb) {merge};

\draw[dashed] (5.45,0.30) -- (5.45,3.05);
\node[right] at (5.55,1.35) {$L_{\operatorname{char},u}(K)$};
\node[box, minimum width=24mm] (vis) at (6.45,2.72) {finite certificate\\visible};
\node[box, minimum width=27mm] (fp) at (6.45,3.95) {finite witness scale\\$L_{\mathcal I,u}(K)<\infty$};
\draw[->] (5.45,2.72) -- (vis.west);
\draw[->] (vis.north) -- (fp.south);
\end{tikzpicture}
\caption{Diagrammatic merge scales and finite recognition.  The merge tree records the $H_0$-level shadow of graph growth from ideal components.  The finite recognition length records when a saturated BC-characteristic ball becomes visible; the finite principle asks, for specified natural classes, whether invariant or structural data have finite witness scales controlled by this recognition scale.}
\label{fig:merge-recognition-principle}
\end{figure}


\section{Examples and first-stage growth}
\label{sec:examples}

\subsection{The unknot}

For the unknot \(U\), the ideal level is completely understood.

\begin{proposition}[The ideal level of the unknot]
The unframed ideal stratum of the unknot consists of one point after
quotienting by all orientation-preserving Euclidean similarities.  The
projection-framed ideal stratum $I_u(U)$ is connected.  Hence the
projection-framed ideal diagrammatic merge tree of $U$ has one leaf and no
nontrivial merge.
\end{proposition}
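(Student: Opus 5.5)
The plan has three steps: identify the ropelength minimizers of the unknot, transfer this to the projection-framed quotient, and then read off the merge tree.

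\emph{Step 1: the minimizers.} For any closed curve $\gamma$ of thickness $\tau$, the curvature radius is at least $\tau$ wherever it is defined. Hence $|\gamma''|\le 1/\tau$ almost everywhere in arclength. Fenchel's theorem gives total curvature at least $2\pi$, so $\Len(\gamma)\ge 2\pi\tau$ and $\Rop(\gamma)\ge 2\pi$. A round circle has thickness equal to its radius, so it attains $\Rop=2\pi$. Thus $\Rop(U)=2\pi$.

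For rigidity, suppose $\Rop(\gamma)=2\pi$. Then equality holds in Fenchel's inequality, so $\gamma$ is a planar convex curve. Equality in the pointwise bound then forces curvature identically $1/\tau$ almost everywhere. A $C^{1,1}$ planar closed convex curve of constant curvature $1/\tau$ is a round circle of radius $\tau$. The one delicate point is to use the equality case of Fenchel in the $C^{1,1}$ (or rectifiable-tangent) setting. I would handle this by applying it to the Lipschitz tangent indicatrix, whose length is the total curvature.

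\emph{Step 2: the unframed quotient.} Round circles form a single orbit of the group of orientation-preserving similarities, since any circle maps to any other by translation, dilation and rotation. So the unframed ideal stratum is one point.

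\emph{Step 3: the projection-framed quotient.} In the framed quotient by $\Sim_u^+(\mathbb R^3)$, a round circle is determined, after removing translation and scale, by the unit normal $\pm n$ of its plane. Rotations about the $u$-axis act on normals, and the orbits are indexed by $|\langle n,u\rangle|\in[0,1]$. Hence $I_u(U)$ is (homeomorphic to) an interval, not a point, but it is still path-connected.

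For path-connectedness, any two circles are joined by rotating the plane through unit circles, with no translation or scaling. Each time slice is a round circle, so $\Rop\equiv 2\pi$. This rotation family is jointly smooth with $|\partial_s\Gamma|$ constant, so it is a $2\pi$-admissible isotopy in the sense of the definition. Its image in the framed quotient is a path, and $I_u(U)$ is a single admissible component.

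\emph{Conclusion.} With exactly one ideal admissible component, the merge tree $\MT_u^{\operatorname{diag},+}(U)$ has one leaf inserted at $\Lambda=2\pi$. There is no pair $C_i\ne C_j$, so there is no nontrivial merge; \cref{thm:agreement-merge-scales} is not even needed. The finiteness hypotheses of the merge-tree definition hold trivially.

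The main obstacle I expect is the rigidity statement in Step 1 at $C^{1,1}$ regularity. Once minimizers are known to be round circles, the rest is a direct orbit computation.
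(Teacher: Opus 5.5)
Your proposal is correct and follows essentially the paper's argument: you combine the pointwise bound $\kappa\le 1/\Thi$ with Fenchel's theorem and its equality case to identify round circles as the only minimizers, and then rotate the plane of the circle to connect everything in the framed quotient. Your orbit description of $I_u(U)$ by $|\langle n,u\rangle|\in[0,1]$, and your check that the rotation family is a $2\pi$-admissible isotopy, make explicit what the paper compresses into ``varying its spatial orientation gives a connected family.''
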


\begin{proof}
If \(\Thi(\gamma)=1\), then the curvature satisfies \(\kappa(s)\le1\) almost everywhere.  Hence, with respect to arclength,
\[
  \int_0^{\Len(\gamma)}\kappa(s)\,ds
  \le
  \int_0^{\Len(\gamma)}1\,ds
  =\Len(\gamma).
\]
Fenchel's theorem gives \(\int_0^{\Len(\gamma)}\kappa(s)\,ds\ge2\pi\) for every closed curve~\cite{Fenchel}.  Combining the two inequalities gives \(\Len(\gamma)\ge2\pi\).  Equality in the two inequalities forces \(\kappa=1\) almost everywhere and equality in Fenchel's theorem; the standard equality case gives a round planar circle of radius one.  Thus \(\Rop(U)=2\pi\), and every minimizer is a round unit circle up to Euclidean similarity.  Varying its spatial orientation gives a connected family in the projection-framed quotient, and every regular projection is the crossingless diagram.
\end{proof}

The merge tree of the unknot is trivial, but its ropelength-growth Reidemeister graph is not.  At \(\Lambda=2\pi\), the visible graph starts from the crossingless diagram.  As \(\Lambda\) increases, new unknot diagrams and new liftable Reidemeister transitions should appear.  For example, a thick geometric version of an \(R1\) move requires positive length beyond \(2\pi\), so the first nontrivial graph-growth event is expected to occur only after a positive ropelength gap.

This growth should be regarded as background local diagrammatic noise: local unknotting patterns can occur inside many knot diagrams and therefore do not by themselves identify a nontrivial knot type.  Finite characteristic patterns must detect knot-specific structure rather than such universal local behavior.

\begin{definition}[First Reidemeister birth scales for the unknot]
For \(i=1,2,3\), define
\[
\begin{aligned}
  \Lambda_{R_i,u}(U)
  =
  \inf\bigl\{
    \Lambda\ge 2\pi
    \,\bigm|\,
    & \text{a liftable } R_i\text{-transition appears in }
      \calH_{\Lambda,u}(U) \\
    & \text{but not in }
      \calH_{2\pi,u}(U)
  \bigr\}.
\end{aligned}
\]
\end{definition}

\begin{problem}
Estimate or compute \(\Lambda_{R1,u}(U)\), \(\Lambda_{R2,u}(U)\), and \(\Lambda_{R3,u}(U)\).  More generally, study the growth of \(\rad_{\Lambda,u}(U)\) from the crossingless diagram.
\end{problem}

\subsection{Separated systems on thickness-one unknots}
\label{subsec:unknot-cubes}

\Cref{thm:separated-cubes} gives a concrete route to visible patterns for
the unknot at explicit levels.  The round circle is the only thickness-one
unknot of length $2\pi$ and leaves no room for separated supports, but
thickness-one unknots exist at every length $L\ge2\pi$: the stadium curve
consisting of two unit semicircles joined by two parallel segments of
length $\ell$ has length $2\pi+2\ell$ and thickness one.  Along the
straight segments there is room for local moves with pairwise separated
supports, provided each local movie stays on the outward side of its
segment, away from the opposite strand.  Granting a single $R1$-curl movie
through curves of thickness at least one, based on a straight strand, with
support ball of radius $r$ and maximal length increment $\delta$ --- the
existence of such a movie at some finite $(r,\delta)$ is expected, but it
is not constructed here --- one obtains, for every $k$, a separated system
of $k$ local moves on a stadium curve with $\ell$ of order $k(r+1)$, and
hence visible transitions below level roughly $2\pi+2k(2r+2)+k\delta$.
Whenever the $2^{k}$ composite diagrams are pairwise distinct, for instance
by mixing local moves with pairwise distinguishable effects
(\cref{rem:cube-collapse}), the full typed $k$-cube is vertex-coherently
lifted at that level.  A linear investment of ropelength buys an
exponentially large coherently lifted pattern, in contrast with the
universal bound $A(n)$ of \cref{thm:effective-visibility}.

\begin{problem}[Explicit thick curl movies]
Construct an explicit $R1$-curl movie through curves of thickness at least
one, with explicit support radius and length increment, and deduce explicit
coherently lifted cube patterns for the unknot.
\end{problem}

\subsection{The trefoil as a first nontrivial test case}

The trefoil \(3_1\) is the first natural test case for quantitative finite recognition.  A standard minimal diagram \(D_{\min}\) has three crossings, and known ropelength estimates place the ideal scale in the range
\[
  31.32 < \Rop(3_1) < 32.74317,
\]
using quadrisecant lower bounds and explicit numerical upper bounds \cite{DenneDiaoSullivan,BaranskaPieranskiStasiak}.  Thus the first nontrivial graph-growth events for a near-ideal trefoil occur on a scale already comparable with the tight trefoil itself.

Let \(D_{\min}\) be a three-crossing minimal diagram obtained from a near-ideal representative by a generic projection direction \(u\).  We write
\(D_{\min}\xrightarrow{R_1}_{\Lambda,u}\ast\) if
\(\calH_{\Lambda,u}(3_1)\) contains a liftable
\(R_1\)-transition based at \(D_{\min}\), and define
\[
  \Lambda_{R_1,u}(3_1;D_{\min})
  =
  \inf\bigl\{
    \Lambda\ge \Rop(3_1)
    \mid
    D_{\min}\xrightarrow{R_1}_{\Lambda,u}\ast
  \bigr\}.
\]
\begin{sloppypar}
This number is not presently known.  A separated local-curl model suggests the heuristic scale \(\Rop(3_1)+2\pi\), since a visible local curl in a thickness-one curve must carry substantial turning and hence substantial length.  However, such a model is a controlled subclass of liftable \(R_1\)-events and does not give a lower bound for the unrestricted first visibility scale.  Non-separated events may interact with nearby strands and with the global reach of the curve, so the total length need not decompose into ``trefoil core length plus kink length.''
\end{sloppypar}

\begin{problem}[First visible trefoil growth]
\sloppy
Estimate or compute \(\Lambda_{R_1,u}(3_1;D_{\min})\), together with the analogous first \(R_2\)- and \(R_3\)-visibility scales.  In particular, determine whether the first visible \(R_1\)-event for a near-ideal trefoil can occur below the separated local-curl scale \(\Rop(3_1)+2\pi\).
\end{problem}

\subsection{Nontrivial knots}

For nontrivial knots, exact computation of the lifted ropelength-growth graph appears difficult.  It requires information about ropelength-critical representatives, admissible deformations under a length bound, projection discriminants, and liftable Reidemeister transitions.

Nevertheless, the finite-recognition viewpoint gives a concrete target.  For a given knot \(K\), one seeks a finite BC-characteristic certificate \(Q\subset G_S(K)\) and asks at what ropelength level it becomes visible in \(\calH_{\Lambda,u}(K)\).  This is often more realistic than trying to compute the whole filtered graph.

\begin{problem}
\sloppy
For the trefoil knot, identify a finite BC-characteristic Reidemeister-ball certificate and estimate its visibility length in the ropelength-filtered diagram-image graph.
\end{problem}

\begin{problem}
Compare \(L_{\operatorname{char},u}(K)\) with classical invariants such as crossing number, bridge number, ropelength, and diagram complexity in the sense of Barbensi--Celoria.
\end{problem}



\section{Further directions}
\label{sec:further}

The unconditional part of the paper consists of finite local recognition,
finite visibility, vertex-coherent visibility for face-consistently
labeled patterns (including all tree-shaped patterns and all separated
cube systems), strict-sublevel component reconstruction, and exact
recovery of geometric merge scales through the right-relaxed lifted graph.
The remaining questions concern quantitative bounds, the planar-class
monodromy of characteristic certificates, and the possible extra point-set
information present exactly at closed critical levels.

\begin{conjecture}[Finite witness principle for standard structures]
\label{conj:finite-principle}
For a specified natural class $\mathfrak I$ of geometric or topological knot
structures, there should be finite decorated certificates in
$\calH_{\Lambda,u}(K)$ from which each member of $\mathfrak I$ can be
recovered by a class-specific finite rule.  In favorable cases the required
scale should be bounded in terms of $L_{\operatorname{char},u}(K)$.
\end{conjecture}

This is a program for explicitly specified structures, not a claim about
arbitrary set-theoretically defined invariants.

\begin{problem}[Exact closed-level components]
Determine natural hypotheses under which the exact closed-level admissible
component relation in $\calX_{\Lambda,u}(K)$ agrees with the right-relaxed
relation $\sim_{\Lambda+}$.  A discrepancy could occur only at a critical
ropelength level and does not change the merge-scale invariant.
\end{problem}

\begin{problem}[Face-consistent characteristic certificates]
Determine whether every knot type admits a BC-characteristic certificate
with a face-consistent planar labeling, for instance a tree-shaped one.
This is a finite combinatorial question about $G_S(K)$.  A positive answer
makes the coherent recognition length
$L_{\operatorname{char},u}^{\operatorname{coh}}(K)$ finite for all knot
types; by \cref{thm:finite-recognizability}, ordinary finite
recognizability does not depend on it.
\end{problem}

\begin{problem}[Quantitative coherent visibility]
When the coherent recognition length
$L_{\operatorname{char},u}^{\operatorname{coh}}(K)$ is finite, estimate its
gap with $L_{\operatorname{char},u}(K)$.  The proof of
\cref{thm:coherent-finite-lift} gives finiteness but no useful bound;
\cref{thm:separated-cubes} gives explicit levels for cube patterns.
\end{problem}

\begin{problem}[Quantitative finite recognition]
\sloppy
Compute or estimate $L_{\operatorname{char},u}(K)$ for knots with few
crossings, by exhibiting a concrete BC-characteristic ball and explicit
ropelength bounds
for all of its vertices and Reidemeister edges.
\end{problem}

\appendix
\section{A complete proof of relative parametric smoothing}
\label{app:parametric-smoothing-proof}

This appendix proves \cref{lem:parametric-smoothing} without invoking a
parametric extension of a smoothing theorem for arbitrary positive-reach
manifolds.  The argument is special to closed curves.  Its two ingredients
are normalized arclength convolution and the thickness formula
\[
 \Thi(\gamma)
 =\min\left\{\operatorname{MinRad}(\gamma),
              \frac12\operatorname{dcsd}(\gamma)\right\},
\]
where $\operatorname{MinRad}$ is the reciprocal of the essential supremum
of curvature and $\operatorname{dcsd}$ is the doubly-critical
self-distance; see
\cite{LitherlandSimonDurumericRawdon,CantarellaKusnerSullivan}.

\begin{proof}[Proof of \cref{lem:parametric-smoothing}]
Write
\[
 r=r_\Gamma=\inf_{t\in[0,1]}\Thi(\Gamma_t)>0
\]
and fix $0<\delta<r$.  We shall construct a sequence of relative smoothings
whose lengths converge uniformly to the original lengths and whose
thicknesses are eventually at least $r-\delta$.

\smallskip
\noindent\emph{Step 1: normalized arclength coordinates.}
Choose an orientation and a base point on $S^1$.  Put
\[
 L(t)=\Len(\Gamma_t).
\]
The function $L$ is positive and continuous, since $\partial_s\Gamma$ is
continuous and bounded away from zero.  For each $t$, define
\[
 a_t(s)=\frac{1}{L(t)}\int_0^s|\partial_u\Gamma(u,t)|\,du
 \quad\text{modulo }1.
\]
The maps $a_t:S^1\to S^1$ are orientation-preserving homeomorphisms, and
their inverses $b_t$ depend continuously on $(\theta,t)$.  Set
\[
 \gamma(\theta,t)=\Gamma(b_t(\theta),t).
\]
Then $\gamma$ has the same time slices as unparametrized curves and
\[
 |\partial_\theta\gamma(\theta,t)|=L(t).
 \tag{A.1}\label{eq:app-unit-speed}
\]
Moreover, $\gamma$ and $\partial_\theta\gamma$ are continuous on the
parameter cylinder.  Indeed,
\[
 \partial_\theta\gamma(\theta,t)
 =L(t)\,
 \frac{\partial_s\Gamma(b_t(\theta),t)}
      {|\partial_s\Gamma(b_t(\theta),t)|}.
\]
The family remains constant on the endpoint collars.

Since $\Thi(\Gamma_t)\ge r$, the a.e. curvature of every slice is at most
$r^{-1}$.  In the normalized arclength parameter this gives
\[
 |\partial_{\theta\theta}\gamma(\theta,t)|
 \le \frac{L(t)^2}{r}
 \quad\text{for a.e. }\theta.
 \tag{A.2}\label{eq:app-curvature-original}
\]
Equivalently, $\partial_\theta\gamma(\cdot,t)$ is Lipschitz with that
constant.  Let
\[
 L_- =\min_t L(t)>0,
 \qquad
 L_+ =\max_t L(t)<\infty.
\]

\smallskip
\noindent\emph{Step 2: relative variable-radius mollification.}
Extend $\gamma$ periodically in $\theta$ and, using the constant endpoint
collars, constantly a little beyond the interval $[0,1]$ in the time
variable.  Let $\rho\in C_c^\infty(\mathbb R^2)$ be nonnegative, supported
in the unit ball, and satisfy $\int\rho=1$.

Choose an even mollifier $\rho$ and smooth functions
$\lambda_j:[0,1]\to[0,\infty)$ such that
\begin{enumerate}[label=(\roman*)]
\item $\lambda_j=0$ on fixed smaller endpoint collars;
\item $\lambda_j>0$ on the complement of those collars;
\item $\|\lambda_j\|_\infty\to0$;
\item near either boundary point $t_0$ of a fixed collar,
      $\lambda_j(t)=c_j(t-t_0)^2$ on the non-collar side;
\item all points sampled below remain in the chosen extension of the
      parameter cylinder.
\end{enumerate}
Define
\[
 \gamma_j(\theta,t)
 =\int_{\mathbb R^2}\rho(x,y)\,
   \gamma\bigl(\theta-\lambda_j(t)x,
               t-\lambda_j(t)y\bigr)\,dx\,dy.
 \tag{A.3}\label{eq:app-mollifier}
\]
When $\lambda_j(t)=0$, the right-hand side is exactly $\gamma(\theta,t)$.
Thus the endpoint collars are fixed.  Where $\lambda_j(t)>0$, changing
variables in the integral writes \eqref{eq:app-mollifier} as integration against a
smooth compactly supported kernel depending smoothly on $(\theta,t)$;
hence $\gamma_j$ is smooth there.

Since the scale depends only on $t$, differentiation in the $\theta$
direction commutes with the smoothing operator.  In the classical or
distributional sense,
\[
 \partial_\theta\gamma_j
 =S_{\lambda_j}(\partial_\theta\gamma),
 \qquad
 \partial_{\theta\theta}\gamma_j
 =S_{\lambda_j}(\partial_{\theta\theta}\gamma),
 \tag{A.4}\label{eq:app-derivatives}
\]
where $S_{\lambda_j}$ denotes the averaging operator in \eqref{eq:app-mollifier}.
Uniform continuity of $\gamma$ and $\partial_\theta\gamma$ gives
\[
 \|\gamma_j-\gamma\|_{C^0}
 +\|\partial_\theta\gamma_j-
       \partial_\theta\gamma\|_{C^0}
 \longrightarrow0.
 \tag{A.5}\label{eq:app-c1-convergence}
\]
This is the asserted uniform slice-wise $C^1$ approximation.

For completeness, the quadratic vanishing in (iv) gives the regularity at
the seams of the fixed collars.  There the family is constant in $t$, so
\eqref{eq:app-mollifier} reduces to spatial mollification of a fixed
$C^{1,1}$ curve at scale $h=\lambda_j(t)$.  Evenness of the mollifier and
the Lipschitz bound on the first derivative give the standard estimates
\[
 \|S_hf-f\|_{C^0}=O(h^2),
 \qquad
 \|\partial_\theta S_hf-\partial_\theta f\|_{C^0}=O(h),
\]
\[
 \|\partial_hS_hf\|_{C^0}=O(h),
 \qquad
 \|\partial_h\partial_\theta S_hf\|_{C^0}=O(1).
\]
With $h=c_j(t-t_0)^2$, these estimates show that $\gamma_j$ and its first
$\theta$- and $t$-derivatives extend Lipschitz-continuously across the seam.
Thus $\gamma_j$ is piecewise jointly $C^{1,1}$ on the closed parameter
cylinder and smooth off the fixed collars.

\smallskip
\noindent\emph{Step 3: uniform curvature-radius control.}
Let
\[
 \omega_L(h)=
 \sup\{|L(t)-L(t')|:|t-t'|\le h\}
\]
be the modulus of continuity of $L$, and put
\[
 e_j=\|\partial_\theta\gamma_j-
          \partial_\theta\gamma\|_{C^0}.
\]
By \eqref{eq:app-curvature-original}--\eqref{eq:app-derivatives},
\[
 |\partial_{\theta\theta}\gamma_j(\theta,t)|
 \le
 \frac{\bigl(L(t)+\omega_L(\|\lambda_j\|_\infty)\bigr)^2}{r}.
 \tag{A.6}\label{eq:app-second-derivative}
\]
On the other hand, \eqref{eq:app-unit-speed} and \eqref{eq:app-c1-convergence} give
\[
 |\partial_\theta\gamma_j(\theta,t)|
 \ge L(t)-e_j.
 \tag{A.7}\label{eq:app-speed-lower}
\]
For a regular parametrized curve, curvature is at most
$|\gamma''|/|\gamma'|^2$.  Therefore
\[
 \kappa_{\gamma_{j,t}}(\theta)
 \le
 \frac{\bigl(L(t)+\omega_L(\|\lambda_j\|_\infty)\bigr)^2}
      {r\bigl(L(t)-e_j\bigr)^2}.
 \tag{A.8}\label{eq:app-curvature-bound}
\]
The right-hand side converges uniformly to $r^{-1}$.  Consequently, for
all sufficiently large $j$,
\[
 \operatorname{MinRad}(\gamma_{j,t})\ge r-\delta
 \qquad\text{for every }t.
 \tag{A.9}\label{eq:app-minrad}
\]
In particular, the speeds are uniformly bounded below and the first
$\theta$-derivatives are uniformly Lipschitz.

\smallskip
\noindent\emph{Step 4: embeddedness for all large $j$.}
We first show that every slice $\gamma_{j,t}$ is embedded when $j$ is
large.  Otherwise there are $j_k\to\infty$, times $t_k$, and distinct
parameters $\theta_k,\phi_k$ with
\[
 \gamma_{j_k}(\theta_k,t_k)
 =\gamma_{j_k}(\phi_k,t_k).
\]
After passing to a subsequence, all three parameters converge.  If the two
limiting circle parameters are distinct, \eqref{eq:app-c1-convergence} gives a
self-intersection of the limiting original slice, a contradiction.

If the two limiting parameters coincide, let $h_k$ be the signed shortest
circle difference from $\theta_k$ to $\phi_k$.  Then $h_k\to0$.  Uniform
lower speed and the uniform Lipschitz bound for
$\partial_\theta\gamma_{j_k}$ give
\[
 \begin{aligned}
 \bigl|\gamma_{j_k}(\theta_k+h_k,t_k)
       -\gamma_{j_k}(\theta_k,t_k)\bigr|
 &\ge |h_k|\,
       |\partial_\theta\gamma_{j_k}(\theta_k,t_k)|
       -C h_k^2\\
 &>0
 \end{aligned}
\]
for all large $k$, again a contradiction.

\smallskip
\noindent\emph{Step 5: uniform doubly-critical-distance control.}
We claim that, for all sufficiently large $j$,
\[
 \operatorname{dcsd}(\gamma_{j,t})
 \ge 2(r-\delta)
 \qquad\text{for every }t.
 \tag{A.10}\label{eq:app-dcsd}
\]
Suppose not.  Passing to a subsequence, choose times $t_j$ and doubly
critical pairs $(\theta_j,\phi_j)$ such that the chord
\[
 c_j=\gamma_j(\phi_j,t_j)-\gamma_j(\theta_j,t_j)
\]
satisfies
\[
 |c_j|<2(r-\delta),
 \qquad
 c_j\perp\partial_\theta\gamma_j(\theta_j,t_j),
 \qquad
 c_j\perp\partial_\theta\gamma_j(\phi_j,t_j).
 \tag{A.11}\label{eq:app-critical-pair}
\]
After taking a subsequence, $t_j\to t$, $\theta_j\to\theta$, and
$\phi_j\to\phi$.

If $\theta\ne\phi$, uniform $C^1$ convergence passes the two
orthogonality relations to the limit.  Thus $(\theta,\phi)$ is a doubly
critical pair of the original curve $\gamma_t$, and
\[
 |\gamma(\phi,t)-\gamma(\theta,t)|
 \le2(r-\delta)<2r,
\]
contradicting $\Thi(\gamma_t)\ge r$ and the thickness formula.

It remains to exclude $\theta=\phi$.  Let $h_j$ again denote the signed
shortest parameter difference.  With uniform constants $m>0$ and $C<\infty$
such that
\[
 |\partial_\theta\gamma_j|\ge m,
 \qquad
 \operatorname{Lip}_\theta(\partial_\theta\gamma_j)\le C,
\]
we have
\[
 \begin{aligned}
 c_j\cdot\partial_\theta\gamma_j(\theta_j,t_j)
 &=h_j|\partial_\theta\gamma_j(\theta_j,t_j)|^2+E_j,\\
 |E_j|&\le \frac12 C
 |\partial_\theta\gamma_j(\theta_j,t_j)|h_j^2.
 \end{aligned}
 \tag{A.12}\label{eq:app-near-diagonal}
\]
For $0<|h_j|$ sufficiently small, the first line of \eqref{eq:app-near-diagonal} is
nonzero, contradicting the first orthogonality relation in \eqref{eq:app-critical-pair}.
This proves \eqref{eq:app-dcsd}.

Combining \eqref{eq:app-minrad}, \eqref{eq:app-dcsd}, and the thickness formula yields
\[
 \Thi(\gamma_{j,t})\ge r-\delta
 \qquad\text{for every }t
\]
for all sufficiently large $j$.

\smallskip
\noindent\emph{Step 6: length control and conclusion.}
By \eqref{eq:app-unit-speed} and \eqref{eq:app-c1-convergence},
\[
 \begin{aligned}
 \bigl|\Len(\gamma_{j,t})-L(t)\bigr|
 &\le\int_{S^1}
 \bigl|\,|\partial_\theta\gamma_j|-L(t)\bigr|\,d\theta\\
 &\le e_j,
 \end{aligned}
\]
uniformly in $t$.  Choose a sufficiently large $j$ so that $e_j<\epsilon$
and the thickness estimate above holds.  The family has embedded time
slices, is fixed on the endpoint collars, and is smooth on their complement.
It therefore gives the required piecewise jointly smooth, hence admissible,
isotopy.  Taking $\Gamma^{\mathrm{sm}}=\gamma_j$ completes the proof.
\end{proof}

\section*{Acknowledgements}

\paragraph{Use of generative AI.}
The author used ChatGPT (OpenAI) as an interactive aid during the
preparation of this manuscript, including for mathematical discussion,
the exploration of possible formulations and proof strategies, and the
improvement of exposition. The author independently reviewed and
verified all mathematical statements, proofs, computations, and
references used in the manuscript and takes full responsibility for
its accuracy, originality, and content.


\end{document}